\newtheorem{thm}{Theorem}[section]
\newtheorem{lem}[thm]{Lemma}
\newtheorem{prop}[thm]{Proposition}
\newtheorem{cor}[thm]{Corollary}
\newtheorem{hypo}[thm]{Hypothesis}
\theoremstyle{remark}
\newtheorem*{rem}{Remark}
\theoremstyle{definition}
\newtheorem{defi}[thm]{Definition}
\def\F{\mathcal{F}}
\def\B{\mathcal{B}}
\def\G{\Gamma}
\def\la{\langle}
\def\ra{\rangle}
\def\Z{\mathbb{Z}}
\DeclareMathOperator{\AGL}{AGL}
\DeclareMathOperator{\aut}{Aut}
\DeclareMathOperator{\core}{core}
\DeclareMathOperator{\GL}{GL}
\DeclareMathOperator{\GaL}{\Gamma L}
\DeclareMathOperator{\nest}{\mathcal{N}}
\DeclareMathOperator{\orb}{Orb}
\DeclareMathOperator{\PG}{PG} 
\DeclareMathOperator{\PGL}{PGL}
\DeclareMathOperator{\PGaL}{P\Gamma L}
\DeclareMathOperator{\PSL}{PSL}
\DeclareMathOperator{\SiL}{\Sigma L}
\DeclareMathOperator{\soc}{soc}
\DeclareMathOperator{\sym}{Sym}
\begin{document}
\title[Edge-transitive core-free Nest graphs]
{Edge-transitive core-free Nest graphs}
\author[I.~Kov\'acs]{Istv\'an~Kov\'acs}
\address{I.~Kov\'acs 
\newline\indent
UP IAM, University of Primorska, Muzejski trg 2, SI-6000 Koper, Slovenia 
\newline\indent
UP FAMNIT, University of Primorska, Glagol\v jaska ulica 8, SI-6000 Koper, Slovenia}
\email{istvan.kovacs@upr.si}
\thanks{Partially supported by the Slovenian Research Agency (research program P1-0285, research projects N1-0062, J1-9108, J1-1695, J1-2451, N1-0140 and N1-0208).}
\keywords{bicriculant, edge-transitive, primitive permutation group}
\subjclass[2010]{05C25, 20B25}
\maketitle
\begin{abstract}
A finite simple graph $\G$ is called a Nest graph if it is regular of valency $6$ and admits an automorphism $\rho$ with two orbits of the same length such that at least one of the subgraphs induced by these orbits is a cycle. 
We say that $\G$ is core-free if no non-trivial subgroup of the group generated by 
$\rho$ is normal in $\aut(\G)$. In this paper we show that, if $\G$ is 
edge-transitive and core-free, then it is isomorphic to 
one of the following graphs: the complement of the Petersen graph, the Hamming graph 
$H(2,4)$, the Shrikhande graph and a certain normal $2$-cover of $K_{3,3}$ by 
$\Z_2^4$.
\end{abstract}
\section{Introduction}\label{sec:intro}
All groups in this paper will be finite and all graphs will be finite and simple.
A graph admitting an automorphism with two orbits of the same length is called 
a {\em bicirculant}. Symmetry properties of bicirculants have attracted 
considerable attention (see, e.g., \cite{AHK,CZFZ,DGJ,KKMW,MMSF,MP,P,ZZ}). 
Following~\cite{KR}, for an integer $d \ge 3$, 
we denote by $\F(d)$ the family of regular graphs having
valency $d$ and admitting an automorphism with two orbits of the same length such that at least one of the subgraphs induces by these orbits is a cycle.  
Jajcay et al.~\cite{JMSV} initiated the investigation of the edge-transitive graphs 
in the classes $\F(d)$, $d \ge 6$. 
The families $\F(d)$ with $3 \le d \le 5$ were studied under different names.   
The graphs in $\F(3)$  
were introduced by Watkins~\cite{Wa} under the 
name {\em generalised Petersen graphs}, the graphs in $\F(4)$ by 
Wilson~\cite{Wi} under the name {\em Rose Window graphs}, and the 
graphs in $\F(5)$ by Arroyo et al.~\cite{AHKOS} under the name  
{\em Taba\v{c}jn graphs}. The automorphism groups of these graphs form the subject 
of the papers \cite{FGW,KKM,DKM,KMMS}, and the question which of them are edge-transitive has been answered in \cite{FGW,KKM,AHKOS}. 

Jajcay et al.~\cite{JMSV} asked whether there exist edge-transitive 
graphs in $\F(d)$ for $d \ge 6$. Following \cite{V}, they call the graphs in 
$\F(6)$ {\em Nest graphs}. Several infinite families of edge-transitive Nest graphs 
were exhibited, which turn out to have interesting properties 
(e.g., half-arc-transitivity). However, no edge-transitive graph of valency larger than
$6$ was found. Recently, it was proved by the author and Ruff~\cite{KR} that 
the complement of the Petersen graph is the only edge-transitive graph in 
$\F(d)$ with $d \ge 6$, which has twice an odd number of vertices.  
The main result of \cite{JMSV} is the classification of 
the edge-transitive Nest graphs of girth $3$ (see \cite[Theorem~8]{JMSV}), and  
the task to classify all edge-transitive Nest graphs was posed as  \cite[Problem~2]{JMSV}. In what follows, the Nest graphs will be described via their representation due to \cite[Construction~3]{JMSV}, which goes as follows. 
Let $n \ge 4$ and let $a, b, c, k \in \Z_n$ such that each of them is distinct 
from $0$ (the zero element of $\Z_n$), the elements $a, b$ and $c$ are pairwise distinct, and in the case when $n$ is even, $k \ne n/2$.  
Then the {\em Nest graph} $\nest(n;a,b,c;k)$ is defined to have vertex set    
$\{ u_i : i \in \Z_n\} \cup \{ v_i : i \in \Z_n\}$, and three types of edges such as
\begin{itemize}
\setlength{\itemsep}{0.25\baselineskip}
\item $\{u_i,u_{i+1}\}$ for $i \in \Z_n$  ({\em rim edges}),
\item $\{v_i,v_{i+k}\}$ for $i \in \Z_n$   ({\em hub edges}),
\item $\{u_i,v_i\}$,  $\{u_i,v_{i+a}\}$, $\{u_i,v_{i+b}\}$ and 
$\{u_i,v_{i+c}\}$ for $i \in \Z_n$   ({\em spoke edges}),
\end{itemize}
where the sums in the subscripts are computed in $\Z_n$. 
It is easy to see that the permutation $\rho$ of $V(\G)$, defined as 
$\rho=(u_0,u_1,\ldots,u_{n-1})(v_0,v_1,\ldots,v_{n-1})$, 
is an automorphism of $\G$ with orbits $\{u_i : i \in \Z_n\}$ and 
$\{v_i : i \in \Z_n\}$, and the subgraph induced 
by the former orbit is a cycle. It is not hard to show that all the graphs 
$\nest(n;a,b,c;k)$ comprise the whole family $\F(6)$. 
\medskip

In the case of both the Rose Window and the 
Taba\v{c}jn graphs, the classification of the edge-transitive graphs was obtained in two 
main steps. The so called core-free graphs were found first and the  
rest was retrieved from the core-free graphs using 
covering techniques (see \cite{KKM,AHKOS}).  
Here is the formal definition of a core-free Nest graph.

\begin{defi}\label{core-free}
Let $\G=\nest(n;a,b,c;k)$ be a Nest graph and $\rho$ be the permutation of $V(\G)$ defined as  $\rho=(u_0,u_1,\ldots,u_{n-1})(v_0,v_1,\ldots,v_{n-1})$. 
Then $\G$ is {\em core-free} if no non-trivial 
subgroup of $\la \rho \ra$ (the group generated by $\rho$) is normal in 
$\aut(\G)$. 
\end{defi}

\begin{rem}
The term ``core-free'' comes from group theory. 
For a subgroup $A \le B$, the {\em core} of $A$ in 
$B$ is the largest normal subgroup of $B$ contained in $A$. In the case when $A$ 
has trivial core, it is also called {\em core-free}.  In this context, Definition~\ref{core-free}  can be rephrased by saying that $\G$ is core-free if and only if $\la \rho \ra$ is core-free 
in $\aut(\G)$.   
\end{rem}

Our goal in this paper is to determine the edge-transitive core-free Nest graphs. 
For an explanation why this task is a more subtle 
than in the case of Rose Window and Taba\v{c}jn graphs, we refer to~\cite[p.~9]{JMSV}. The edge-transitive non-core-free Nest graphs are handled in the paper~\cite{K22}. 
\medskip

The main result of this paper is the following theorem.

\begin{thm}\label{main}
If $\nest(n;a,b,c;k)$ is an edge-transitive core-free graph, then it is isomorphic to 
one of the following graphs: 
$$
\nest(5;1,2,3;2),~\nest(8;1,3,4;3),~\nest(8;1,2,5;3)~\text{and}~\nest(12;2,4,8;5).
$$
\end{thm}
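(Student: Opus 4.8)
The plan is to first reduce to a vertex-transitive action and then exploit the core-free hypothesis to force the automorphism group onto a short list via a primitive-group classification. Write $G=\aut(\G)$ and $R=\la\rho\ra$, a semiregular cyclic group of order $n$ with orbits $U=\{u_i:i\in\Z_n\}$ and $V=\{v_i:i\in\Z_n\}$. The first observation is that \emph{an edge-transitive Nest graph is vertex-transitive}: the rim edge $\{u_0,u_1\}$ has both ends in $U$, while the spoke edge $\{u_0,v_0\}$ has one end in each orbit, so any $g\in G$ carrying the former to the latter must send some vertex of $U$ into $V$. Since $R$ is already transitive on each of $U$ and $V$, this makes $G$ transitive on $\Omega=U\cup V$, and the same $g$ shows that the partition $\{U,V\}$ is not $G$-invariant; in particular $R\not\trianglelefteq G$.

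Next I would split on the parity of $n$. When $n$ is odd the vertex number $2n$ is twice an odd number, and the theorem of Kov\'acs--Ruff cited in the introduction immediately yields the complement of the Petersen graph, i.e.\ $\nest(5;1,2,3;2)$. So the real content is the even case, where the core-free hypothesis enters through its contrapositive: for every prime $p\mid n$ the unique subgroup of order $p$ in $R$ is non-normal in $G$, and in particular the central involution $\rho^{n/2}$ generates a non-normal subgroup. Thus core-freeness forbids exactly the $G$-block systems that $R$ would otherwise manufacture from its own subgroups, which is the mechanism that excludes the ``obvious'' imprimitive structures.

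The core of the argument is an analysis of the $G$-invariant partitions of $\Omega$. I would take a maximal block system $\B$, so that $\bar G:=G/K$ acts primitively on $\B$ (with kernel $K$), and follow the image $\bar R$ of $R$. Since $R$ is transitive on each of $U$ and $V$, a short argument shows $\bar R$ has at most two orbits on $\B$. If $\bar R$ is transitive then, being a cyclic group acting faithfully and transitively on $\B$, it is regular, so $\bar G$ is a primitive group containing a cyclic regular subgroup; the Burnside--Schur--Jones classification then pins $\bar G$ down to an affine group of prime degree, a group between $A_m$ and $S_m$, a $\PGL$--type group on projective points, or one of the sporadic cases on $11$ or $23$ points. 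If instead $\bar R$ has two orbits on $\B$, one argues as in the odd case. In either situation, combining the possibilities for $\bar G$ with the valency $6$ and the local action of a vertex stabiliser on its six neighbours (so that $\G$ is arc- or half-arc-transitive) bounds $|G|$ and hence $n$.

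Finally, for each surviving quotient I would reconstruct the admissible kernels $K$ --- the covering data --- and retain only those yielding a genuine Nest graph that is edge-transitive and leaves $R$ core-free; a direct check then identifies the survivors as $\nest(8;1,3,4;3)$ and $\nest(8;1,2,5;3)$, the Hamming graph $H(2,4)$ and the Shrikhande graph, together with $\nest(12;2,4,8;5)$, the normal $\Z_2^4$-cover of $K_{3,3}$. I expect the principal obstacle to be precisely this covering analysis in the even case: two of the four target graphs (the Shrikhande graph and the $K_{3,3}$-cover) have imprimitive automorphism groups, so the reduction to a primitive quotient cannot by itself finish the classification, and one must control the kernel $K$, ruling out infinite families of covers, while keeping the cyclic group $R$ core-free throughout.
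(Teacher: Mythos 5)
Your outline shares the paper's general philosophy (block systems, primitive quotients, M\"uller/Burnside--Schur type classifications, then a covering analysis), and two of your observations are sound and even attractive: the elementary argument that an edge-transitive Nest graph is vertex-transitive with $\la\rho\ra$ non-normal, and the disposal of all odd $n$ at once by citing the Kov\'acs--Ruff twice-odd-order result. However, the proposal has a genuine gap precisely where you yourself locate ``the principal obstacle'': the reconstruction of $\G$ from a primitive quotient is not an afterthought but is the entire content of the proof, and you give no mechanism for carrying it out. Concretely, your claim that identifying the possibilities for $\bar G$ ``bounds $|G|$ and hence $n$'' does not hold as stated: the primitive quotient ranges over infinite families and the kernel $K$ is a priori unbounded, so nothing here bounds $|G|$ absolutely. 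The tool that actually converts structural information into a bound on $n$ is Lucchini's theorem, that a core-free proper cyclic subgroup $C$ of $G$ satisfies $|C|^2<|G|$; the paper repeatedly establishes linear bounds of the form $|G|\le c\cdot n$ (by showing $K$ is faithful on a block, or that $\G/\B$ is a cycle) and then invokes Lucchini to force $n<c$. You never mention this theorem, and without it the logic of your concluding step is inverted. Likewise, dispatching the case where $\bar R$ has two orbits on $\B$ with ``one argues as in the odd case'' is not an argument: the odd case was settled by an external classification of graphs of twice-odd \emph{order}, which has no analogue for a quotient action.

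By contrast, the paper works bottom-up rather than top-down: it first shows (using the arc-transitive circulant classification and core-freeness) that any non-trivial block system must be non-cyclic, then analyses non-cyclic block systems with blocks of size $2$ and with blocks of size at least $4$ (applying M\"uller's theorem to the action of a block stabiliser \emph{on a block}, not to the top quotient), and funnels everything into the existence of a normal non-cyclic block system with blocks of size exactly $4$; the final contradiction for $n>50$ again comes from the circulant classification plus Lucchini, and $n\le 50$ is a machine check. If you want to pursue your top-down route, you would still need all of this kernel control --- in particular a substitute for the paper's Lemmas on size-$2$ and size-$\ge 4$ blocks --- to rule out the infinite families of covers you correctly identify as the danger (the Shrikhande graph and the $K_{3,3}$-cover really do have imprimitive automorphism groups, so no primitive-quotient statement alone can finish). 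As it stands the proposal is a plausible strategy sketch, not a proof.
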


\begin{rem}
The fact that each of the Nest graphs in Theorem~\ref{main} 
is core-free was mentioned by Jajcay et al., see \cite[p.~9]{JMSV}.
The first three of them are well-known strongly regular graphs. The Nest graph 
$\nest(5;1,2,3;2)$ is the complement of the {\em Petersen graph}, 
$\nest(8;1,3,4;3)$ is the {\em Hamming graph} $H(2,4)$, and 
$\nest(8;1,2,5;3)$ is the {\em Shrikhande graph}.  
The fourth Nest graph $\nest(12;2,4,8;5)$ is not strongly-regular, it can be described as a normal $2$-cover of the complete bipartite graph $K_{3,3}$ 
by $\Z_2^4$ (for the definition of a normal $2$-cover, see the 2nd paragraph of 
Subsection~2.1).  
\end{rem}

The paper is organised as follows. 
Section~\ref{sec:known} contains the needed results from graph and group theory. 
In Section~\ref{sec:Nest} we review some results about Nest graphs obtained in 
\cite{JMSV,KR}. Section~\ref{sec:family} is devoted to the Nest graphs in the form 
$\nest(2m;2,m,2+m;1)$, $m$ is odd. The main result (Proposition~\ref{family}) 
is a characterisation, which was mentioned in \cite{JMSV} 
without a proof, and which is needed for us in the proof Theorem~\ref{main}. 
The latter proof is presented in Section~\ref{sec:proof}.
\section{Preliminaries}\label{sec:known}
\subsection{Graph theory}

Given a graph $\G$, let $V(\G)$, $E(\G)$, $A(\G)$ and $\aut(\G)$ denote   
its {\em vertex set}, {\em edge set}, {\em arc set} and {\em automorphism group}, respectively. The number $|V(\G)|$ is called the {\em order} of $\G$. 
The set of vertices adjacent with a given vertex $v$ is denoted by $\G(v)$. 
If $G \leq \aut(\G)$ and $v \in V(\G)$, then the {\em stabiliser} of $v$ in 
$G$ is denoted by $G_v$, the {\em orbit} of $v$ under $G$ by $v^G$, and 
the set of all $G$-orbits by $\orb(G,V(\G))$.  
If $B \subseteq V(\G)$, then the {\em setwise stabiliser} of $B$ in $G$ is denoted 
by $G_{\{B\}}$. 
If $G$ is transitive on $V(\G)$, then $\G$ is said to be {\em $G$-vertex-transitive},     
and $\G$ is simply called {\em vertex-transitive} when it is $\aut(\G)$-vertex-transitive;  ({\em $G$-}){\em edge-} and ({\em $G$-}){\em arc-transitive} graphs are defined correspondingly. 
\medskip

Let $\pi$ be an arbitrary partition of $V(\G)$ and for a vertex $v \in V(\G)$, 
let $\pi(v)$ denote the class containing $v$. 
The {\em quotient graph} of $\G$ with respect to $\pi$, denoted by $\G/\pi$, is 
defined to have vertex set $\pi$, and edges 
$\{ \pi(u), \pi(v) \}$, where $\{u,v\} \in E(\G)$ such that $\pi(u) \ne \pi(v)$. 
Now, if there exists a constant $r$ such that 
$$
\forall \{u,v\} \in E(\G):~ \pi(u) \ne \pi(v)~\text{and}~|\G(u) \cap \pi(v)|=r,
$$
then $\G$ is called an {\em $r$-cover} of $\G/\pi$. The term {\em cover} will also 
be used instead of $1$-cover. 
In the special case when $\pi=\orb(N,V(\G))$ for an intransitive normal subgroup $N \lhd \aut(\G)$, $\G/N$ will also be written for 
$\G/\pi$ and when $\G$ is also an $r$-cover (cover, respectively) of $\G/N$, then 
the term {\em normal $r$-cover} ({\em normal cover}, respectively) will also be used.  
It is well-known that this is always the case when $\G$ is edge-transitive. 
More precisely, if $\G$ is a $G$-edge-transitive graph, $\G$ is regular with  
valency $\kappa$, and $N \lhd G$ is intransitive, then 
$\G$ is a normal $r$-cover of $\G/N$ for some $r$ and $r$ divides $\kappa$.
\medskip

A graph admitting a regular cyclic group of automorphisms is called {\em circulant}. 
A recursive classification of finite arc-transitive circulants was obtained independently by Kov\'acs~\cite{K05} and Li~\cite{Li}. The paper~\cite{K05} also provides an explicit 
characterisation~(see \cite[Theorem~4]{K05}), which was rediscovered recently by 
Li et al.~\cite{LXZ}. The characterisation presented below follows from the proof of 
\cite[Theorem~4]{K05} or from \cite[Theorem~1.1]{LXZ}. 
\medskip

In what follows, given a cyclic group $C$ and a divisor 
$d$ of $|C|$, we denote by $C_d$ the unique subgroup of $C$ of order $d$.  

\begin{thm}\label{K}
{\rm (\cite{K05})} 
Let $\G$ be a connected arc-transitive graph of order $n$ and of valency $\kappa$ 
and suppose that $C \le \aut(\G)$ 
is a regular cyclic subgroup. Then one of the following holds.
\begin{enumerate}[{\rm (a)}]
\item $\G$ is the complete graph.
\item $C$ is normal in $\aut(\G)$.
\item $\B=\orb(C_d,V(\G))$ is a block system 
for $\aut(\G)$ for some divisor $d$ of $\gcd(n,\kappa)$, $d > 1$.   
$\G$ is a normal $d$-cover of $\G/\B$, and $\G/\B$ is a connected  
arc-transitive circulant of valence $\kappa/d$. 
\item $\B_1=\orb(C_d,V(\G))$ and $\B_2=\orb(C_{n/d},V(\G))$ are 
block systems for $\aut(\G)$ for some divisor $d$ of $n$ such that $d > 3$, 
$\gcd(d,n/d)=1$ and $d-1$ divides $\kappa$. $\G/\B_1$ is a connected 
arc-transitive circulant of valency $\kappa/(d-1)$, $\G/\B_2 \cong K_d$, and 
\begin{equation}\label{eq:aut}
\aut(\G)=G_1 \times G_2,
\end{equation}
where $C_d \le G_1$, $G_1 \cong S_d$, $C_{n/d} < G_2$, 
and $G_2 \cong \aut(\G/\B_1)$.
\end{enumerate}
\end{thm}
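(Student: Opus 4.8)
The plan is to study the transitive permutation group $G=\aut(\G)$ on $V(\G)$, which contains the regular cyclic subgroup $C$, and to branch on whether $G$ is primitive. A primitive group containing a regular cyclic subgroup is tightly constrained: combining Burnside's theorem on groups of prime degree with Schur's method (in the form of the classification of primitive permutation groups with a cyclic regular subgroup), such a $G$ either lies in $\AGL(1,p)$ with $n=p$ prime, or it is $2$-transitive. In the former case $C=\soc(G)$ is normal in $G$, giving conclusion~(b). In the latter case $G$ is transitive on ordered pairs of distinct vertices, so the connected nontrivial $G$-arc-transitive graph $\G$ satisfies $E(\G)=\{\{u,v\}:u\neq v\}$, i.e.\ $\G=K_n$, giving~(a). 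The remaining $2$-transitive possibilities ($S_n$, $A_n$, the projective groups $\PSL(d,q)\le G\le\PGaL(d,q)$ on projective points, and the sporadic actions $\PSL(2,11)$, $M_{11}$ of degree $11$ and $M_{23}$ of degree $23$) are all thereby absorbed into~(a).

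In the imprimitive case I would use that, since $C$ is regular and cyclic, every $G$-block system is of the form $\B_d=\orb(C_d,V(\G))$ for a proper divisor $d$ of $n$, the block through $v$ being $v^{C_d}$. For each such system the quotient $\G/\B_d$ inherits the regular cyclic group $C/C_d$ and is a connected arc-transitive circulant of strictly smaller order, which sets up an induction on $n$. The whole difficulty is to reconstruct $\G$ from a quotient, and the decisive invariant is the bipartite graph between two adjacent blocks together with the graph induced on a single block. By $G$-edge-transitivity the crossing number $r=|\G(u)\cap\pi(v)|$ is constant over all edges $\{u,v\}$ with $\pi(u)\neq\pi(v)$, and the preliminary result on edge-transitive graphs makes $\G$ a normal $r$-cover of $\G/\B_d$.

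Two outcomes must then be separated. If, for a suitable $d$, the blocks carry no internal edges and each vertex is joined to a full adjacent block (so $r=d$), then $\G$ is a normal $d$-cover of the smaller circulant $\G/\B_d$, whose valency is $\kappa/d$; since $d\mid n$ and $r=d\mid\kappa$ we get $d\mid\gcd(n,\kappa)$, which is~(c). The genuinely hard outcome is when a maximal block system yields a \emph{complete} quotient. Choosing such a system $\B_2$, primitivity of $G$ on its blocks together with the $2$-transitivity supplied by the complete quotient forces $\G/\B_2\cong K_d$ with a full symmetric group $S_d$ acting on the $d$ blocks. Because $C$ is cyclic and $S_d$ contributes a $d$-cycle permuting the blocks, the complementary subgroup $C_{n/d}$ must satisfy $\gcd(d,n/d)=1$, so by the Chinese Remainder Theorem $C=C_d\times C_{n/d}$ and the second system $\B_1=\orb(C_d,V(\G))$ also exists. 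At this point I would identify $\G$ with the tensor (direct) product $K_d\times(\G/\B_1)$; reading off the valency as $\kappa=(d-1)\nu$, where $\nu$ is the valency of $\G/\B_1$, yields $d-1\mid\kappa$ and $\nu=\kappa/(d-1)$, and the final task is the decomposition $\aut(\G)=G_1\times G_2$ with $G_1\cong S_d\ge C_d$ and $G_2\cong\aut(\G/\B_1)>C_{n/d}$, which is~(d).

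I expect the product decomposition in~(d) to be the principal obstacle. Proving $\aut(\G)=G_1\times G_2$ amounts to a unique-factorisation statement for the direct product of $K_d$ with the circulant $\G/\B_1$: one must show that every automorphism preserves both block systems and splits into independent actions on the two coprime coordinates, with no automorphism mixing the complete factor into the circulant factor. This rigidity is precisely what the coprimality $\gcd(d,n/d)=1$ and the completeness of $K_d$ are there to guarantee, but extracting it cleanly—while simultaneously checking that the small values $d\le 3$ are degenerate and fall under earlier items—is the delicate part. Controlling which divisors $d$ can occur, and keeping cases~(c) and~(d) disjoint, is managed through the block (Schur-ring) structure of circulants over $\Z_n$, which is where the inductive bookkeeping carries the weight.
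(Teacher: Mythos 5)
This statement is not proved in the paper: it is imported verbatim, with the text explicitly saying that it ``follows from the proof of \cite[Theorem~4]{K05} or from \cite[Theorem~1.1]{LXZ}''. So there is no internal proof to compare against; I can only judge your sketch on its own terms. Its overall architecture does match the strategy of the cited works (primitive versus imprimitive dichotomy, block systems of $\aut(\G)$ all being of the form $\orb(C_d,V(\G))$, induction through normal quotients, and the lexicographic-type outcome (c) versus the tensor-with-$K_d$ outcome (d)). The primitive branch is fine modulo citing the classification of primitive groups with a regular cyclic subgroup ($G\le\AGL(1,p)$ or $G$ is $2$-transitive), which indeed yields (b) or (a).

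The genuine gap is the exhaustiveness of the dichotomy in the imprimitive, non-normal case. You say ``if, for a suitable $d$, the blocks carry no internal edges and each vertex is joined to a full adjacent block'' and otherwise pass to a ``maximal block system yielding a complete quotient'', but nothing in your sketch rules out the a priori possibility that every invariant partition has $1<r<d$ with internal edges and a non-complete, non-lexicographic local structure. Establishing that one of the configurations (c) or (d) must occur is the entire content of the theorem; in \cite{K05} it rests on the Leung--Man classification of Schur rings over cyclic groups applied to the transitivity module of $\aut(\G)_v$, and in \cite{Li,LXZ} on a CFSG-aided normal-quotient analysis. You gesture at ``the block (Schur-ring) structure of circulants over $\Z_n$'' only in the closing sentence, without invoking the classification that makes the case division legitimate. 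Two further steps in case (d) are asserted with non-arguments: the claim that $C$ being cyclic ``forces'' $\gcd(d,n/d)=1$ does not follow from the image of $C$ on the blocks being a $d$-cycle (that image is cyclic of order $d$ regardless of coprimality), and the appearance of the full $S_d$ on the blocks is a conclusion of the direct-product decomposition $\aut(\G)=G_1\times G_2$, not something that $2$-transitivity of the complete quotient supplies at that stage. These are exactly the points you flag as ``the principal obstacle'', so the proposal is an honest plan pointing in the right direction, but it does not constitute a proof.
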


\begin{rem}
Although not used later, it is worth mentioning that the graph $\G$ in part (c) is isomorphic to the lexicographical product $\G/\B[\overline{K}_d]$, 
where $\overline{K}_d$ is the edgeless 
graph on $d$ vertices, and the graph $\G$ in part (d) is isomorphic to the 
tensor (direct) product $K_d \times \G/\B_1$ (see \cite{K05,LXZ}). 
\end{rem}

In the rest of the section we restrict ourselves to arc-transitive circulants of small 
valency. 

\begin{lem}\label{BFSX}
{\rm (\cite[part (ii) of Corollary~1.3]{BFSX})}
Let $\G$ be a connected arc-transitive graph of order $n$ and of 
valency $\kappa$, where $\kappa=3$ or $4$, and suppose that $C \le \aut(\G)$ 
is a regular cyclic subgroup.
Then one of the following holds.
\begin{enumerate}[{\rm (1)}] 
\item $\G$ is isomorphic to one of the graphs: $K_4, K_5, K_{3,3}$ and 
$K_{5,5}-5K_2$.
\item $\kappa=4$ and $C$ is normal in $\aut(\G)$.
\item $\kappa=4$, $n$ is even, $\B=\orb(C_2,V(\G))$ is a block system for $\aut(\G)$, 
and $\G$ is a normal $2$-cover of $\G/\B$, which is a cycle.
\end{enumerate}
\end{lem}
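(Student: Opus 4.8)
The plan is to obtain the statement by specialising Theorem~\ref{K} to the values $\kappa \in \{3,4\}$ and analysing its four cases (a)--(d) one at a time. Write $C \cong \Z_n$. Since $C$ is regular and $\G$ is arc-transitive, the stabiliser $\aut(\G)_0$ of the identity vertex acts transitively on the connection set $S$ formed by the $\kappa$ neighbours of $0$, and $S=-S$; these two facts drive the small-case analysis below.

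Cases (a) and (d) are quickly disposed of. A complete graph carrying a regular cyclic group has valency $n-1$, so case (a) yields $K_4$ when $\kappa=3$ and $K_5$ when $\kappa=4$, both in conclusion~(1). In case (d) the conditions $(d-1)\mid\kappa$ and $d>3$ force $d=5$ and $\kappa=4$; then $\G/\B_1$ has valency $\kappa/(d-1)=1$, hence equals $K_2$, so $n=10$, while $\G/\B_2\cong K_5$. By the tensor-product description in the Remark following Theorem~\ref{K}, $\G\cong K_5\times K_2$, which is $K_{5,5}-5K_2$, again in conclusion~(1). For $\kappa=3$ the same divisibility would require $d=4$, $n=8$ and $\gcd(d,n/d)=2$, violating $\gcd(d,n/d)=1$, so case (d) does not arise.

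The core of the proof is cases (b) and (c). For $\kappa=4$, case (b) is verbatim conclusion~(2). For $\kappa=3$, case (b) is impossible: here $C$ is normal, so conjugation embeds $\aut(\G)_0$ into $\aut(C)\cong\Z_n^*$, but a symmetric connection set of odd size $3$ must contain the unique involution $n/2$ (forcing $n$ even), and every unit of $\Z_n$ is odd and therefore fixes $n/2$; thus $\aut(\G)_0$ fixes $n/2\in S$ and cannot be transitive on $S$, a contradiction. In case (c) we have $d\mid\gcd(n,\kappa)$ with $d>1$ and $\G/\B$ an arc-transitive circulant of valency $\kappa/d$. When $\kappa=3$ the only choice is $d=3$, giving $\G/\B$ of valency $1$, hence $\G/\B=K_2$, $n=6$, and the normal $3$-cover of $K_2$ on six vertices is $K_{3,3}$. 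When $\kappa=4$ we have $d\in\{2,4\}$: the value $d=2$ gives conclusion~(3), since $\G/\B$ then has valency $2$ and is a cycle; the value $d=4$ gives a normal $4$-cover of $K_2$ on the two $C_4$-orbits.

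I expect the main obstacle to sit in this last branch. For $\kappa=4$, $d=4$ one is led to $K_{4,4}$, whose regular cyclic group $\Z_8$ is not normal and whose $C_2$-orbits do not form a block system, so that, read literally, it meets none of (1)--(3). Reconciling this branch --- either by showing it is excluded under the precise hypotheses of \cite{BFSX} or by incorporating the complete bipartite graphs $K_{\kappa,\kappa}$ into the exceptional list --- is the delicate step. The remaining effort is routine: tracking the degenerate valency-$1$ quotients that repeatedly collapse to a normal cover of $K_2$, and identifying the tensor product $K_5\times K_2$ with $K_{5,5}-5K_2$ and the relevant normal covers with the named small graphs.
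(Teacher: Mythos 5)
The paper contains no proof of this lemma at all: it is quoted as part (ii) of Corollary~1.3 of \cite{BFSX}, so there is no internal argument to compare yours against, and re-deriving it from Theorem~\ref{K} is a legitimate plan. Most of your derivation is sound: families (a) and (d) are handled correctly (including the identification $K_5\times K_2\cong K_{5,5}-5K_2$ and the exclusion of $d=4$ for $\kappa=3$ via $\gcd(d,n/d)=1$), the elimination of family (b) for $\kappa=3$ via the fixed involution $n/2$ in a symmetric connection set of odd size is correct, and the branches $d=3$ (giving $K_{3,3}$) and $d=2$ (giving conclusion (3)) of family (c) are fine.

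The branch you flag, however --- family (c) with $\kappa=4$ and $d=4$ --- is a genuine gap, and it cannot be closed for the statement as printed, because $K_{4,4}=\cay(\Z_8,\{1,3,5,7\})$ really does satisfy the hypotheses and fail all three conclusions. It is connected, arc-transitive, $4$-valent, and $\Z_8$ acts regularly; it is absent from list (1); $\Z_8$ is not normal in $\aut(K_{4,4})\cong (S_4\times S_4)\rtimes\Z_2$ of order $1152$; and the partition into the four pairs $\{i,i+4\}$, which is $\orb(C_2,V(\G))$ here, is not a block system, since the stabiliser of a vertex is transitive on the other three vertices of its part (the transposition $(0\ 2)$ is an automorphism carrying $\{0,4\}$ to $\{2,4\}$). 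So your proof is incomplete exactly where you say it is, and the obstruction lies in the transcription of \cite{BFSX}: $K_{4,4}$, i.e.\ the $m=4$ member of the family $C_m[2K_1]$ that conclusion (3) is describing, ought to appear in list (1) (for every other $m\ge 3$ the fibres of $C_m[2K_1]$ are the canonical antipodal pairs and do form a block system, so (3) is accurate there). The omission is harmless for this paper, since every subsequent application of the lemma is to a quotient that is either $3$-valent or of odd order, where $K_{4,4}$ cannot occur; but a complete write-up of your argument must either add $K_{4,4}$ to (1) or weaken (3), rather than hope the branch disappears.
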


\begin{lem}\label{6}
Let $\G$ be a connected arc-transitive graph of order $n > 14$ and of valency $6$, and suppose that $C \le \aut(\G)$ is a regular cyclic subgroup.
Then $\aut(\G)$ contains a normal subgroup $N$ such that one of the following 
holds.  
\begin{enumerate}[{\rm (1)}]
\item $N=C$, or 
\item $n \equiv 4 \!\!\pmod 8$ and $N=C_{n/4}$, or   
\item $N \cong \Z_3^\ell$ for $\ell \ge 2$ and $C_3 < N$.
\end{enumerate}
\end{lem}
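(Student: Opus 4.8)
The plan is to apply Theorem~\ref{K} with $\kappa = 6$ and to treat each of its four outcomes, in each case either bounding $n$ so as to contradict $n > 14$ or exhibiting the required normal subgroup $N$.

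First I would dispose of the degenerate outcomes. Outcome~(a) forces $\G = K_7$, so $n = 7$. In outcome~(b) the group $C$ is itself normal, so I take $N = C$ and land in conclusion~(1). In outcome~(c) the block size $d$ divides $\gcd(n,6)$ and exceeds $1$, hence $d \in \{2,3,6\}$, with $\G/\B$ a connected arc-transitive circulant of valency $6/d$: for $d = 6$ the quotient has valency $1$, so $\G/\B = K_2$ and $n = 12$; for $d = 2$ the quotient has valency $3$, and since only part~(1) of Lemma~\ref{BFSX} applies when $\kappa = 3$, one gets $\G/\B \in \{K_4, K_{3,3}\}$ and $n \in \{8,12\}$. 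In outcome~(d) the conditions $d > 3$ and $(d-1) \mid 6$ leave $d \in \{4,7\}$, and $d = 7$ makes $\G/\B_1$ a valency-$1$ graph, i.e. $K_2$, forcing $n = 14$. All of these violate $n > 14$; it is in fact the case $d = 7$ that accounts for the exact threshold in the hypothesis.

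Next I would handle outcome~(d) with $d = 4$ to obtain conclusion~(2). Here $\gcd(4,n/4) = 1$ forces $n/4$ odd, i.e. $n \equiv 4 \pmod 8$. By Theorem~\ref{K}(d), $\aut(\G) = G_1 \times G_2$ with $C_{n/4} < G_2 \cong \aut(\G/\B_1)$, and $\G/\B_1$ is a cycle on $n/4$ vertices, so $G_2$ is dihedral of order $2(n/4) = n/2$. Thus $C_{n/4}$ has index $2$ in $G_2$ and so is normal in $G_2$; as $G_1$ centralises $G_2$, the subgroup $C_{n/4}$ is normal in $\aut(\G)$, and $N = C_{n/4}$ satisfies~(2).

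The real work is outcome~(c) with $d = 3$, yielding conclusion~(3); this is the step I expect to be the main obstacle. Now $\G/\B$ is a cycle on $m := n/3 \ge 5$ vertices and $\B = \orb(C_3,V(\G))$ consists of fibres of size $3$. I would first argue the product structure: since $\G$ is a normal $3$-cover of a cycle and has valency $6$, each vertex has exactly $3$ neighbours in each adjacent fibre, and as fibres have size $3$ consecutive fibres must be joined by a complete bipartite graph $K_{3,3}$, whence $\G \cong (\G/\B)[\overline{K}_3]$. It follows that the kernel $K$ of the $\aut(\G)$-action on $\B$ permutes the three vertices of each fibre completely and independently, so $K \cong S_3^m$. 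Since $K \lhd \aut(\G)$, its derived subgroup $N := [K,K] \cong [S_3,S_3]^m \cong \Z_3^m$ is characteristic in $K$ and hence normal in $\aut(\G)$, with $\ell = m \ge 2$. Finally $C_3$ induces a $3$-cycle on each fibre, so $C_3 \le N$, and the inclusion is strict because $m \ge 2$; thus $N$ satisfies~(3). The delicate points, on which I would concentrate, are justifying that the $3$-cover of a cycle genuinely forces the complete bipartite joins (hence $K \cong S_3^m$) and that the distinguished subgroup $C_3$ lies inside the elementary abelian part $[K,K]$.
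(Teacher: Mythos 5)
Your proof is correct and follows essentially the same route as the paper's: run through the four outcomes of Theorem~\ref{K}, kill (a), the small $d$'s in (c), and $d=7$ in (d) by the order bound $n>14$ (via Lemma~\ref{BFSX} where needed), read off conclusion (1) from (b), conclusion (2) from (d) with $d=4$, and conclusion (3) from (c) with $d=3$. The only cosmetic difference is in the $d=3$ case, where the paper takes $N$ to be the (normal, hence characteristic) Sylow $3$-subgroup of the kernel while you take the derived subgroup of $K\cong S_3^{\,m}$ — these are the same subgroup $A_3^{\,m}\cong\Z_3^m$, and your explicit derivation of the $K_{3,3}$ joins and the product structure just makes the paper's terse ``$\ell\ge 2$'' claim fully explicit.
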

\begin{proof} 
$\G$ belongs to one of the families (a)--(d) in Theorem~\ref{K}. 

Family~(a): This case cannot occur as $n > 14$.

Family~(b): Part (1) follows.

Family~(c): In this case $\orb(C_d,V(\G))$ is a block system for $\aut(\G)$, where 
$d=2$ or $d=3$. Let $\B=\orb(C_d,V(\G))$. 

If $d=2$, then $\G/\B$ has valency $3$. It follows from 
Lemma~\ref{BFSX} that $n \le 12$, but this is excluded. 

If $d=3$, then choose $N$ to be the 
Sylow $3$-subgroup of the kernel of the action of $\aut(\G)$ on $\B$. 
It follows that $N \cong \Z_3^\ell$ for some $\ell \ge 2$. Also, $N$ is characteristic in the 
latter kernel, which implies that $N \lhd \aut(\G)$. Finally, 
$\orb(N,V(\G))=\B=\orb(C_3,V(\G))$, and so $C_3 < N$, i.e., part (3) holds.  

Family~(d): In this case it follows from the assumption that $n > 14$ that     
$\orb(C_4,V(\G))$ and $\orb(C_{n/4},V(\G))$ are block systems for 
$\aut(\G)$ and $n \equiv 4 \!\!\pmod 8$.
Furthermore, 
$$
\aut(\G) = G_1 \times G_2,
$$
where $C_4 < G_1$, $G_1 \cong S_4$, $C_{n/4} < G_2$ and 
$G_2 \cong  \aut(\G/\B_1)$, where $\B_1=\orb(C_4,V(\G))$. 
The graph $\G/\B_1$ is connected of valency $2$, hence it is a 
cycle of length $n/4$. It follows that $C_{n/4}$ is characteristic in 
$G_2$, and as $G_2 \lhd \aut(\G)$, part (2) follows. 
\end{proof}
\subsection{Group theory}

Our terminology and notation are standard and 
we follow the books~\cite{DM,H}. 
The {\em socle} of a group $G$, denoted by $\soc(G)$, is the subgroup generated by the set of all minimal normal subgroups (see~\cite[p.~111]{DM}). 
The group $G$ is called {\em almost simple} if $\soc(G)=T$, where $T$ is a non-abelian simple group. In this case $G$ is embedded in $\aut(T)$ so that its socle is embedded 
via the inner automorphisms of $T$, and we also write $T \le G \le \aut(T)$ (see \cite[p.~126]{DM}).  

Our proof of Theorem~\ref{main} relies on 
the classification of primitive groups containing a cyclic subgroup with two orbits due to 
M\"uller~\cite{M}.  Here we need only the special case when the cyclic subgroup is semiregular. 

\begin{thm}\label{M}
{\rm (\cite[Theorem~3.3]{M})} 
Let $G$ be a primitive permutation group of degree $2n$ containing an element with two orbits of the same length. Then one of the following holds, where $G_0$ denotes the 
stabiliser of a point in $G$. 
\begin{enumerate}[{\rm (1)}] 
\item (Affine action) $\Z_2^m \lhd G \le \AGL(m,2)$, 
where $n=2^{m-1}$. Furthermore, one of the following holds.
\begin{enumerate}[{\rm (a)}] 
\item $n=2$, and $G_0=\GL(2,2)$.
\item $n=2$, and $G_0=\GL(1,4)$.
\item $n=4$, and $G_0=\GL(3,2)$.
\item $n=8$, and $G_0$ is one of the following groups: $\Z_5 \rtimes \Z_4$, 
$\GaL(1,16)$, $(\Z_3 \times \Z_3) \rtimes \Z_4$, $\SiL(2,4)$, $\GaL(2,4)$, 
$A_6$, $\GL(4,2)$, $(S_3 \times S_3) \rtimes \Z_2$, $S_5$, $S_6$ and $A_7$.
\end{enumerate}
\item 
(Almost simple action) $G$ is an almost simple group and one of the following holds. 
\begin{enumerate}[{\rm (a)}]
\item $n \ge 3$, $\soc(G)=A_{2n}$, and 
$A_{2n} \le G \le S_{2n}$ in its natural action. 
\item $n=5$, $\soc(G)=A_5$, and $A_5 \le G \le S_5$ in its action on the set of $2$-subsets of $\{1,2,3,4,5\}$.
\item $n=(q^d-1)/2(q-1)$, $\soc(G)=\PSL(d,q)$, and $\PSL(d,q) \le G \le 
\PGaL(d,q)$ for some odd prime power $q$ and even number $d  \ge 2$ such that $(d,q) \ne (2,3)$.  
\item $n=6$ and $\soc(G)=G=M_{12}$.
\item $n=11$, $\soc(G)=M_{22}$, and $M_{22} \le G \le \aut(M_{22})$.
\item $n=12$ and $\soc(G)=G=M_{24}$.
\end{enumerate}
\end{enumerate}
\end{thm}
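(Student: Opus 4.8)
The plan is to derive this classification from the O'Nan--Scott theorem together with the classification of finite simple groups (CFSG), exploiting the fact that the hypothesis is extremely restrictive. A cyclic group $C=\la g \ra$ acting semiregularly with two orbits of length $n$ has order exactly $n=|\Omega|/2$, where $\Omega$ is the permutation domain; thus $G$ contains an element $g$ whose order equals \emph{half the degree} and which consists of two $n$-cycles. Since element orders in primitive groups are, with few exceptions, far smaller than the degree, this pins $G$ down to a short list of ``standard'' primitive actions. First I would invoke the O'Nan--Scott theorem to split into the affine (HA), almost simple (AS), and product/diagonal (PA, SD, CD, HC, TW) types, and treat each in turn.

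For the product- and diagonal-type cases I would argue that a semiregular cyclic subgroup of order $|\Omega|/2$ cannot embed in a group preserving a nontrivial product or diagonal structure. Concretely, in the product action type one has $\soc(G)=T^k$ with $G \le H \wr S_k$ acting on $\Delta^k$, and the orders of elements are governed by lcm-type expressions in the element orders of the almost simple factor $H$ on $\Delta$ and of the top group $S_k$; the known upper bounds on maximal element orders in primitive almost simple groups then make an element of order $|\Delta|^k/2$ impossible for $k \ge 2$. The diagonal-type (SD, CD, HC) and the twisted wreath (TW) cases are excluded by the same element-order considerations. This leaves the affine and almost simple cases.

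In the affine case $\soc(G)=\Z_p^m$ and $G \le \AGL(m,p)$ has degree $p^m=2n$; since $2n$ is even, $p=2$ and $n=2^{m-1}$, so $G_0 \le \GL(m,2)$. Writing the semiregular element as $g=(v,A)$ with $v \in \Z_2^m$ and $A \in G_0$, the requirement that $g$ have $2$-power order $2^{m-1}$ forces $A$ to be unipotent, and the order of an affine transformation with unipotent linear part is bounded by $2^{1+\lceil \log_2 m \rceil}$. Comparing this with the required $2^{m-1}$ bounds $m$; the residual small values of $m$, together with the primitivity constraint that $G_0$ act irreducibly and the condition that $C$ have \emph{exactly} two orbits, are settled by a finite inspection, yielding $n \in \{2,4,8\}$ and the explicit list of stabilisers $G_0$ in part~(1).

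The almost simple case $T \le G \le \aut(T)$ is the heart of the argument, and here I would proceed family by family through the CFSG. The element $g$ of order $n=|\Omega|/2$ with two $n$-cycles is a semiregular element of near-maximal feasible order, so it must arise from one of the few mechanisms producing such elements: the natural action of $A_{2n}$ on $2n$ points, where a product of two $n$-cycles is exactly such an element, giving (2)(a) (with the exceptional action of $A_5$ on $2$-subsets giving (2)(b)); a Singer cycle of $\PGL(d,q)$ on the points of $\PG(d-1,q)$, a cyclic regular subgroup of order $(q^d-1)/(q-1)$ an index-$2$ subgroup of which has two orbits precisely when this number is even, yielding the projective case (2)(c) with $q$ odd, $d$ even and $(d,q)\ne(2,3)$; and the sporadic coincidences in $M_{12}$, $M_{22}$, $M_{24}$ of (2)(d)--(f). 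The main obstacle is proving \emph{completeness}, i.e.\ that no other simple group in any other primitive action admits such an element: for groups of Lie type this requires the sharp known bounds on element orders (the largest realised by Singer-type tori) to show that only subspace/projective actions can accommodate an element of order half the degree, followed by a careful arithmetic analysis to determine exactly when the relevant cyclic subgroup has two orbits rather than one or more. Verifying the parity and semiregularity conditions in (2)(c), and ruling out the remaining sporadic and exceptional Lie-type groups from their tables of element orders and maximal subgroups, is the most delicate and computation-heavy part of the proof.
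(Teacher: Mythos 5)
This statement is not proved in the paper at all: it is quoted verbatim (in the special case of equal orbit lengths) from M\"uller \cite{M}, so there is no internal proof to compare your attempt against; the comparison must be with the strategy of the cited literature. At that level your outline is faithful to how such classifications actually go (O'Nan--Scott reduction plus CFSG, with the two-orbit elements arising from double $n$-cycles in $A_{2n}$, Singer tori of $\PGL(d,q)$, and sporadic coincidences), and your opening observation is correct and worth making explicit: two orbits of equal length $n$ force the element to be a product of two $n$-cycles, hence of order exactly $n$ and semiregular, which is why the paper can state M\"uller's theorem in this semiregular form. Your affine analysis is also essentially sound: the linear part must be a $2$-element, hence unipotent in characteristic $2$, giving $2^{m-1}\le 2^{1+\lceil\log_2 m\rceil}$ and so $m\le 5$, after which a finite inspection (which must also eliminate $m=5$, i.e.\ degree $32$) yields the list in part (1).

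The genuine gap is in your exclusion of the product-action type, and more broadly in the completeness claim. You assert that maximal-element-order bounds alone kill an element of order $|\Delta|^k/2$ for $k\ge 2$, but this is false as stated: in $S_{12}\wr S_2$ in product action on $144$ points, a pair $(h_1,h_2;\id)$ with $h_1$ of order $8$ and $h_2$ of order $9$ has order $\mathrm{lcm}(8,9)=72=144/2$, so such elements do exist in PA-type primitive groups. What rules these out is not the order but the semiregularity with exactly two orbits: the orbit of $(x,y)$ under such an element has length $\mathrm{lcm}$ of the coordinate orbit lengths, and one must check that no choice of $h_1,\dots,h_k$ and top permutation makes \emph{all} orbits have length $|\Delta|^k/2$ with exactly two of them --- an argument you never give. (A nearby cautionary example: $S_4\wr S_2$ on $16$ points does contain a semiregular element with two orbits of length $8$, but this group is affine with socle $\Z_2^4$ and point stabiliser $(S_3\times S_3)\rtimes\Z_2$, landing in case (1)(d) rather than contradicting the theorem; your argument as written does not distinguish these situations.) Similarly, in the almost simple case you yourself concede that proving no other Lie-type, sporadic or exceptional action admits such an element is ``the most delicate and computation-heavy part'': that part is precisely the content of M\"uller's Theorem~3.3 (building on the earlier CFSG-based classifications of primitive groups with a cyclic regular or two-orbit subgroup), and without it your text is a roadmap to the known proof rather than a proof.
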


If $G$ is a group in one of the families (a)-(f) in part (2) above, then it follows from 
\cite[Theorem~4.3B]{DM} that $\soc(G)$ is the unique 
minimal normal subgroup of $G$. Therefore, we have the following corollary.
 
\begin{cor}\label{M-p1}
Let $G$ be a primitive permutation group in one of the families (a)-(f) in part (2) of 
Theorem~\ref{M}, and let $N \lhd G$, $N \ne 1$. Then $N$ is also primitive.
\end{cor}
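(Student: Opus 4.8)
The plan is to reduce the assertion to the primitivity of the socle and then to read this off from the known transitivity properties of the six actions. Write $\Omega$ for the set on which $G$ acts, $|\Omega|=2n$, and put $T=\soc(G)$. By the remark preceding the corollary, $T$ is the unique minimal normal subgroup of $G$. Since $G$ is finite, any normal subgroup $N\lhd G$ with $N\neq 1$ contains a minimal normal subgroup of $G$, and as $T$ is the only one this forces $T\le N\le G$. So it suffices to treat the extremal case: I claim that $N$ is primitive as soon as $T$ is primitive and $T\le N$.

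For that claim I would use the elementary observation that a block for a transitive group is automatically a block for any of its transitive subgroups. Concretely, $N$ is transitive because the smaller group $T$ already is; and if $\B$ were a nontrivial system of imprimitivity for $N$, then every $B\in\B$ would satisfy $B^g=B$ or $B^g\cap B=\emptyset$ for all $g\in N$, hence in particular for all $g\in T$, so $\B$ would also be a nontrivial system of imprimitivity for $T$, contradicting the primitivity of $T$. Thus everything comes down to verifying that $T$ is primitive on $\Omega$ in each of the families (a)--(f).

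This last verification is the only substantive step, and it is short. In families (a), (c), (d), (e) and (f) the socle is in fact $2$-transitive on $\Omega$: $A_{2n}$ in its natural action, $\PSL(d,q)$ on the points of $\PG(d-1,q)$, and the Mathieu groups $M_{12}$, $M_{22}$, $M_{24}$ in their usual multiply transitive actions; and $2$-transitivity implies primitivity. The one family whose socle is not $2$-transitive is (b), where $T=A_5$ acts on the ten $2$-subsets of a $5$-set; here I would instead note that a point stabiliser in $A_5$ has order $6$ and, being isomorphic to $S_3$, is a maximal subgroup of $A_5$, so this action is primitive (of rank $3$). The main obstacle, such as it is, lies entirely in recalling these standard facts---in particular in remembering that case (b) must be handled via maximality of the stabiliser rather than via $2$-transitivity; the rest of the argument is purely formal.
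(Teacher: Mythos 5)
Your proposal is correct and follows essentially the same route as the paper, which justifies the corollary in one line by noting (via \cite[Theorem~4.3B]{DM}) that $\soc(G)$ is the unique minimal normal subgroup, so that $\soc(G)\le N$; you have simply made explicit the remaining steps, namely that $\soc(G)$ is primitive in each of the families (a)--(f) and that any overgroup of a primitive group is primitive. Your case-by-case verification (two-transitivity in (a), (c)--(f), maximality of the point stabiliser $S_3<A_5$ in (b)) is accurate and consistent with the subdegree information the paper records in Lemma~\ref{M-p2&3}.
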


For a transitive permutation group $G \le \sym(\Omega)$, the {\em subdegrees} of 
$G$ are the lengths of the orbits of a point stabiliser $G_\omega$, 
$\omega \in \Omega$. 
Since $G$ is transitive, it follows that the subdegrees do not depend on the choice of 
$\omega$ (see~\cite[p.~72]{DM}). The number of orbits of $G_\omega$ is called 
the {\em rank} of $G$. 
The actions of a group $G$ on sets $\Omega$ and $\Omega'$ are said to be 
{\em equivalent} if there is a bijection $\varphi : \Omega \to \Omega'$ 
such that 
$$
\forall \omega \in \Omega,~\forall g \in G:~
\varphi(\omega^g)=(\varphi(\omega))^g.
$$ 

Now, suppose that $G$ is a group in one of the 
families (a)-(f) in part (2) of Theorem~\ref{M}.
If $G$ is in family (a), then the action is unique up to equivalence and $G$ is clearly 
$2$-transitive. If $G$ is in family (b), then the action is unique up to equivalence and the subdegrees are $1, 3$ and $6$. 
Let $G$ be in family (c). The semiregular cyclic subgroup of $G$ with two orbits is 
contained in a regular cyclic group, called the 
{\em Singer subgroup} of $\PGL(d,q)$ (see \cite[Chapter~2, Theorem~7.3]{H}). 
In this case the action is unique up to equivalence if and only if $d=2$. 
If $d \ge 4$, then the action of $G$ is equivalent to either its natural action on the set of 
points of the projective geometry $\PG(d-1,q)$, or to its natural action on the set of hyperplanes of $\PG(d-1,q)$. In both actions $G$ is $2$-transitive. 
Finally, if $G$ is in the families (d)-(f), then the action is unique up to equivalence  
and $G$ is $2$-transitive (this can also be read off from~\cite{Atlas}). 
All this information is summarised in the lemma below. 

\begin{lem}\label{M-p2&3}
Let $G$ be a primitive permutation group in one of the families 
(a)-(f) in part (2) of Theorem~\ref{M}.    
\begin{enumerate}[{\rm (1)}]
\item $G$ is $2$-transitive, unless $G$ belongs to family (b). In the latter case the  subdegrees are $1, 3$ and $6$. 
\item The action of $G$ is unique up to equivalence, unless $G$ is in family 
(c) and $d \ge 4$. In the latter case $G$ admits two inequivalent 
faithful actions, namely, the natural actions on the set of points and the set of 
hyperplanes, respectively, of the projective geometry $\PG(d-1,q)$.
\end{enumerate}
\end{lem}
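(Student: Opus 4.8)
The plan is to verify both assertions family by family, using throughout that two transitive actions of $G$ are equivalent (in the sense defined above) if and only if the corresponding point stabilisers are conjugate in $G$. Since every action appearing in part~(2) of Theorem~\ref{M} is primitive, the stabilisers are maximal subgroups, so part~(2) reduces to counting the conjugacy classes of maximal subgroups of $G$ of index $2n$. Part~(1) is then a collection of classical transitivity facts, together with one elementary computation for family~(b).

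For part~(1), the $2$-transitivity is classical in every family except~(b). In family~(a) we have $A_{2n}\le G\le S_{2n}$ in its natural action, and $A_{2n}$ is already $(2n-2)$-transitive, hence $2$-transitive for $n\ge 2$. In family~(c) the stabiliser is that of a projective point (equivalently, a hyperplane) of $\PG(d-1,q)$, and $\PSL(d,q)$ is $2$-transitive on the points and on the hyperplanes of $\PG(d-1,q)$ by the classical parabolic/Singer argument (see \cite[Chapter~2]{H}); as $\PSL(d,q)\le G$, the group $G$ is $2$-transitive as well. In families~(d)--(f) the Mathieu groups $M_{12},M_{22},M_{24}$ act multiply transitively on $12,22,24$ points, which can be read off from \cite{Atlas}. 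It remains to treat family~(b), where $G$ acts on the $\binom{5}{2}=10$ two-element subsets of $\{1,2,3,4,5\}$. Fixing the pair $\{1,2\}$, its stabiliser has three orbits on the $10$ pairs: $\{1,2\}$ itself, the six pairs meeting $\{1,2\}$ in exactly one point, and the three pairs disjoint from $\{1,2\}$. Hence $G$ has rank $3$ with subdegrees $1,6,3$, and in particular is not $2$-transitive.

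For part~(2) I would argue as follows. In family~(c) with $d\ge 4$ the two end-node parabolic subgroups $P_1$ and $P_{d-1}$ (stabilisers of a point and of a hyperplane) both have index $(q^d-1)/(q-1)=2n$ and are interchanged by the inverse-transpose graph automorphism of $\PSL(d,q)$, which lies in $\aut(\PSL(d,q))$ but not in $\PGaL(d,q)$; since field automorphisms preserve the dimension of a subspace, $P_1$ and $P_{d-1}$ remain non-conjugate in any $G\le\PGaL(d,q)$, so the point- and hyperplane-actions are genuinely inequivalent. That these are the \emph{only} actions of this degree follows from the classical fact that the minimal index of a maximal subgroup of $\PSL(d,q)$ equals $(q^d-1)/(q-1)$ and is attained exactly by $P_1$ and $P_{d-1}$. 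When $d=2$ the points and hyperplanes of $\PG(1,q)$ coincide, so $P_1=P_{d-1}$ and the action is unique. For the remaining families one shows instead that there is a single conjugacy class of maximal subgroups of index $2n$: for $M_{12},M_{22},M_{24}$ and for family~(b) this is read off from the maximal-subgroup data in \cite{Atlas}, while for $A_{2n}\le G\le S_{2n}$ the point stabiliser $A_{2n-1}$ (respectively $S_{2n-1}$) is the unique maximal subgroup of index $2n$ whenever $2n\ne 6$.

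The step I expect to be the main obstacle is the uniqueness in part~(2), and within it two points deserve care. First, family~(c) rests on the minimal-index theorem for $\PSL(d,q)$ together with the precise action of the graph and field automorphisms on the two parabolic classes, which is exactly what produces the single genuine exception to uniqueness. Second, the small-degree case $2n=6$ of family~(a), where $G\in\{A_6,S_6\}$ carries the exceptional outer automorphism interchanging the two classes of index-$6$ maximal subgroups, must be examined separately to confirm that it does not disturb the statement (equivalently, that the natural action is the one relevant here). Any lingering low-degree coincidences between the families can be ruled out by comparing orders and socles, using Corollary~\ref{M-p1} to pin down the unique minimal normal subgroup in each case.
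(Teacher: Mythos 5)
Your overall route is the same as the paper's: the paper gives no formal proof of this lemma at all, only the preceding paragraph which asserts the classical facts family by family (citing Huppert for the Singer/parabolic material and the Atlas for the sporadic groups), so your write-up is in fact more detailed than the source. Part (1), the reduction of part (2) to conjugacy classes of point stabilisers, and the family-(c) analysis (the two end-node parabolics being interchanged only by the graph automorphism, which is not available inside $\PGaL(d,q)$) are all correct. A minor caveat: the minimal-index theorem for $\PSL(2,q)$ has exceptions at $q\in\{5,7,9,11\}$, so for $d=2$ you still need the (easy, true) check that each of these groups has a single conjugacy class of maximal subgroups of index $q+1$.

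The genuine problem sits exactly where you hedge, in the uniqueness claim of part (2), and it is caused by exceptional outer automorphisms. You flag $2n=6$ in family (a) but leave it ``to be confirmed''; carrying out the confirmation refutes the claim: $A_6$ (resp.\ $S_6$) has \emph{two} conjugacy classes of maximal subgroups of index $6$, the natural $A_5$ (resp.\ $S_5$) and the exotic $\PSL(2,5)$ (resp.\ $\PGL(2,5)$), hence two inequivalent primitive degree-$6$ actions; since the outer automorphism swaps the two classes of order-$3$ elements, both actions contain permutations of cycle type $(3,3)$ and both satisfy the hypotheses of Theorem~\ref{M}. Worse, your assertion that the Atlas gives a single class of index-$2n$ maximal subgroups for the Mathieu groups is false for $M_{12}$: it has two classes of maximal $M_{11}$'s of index $12$, interchanged by its outer automorphism, and since that automorphism fixes the classes 6A and 6B, both degree-$12$ actions contain $(6,6)$-elements. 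With the paper's strict notion of equivalence (conjugate point stabilisers, which is exactly how the lemma is used via \cite[Lemma~1.6B]{DM} in Step~3 of the proof of Lemma~\ref{>2}), part (2) therefore admits the further exceptions $A_6$, $S_6$ and $M_{12}$. This is an oversight of the paper as much as of your proposal, but a complete proof cannot simply ``confirm'' these cases away: it must either add them to the list of exceptions in the statement or verify separately that they do not disturb the arguments in which the lemma is invoked.
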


The following result about $G$-arc-transitive bicirculants can be found in 
Devillers et al.~\cite{DGJ}. The proof works also for the edge-transitive 
bicirculants, in fact, it is an easy consequence of Theorem~\ref{M}. 

\begin{prop}\label{DGJ}
{\rm (\cite[part (1) of Proposition~4.2]{DGJ})}
Let $\G$ be a $G$-edge-transitive bicirculant such that $G$ is a primitive 
group. Then $\G$ is one of the following graphs:
\begin{enumerate}[{\rm (1)}] 
\item The complete graph, and $G$ is one of the $2$-transitive groups 
described in part (2) of Theorem~\ref{M}.
\item The Petersen graph or its complement, and $A_5 \le G \le S_5$.
\item The Hamming graph $H(2,4)$ or its complement, and $G$ is a rank $3$ subgroup of 
$\AGL(4,2)$.
\item The Clebsch graph or its complement, and $G$ is a rank $3$ subgroup of 
$\AGL(4,2)$. 
\end{enumerate}
\end{prop}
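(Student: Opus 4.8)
The plan is to run the primitive group $G$ through M\"uller's classification (Theorem~\ref{M}) and then, in each surviving case, read off the $G$-edge-transitive graphs directly from the subdegrees. Write $2n=|V(\G)|$, and let $\rho\in G$ be the semiregular automorphism exhibiting the bicirculant structure; it has two orbits of length $n$, so Theorem~\ref{M} applies to $G$. Since $G$ is vertex-transitive and $\G$ is $G$-edge-transitive, $E(\G)$ is a single $G$-orbit on $2$-subsets, so $\G$ is an orbital graph of $G$ and its valency is a subdegree (for the groups occurring below the relevant orbitals are all self-paired, so $\G$ will in fact be $G$-arc-transitive). Theorem~\ref{M} now splits the argument into the almost simple case (its part~(2)) and the affine case (its part~(1)).

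For the almost simple case I would appeal to Lemma~\ref{M-p2&3}(1). If $G$ is not in family~(b), then $G$ is $2$-transitive, its only nontrivial orbital is the complete one, and $\G=K_{2n}$, which is conclusion~(1). If $G$ is in family~(b), then $n=5$ and $A_5\le G\le S_5$ acts on the $2$-subsets of $\{1,2,3,4,5\}$ with subdegrees $1,3,6$, so $G$ has rank~$3$. Its two orbital graphs are the graph on the disjoint pairs, namely the Petersen graph (valency~$3$), and the graph on the intersecting pairs, its complement the triangular graph $T(5)$ (valency~$6$); this is conclusion~(2).

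For the affine case $\Z_2^m\lhd G\le\AGL(m,2)$, so $\G\cong\cay(\Z_2^m,S)$ for some $S\subseteq\Z_2^m\setminus\{0\}$, and $G$-edge-transitivity is equivalent to $S$ being a single orbit of the linear stabiliser $G_0\le\GL(m,2)$ (as $-x=x$ in $\Z_2^m$, any such graph is automatically $G$-arc-transitive). In families~(a)--(c) the stabiliser $G_0$ is $\GL(2,2)$, $\GL(1,4)$ or $\GL(3,2)$, each transitive on the nonzero vectors, forcing $S=\Z_2^m\setminus\{0\}$ and $\G=K_{2^m}$ (conclusion~(1)). The substantive case is family~(d), where $m=4$ and $|V(\G)|=16$; here I would run through the eleven candidates for $G_0$ and compute their orbits on the $15$ nonzero vectors. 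Seven of them, namely $\GL(4,2)$, $A_7$, $S_6$, $A_6$, $\GaL(1,16)$, $\SiL(2,4)$ and $\GaL(2,4)$, act transitively, giving $\G=K_{16}$ (conclusion~(1)); the remaining four, $S_5$, $\Z_5\rtimes\Z_4$, $(S_3\times S_3)\rtimes\Z_2$ and $(\Z_3\times\Z_3)\rtimes\Z_4$, each have exactly two nontrivial orbits, of lengths $\{5,10\}$ or $\{6,9\}$. Thus $G$ has rank~$3$ and its two orbital graphs are strongly regular on $16$ vertices; a short feasibility computation then shows the valency-$5$ graphs are the Clebsch graph and its complement (conclusion~(4)) and the valency-$6$ graphs are $H(2,4)$ and its complement (conclusion~(3)).

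I expect the affine degree-$16$ step to be the main obstacle. Determining the orbit lengths of the eleven stabilisers is routine but needs care; for instance, identifying $\Z_2^4$ with $\mathbb{F}_{16}$ shows that $\Z_5\rtimes\Z_4$ has orbits of lengths $5$ and $10$, the Frobenius element fixing the coset of $5$th roots of unity and interchanging the other two cosets of $\Z_5$ in $\mathbb{F}_{16}^{*}$. The genuinely delicate point is the identification of the valency-$6$ strongly regular graph: both $H(2,4)$ and the Shrikhande graph have parameters $(16,6,2,2)$, so I must argue that only $H(2,4)$ arises. This follows because our graph is an orbital graph of a rank~$3$ group, hence a rank~$3$ graph, whereas the Shrikhande graph is not: its automorphism group has order $192$, which cannot act transitively on the $72$ non-edges since $72\nmid 192$. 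The analogous feasibility check at valency~$5$ shows that $(16,5,0,2)$ is the only admissible parameter set, confirming the Clebsch graph.
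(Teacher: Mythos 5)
Your proposal is correct and follows exactly the route the paper itself indicates: the paper gives no proof of Proposition~\ref{DGJ} (it is quoted from Devillers et al.\ with the remark that it is an easy consequence of Theorem~\ref{M}), and your argument is precisely that deduction, with the case analysis of the eleven degree-$16$ affine stabilisers, the self-pairedness of the relevant orbitals, and the rank-$3$ exclusion of the Shrikhande graph via $72\nmid 192$ all checking out. The only blemish is in the bookkeeping, not the mathematics: the complete graphs you obtain from the affine cases with transitive $G_0$ do not literally fit the wording of conclusion~(1) as quoted (which attributes $G$ to part~(2) of Theorem~\ref{M}), but that is an imprecision of the quoted statement rather than a gap in your proof.
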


Using the computational result that there exists no edge-transitive graph in $\F(d)$ 
with $7 \le d \le 10$ and of order at most $100$, one can easily deduce 
which of the graphs in the families (1)--(4) above 
belongs also to the family $\F(d)$ for some $d \ge 3$. 

\begin{cor}\label{cor-DGJ}
Let $\G \in \F(d)$ be a $G$-edge-transitive graph for some $d \ge 3$. 
If $G$ is primitive on $V(\G)$, then $\G$ is isomorphic to one of the graphs: 
$K_6$, the Petersen graph and its complement, and the Hamming graph $H(2,4)$.  
\end{cor}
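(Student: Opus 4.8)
The plan is to run $\G$ through Proposition~\ref{DGJ} and then decide, family by family, which of the resulting graphs actually lie in some $\F(d)$. Since every graph in $\F(d)$ is in particular a bicirculant, $\G$ is a $G$-edge-transitive bicirculant with $G$ primitive, so Proposition~\ref{DGJ} leaves exactly four possibilities: a complete graph; the Petersen graph or its complement; the Hamming graph $H(2,4)$ or its complement; the Clebsch graph or its complement. The point is that membership in $\F(d)$ carries one extra piece of information beyond being a bicirculant, namely that the defining automorphism, a product of two $8$-cycles, has at least one orbit inducing a single cycle. I would exploit this, together with the quoted computational fact that no edge-transitive graph of order at most $100$ lies in $\F(d)$ for $7 \le d \le 10$, to prune the list.

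First the easy families. For a complete graph $K_{2m}$ the subgraph induced on an orbit is $K_m$, which is a cycle only when $m=3$; hence the only survivor here is $K_6 \in \F(5)$. The Petersen graph is the generalised Petersen graph $\mathrm{GP}(5,2) \in \F(3)$ and its complement is $\nest(5;1,2,3;2) \in \F(6)$, so both remain. For the Hamming graph, $H(2,4)=\nest(8;1,3,4;3) \in \F(6)$ remains, whereas its complement has valency $9$ and order $16$; were it in some $\F(d)$ we would need $d=9$, and since it is edge-transitive (it is a rank $3$ graph, hence arc-transitive) this contradicts the computational fact. So the complement of $H(2,4)$ is discarded.

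The Clebsch family is the main obstacle. Its two graphs have valencies $5$ and $10$. The valency-$10$ graph is edge-transitive of order $16$, so $d=10 \in [7,10]$ excludes it just as above. The real difficulty is the valency-$5$ Clebsch graph, whose valency lies outside $[7,10]$, so the computational fact does not reach it; I would dispose of it by a direct argument. Write the Clebsch graph as $\cay(\Z_2^4,S)$ with $S=\{e_1,e_2,e_3,e_4,e_1+e_2+e_3+e_4\}$, so that $\aut(\G)=\Z_2^4 \rtimes S_5$, with $\Z_2^4$ the regular translation subgroup and $S_5$ permuting the five ``coordinate directions'' comprising $S$. A bicirculant automorphism $\rho$ is a product of two $8$-cycles, hence of order $8$; since $S_5$ has no element of order $8$, the image of $\rho$ in $S_5$ is a $2$-element, and a short order computation rules out the identity and the involutions, forcing $\rho=t_a\sigma$ with $\sigma$ a $4$-cycle. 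Then $\rho^4=t_c$ with $c=(1+\sigma+\sigma^2+\sigma^3)a$, and in this representation $1+\sigma+\sigma^2+\sigma^3$ maps every vector into $\{0,e_\sigma\}$, where $e_\sigma \in S$ is the direction fixed by $\sigma$; as $\rho$ has order $8$ we must have $c=e_\sigma \ne 0$. Hence $v$ and $\rho^4(v)=v+c$ are adjacent for every vertex $v$, so the circulant on $\Z_8$ induced on each $\rho$-orbit contains the ``step $4$'' edges. But a $2$-regular circulant on $\Z_8$ has connection set $\{\pm1\}$, $\{\pm2\}$ or $\{\pm3\}$, none of which contains $4$; therefore neither orbit induces a cycle, and the Clebsch graph is not in $\F(5)$.

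Assembling the four families leaves precisely $K_6$, the Petersen graph and its complement, and $H(2,4)$, which is the assertion. The only step that needs genuine work is the exclusion of the valency-$5$ Clebsch graph; alternatively one could cite the classification of edge-transitive Taba\v{c}jn graphs in~\cite{AHKOS}, in which the Clebsch graph does not occur.
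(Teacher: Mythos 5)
Your proposal is correct, and its skeleton is exactly the paper's: the paper offers no written proof beyond the sentence preceding the corollary, namely ``feed $\G$ into Proposition~\ref{DGJ} and use the computational fact for $7\le d\le 10$ to discard the complements of $H(2,4)$ and of the Clebsch graph.'' The one case that genuinely needs work, and which the quoted computational fact cannot reach, is the valency-$5$ Clebsch graph; the paper silently leans on the classification of edge-transitive Taba\v{c}jn graphs in \cite{AHKOS} (where the Clebsch graph does not occur), whereas you supply a self-contained exclusion. I checked your argument: in $\aut(\G)=\Z_2^4\rtimes S_5$ every element of order $8$ has the form $t_a\sigma$ with $\sigma$ a $4$-cycle, the image of $1+\sigma+\sigma^2+\sigma^3$ lies in the fixed space $\{0,e_\sigma\}$ of $\sigma$, and the order-$8$ condition forces $\rho^4=t_{e_\sigma}$ with $e_\sigma\in S$; hence each length-$8$ orbit carries the step-$4$ matching and cannot induce $C_8$. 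This is sound, and it is the only piece of genuine added content relative to the paper; both routes (your direct computation and the citation of \cite{AHKOS}) are valid, the former being self-contained and the latter shorter.
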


Finally, we also need a result of Lucchini~\cite{Lu} about core-free cyclic subgroups 
(this serves as a key tool in \cite{KKM,AHKOS} as well). 
For the definition of a core-free subgroup, see the remark following 
Definition~\ref{core-free}. 

\begin{thm}\label{L}
{\rm (\cite{Lu})}
If $C$ is a core-free cyclic proper subgroup of a group $G$, then 
$|C|^2 < |G|$.
\end{thm}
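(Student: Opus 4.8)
Since the statement is Lucchini's theorem, I frame the plan around its permutation-theoretic content: the claim is exactly that a transitive group with a cyclic point-stabiliser has degree strictly larger than the stabiliser order.

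The first step is to pass to the coset action. Writing $m=|C|$ and $n=|G:C|$, the hypothesis that $C$ is core-free says precisely that $G$ acts faithfully and transitively on $\Omega=\{Cg:g\in G\}$ with point-stabiliser $G_C=C$, so that $|G|=mn$ and the desired inequality $|C|^2<|G|$ becomes the assertion $m<n$. I would then prove the one clean technical ingredient, a fixed-point count for a generator. Fix a generator $x$ of $C$; in the coset action the stabiliser of $Cg$ is $C^g=g^{-1}Cg$, so $x$ fixes $Cg$ if and only if $gxg^{-1}\in C$. As $gxg^{-1}$ has order $m=|C|$ and $C$ is cyclic, it must be a generator of $C$, whence $g\la x\ra g^{-1}=C$, i.e.\ $g\in N_G(C)$; the converse is immediate. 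Thus $x$ fixes exactly $|N_G(C):C|$ points, and the same holds for every generator of $C$. This pins down the cycle type of a generator of the stabiliser and is the lever against which the counting is done.

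With this in hand I would run a minimal-counterexample argument, taking $G$ of least order admitting a core-free cyclic $C$ with $m\ge n$, and reduce to the case that the coset action is primitive (equivalently, that $C$ is maximal). The primitive cases I would then split according to the O'Nan--Scott classification. In the affine case $\Z_2^k\lhd G\le \AGL(k,q)$ with $n=q^k$ and $G_0\le\GL(k,q)$ cyclic and acting irreducibly, so $G_0$ lies in a Singer cycle and $m=|G_0|\le q^k-1<q^k=n$, giving the conclusion directly; the almost simple case would be settled by the known bounds on the orders of cyclic subgroups of a simple group relative to its minimal faithful permutation degree (this is where the classification of finite simple groups enters), and the remaining O'Nan--Scott types are excluded by the same degree/order comparison.

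The main obstacle is the reduction to primitivity, because the cyclic point-stabiliser hypothesis does not descend cleanly to the action on a block system or to a quotient: an overgroup of $C$ (such as a block stabiliser) need not be cyclic, and core-freeness in $G$ does not transfer to the induced actions. A purely elementary route is blocked as well, since the natural double count of pairs $(\beta,g)$ with $1\ne g\in G_\beta$ collapses to the identity $n(m-1)=|N_G(C):C|\cdot|G:N_G(C)|\cdot(m-1)$ and yields no inequality; genuine structural input is therefore unavoidable. Overcoming the descent problem — by choosing the imprimitivity system adapted to a prime power dividing $m$ and controlling the action of the corresponding (cyclic) Sylow subgroup, so that induction can be applied to a proper section while retaining a cyclic stabiliser — is the delicate point on which a complete proof turns.
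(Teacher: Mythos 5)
This statement is an imported result: the paper cites it from Lucchini's note \cite{Lu} and offers no proof of its own, so there is no internal argument to measure your attempt against. Judged on its own terms, your proposal is an outline rather than a proof, and the decisive step is missing. You correctly translate the claim into the assertion that a faithful transitive action with cyclic point stabiliser $C$ has degree $n=|G:C|>|C|$, and your fixed-point count for a generator of $C$ is accurate; but, as you yourself observe, that count feeds into a double count that ``collapses'' and yields no inequality, so it does no work in the rest of the plan. The argument then hinges entirely on reducing a minimal counterexample to the primitive case, and this is exactly where the proposal breaks down: if $C<H<G$ with $H$ maximal, the action on cosets of $H$ has point stabiliser $H$, which need not be cyclic, while in the action of $H$ on cosets of $C$ the subgroup $C$ need not be core-free (and even granting $\core_H(C)=L$ and applying minimality to $H/L$, one only gets $|C|^2<|H|\,|L|\le |H|\,|C|$, hence $|C|<|H|$, which is weaker than what is needed). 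Your proposed repair --- choosing the block system ``adapted to a prime power dividing $m$'' --- is named but not executed, and it is not evident that it closes the gap.

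Two further points. First, even in the primitive case only the affine type is genuinely argued (and there the socle should be $\Z_p^k$ for a prime $p$, not $\Z_2^k$; the Singer-cycle bound $|C|\le p^k-1<p^k$ is then correct). The almost simple case and the remaining O'Nan--Scott types are dispatched by appeal to unspecified ``known bounds,'' which is not a proof. Second, for orientation: Lucchini's actual argument does not pass through primitivity of the coset action at all; it runs an induction on $|G|$ through quotients by minimal normal subgroups, reducing to the situation $G=CN$ with $N$ the unique minimal normal subgroup, and then treats the abelian and non-abelian cases of $N$ separately, with the classification of finite simple groups entering in the latter. Since the theorem is used here purely as a black box, the appropriate course in this paper is to cite it, as the author does; if you want to supply a proof, the monolithic-reduction route is the one to pursue rather than the reduction to primitive permutation groups.
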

\section{Nest graphs}\label{sec:Nest}
In this section we review some previous results about Nest graphs, which were obtained in \cite{JMSV,KR}. 

\begin{lem}\label{JMSV1}
{\rm (\cite[Lemma~4]{JMSV})}
Let $\G=\nest(n;a,b,c;k)$ and suppose that $c=a+b$ (in $\Z_n$). 
Then $\G$ is edge-transitive if and only if it is also 
arc-transitive.
\end{lem}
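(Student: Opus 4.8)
The forward (``only if'') implication is immediate, since every arc-transitive graph is edge-transitive. All the content lies in the reverse implication, and the plan is to reduce it to a single edge-reversal. The key principle is the standard fact that a graph $\G$ is arc-transitive precisely when it is edge-transitive and some automorphism interchanges the two endpoints of at least one edge: given two arcs $(x,y)$ and $(x',y')$, edge-transitivity supplies $g \in \aut(\G)$ with $g\{x,y\}=\{x',y'\}$, so $g$ carries $(x,y)$ to $(x',y')$ or to $(y',x')$; in the latter case one post-composes with a conjugate of the edge-reversing automorphism that flips $\{x',y'\}$ (such a conjugate exists by edge-transitivity), landing on $(x',y')$. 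Hence it suffices to produce, under the hypothesis $c=a+b$, one automorphism of $\G$ reversing a single edge.

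To find it, I would reflect the rim about the edge $\{u_0,u_1\}$ and search for a compatible action on the hub. Concretely, I propose the map $\sigma$ given by $\sigma(u_i)=u_{1-i}$ and $\sigma(v_j)=v_{1+c-j}$ for all $i,j \in \Z_n$; this is a bijection of $V(\G)$ (in fact an involution) interchanging $u_0$ and $u_1$. The point of the hub translation $s=1+c$ is that it sends the four spoke-neighbours $v_i,v_{i+a},v_{i+b},v_{i+c}$ of $u_i$ onto the four spoke-neighbours of $u_{1-i}$. This is exactly where $c=a+b$ enters: that identity makes $\{0,a,b,c\}$ invariant under $x \mapsto c-x$ (it swaps $0 \leftrightarrow c$ and $a \leftrightarrow b$), so that $\{\,s-i-t : t \in \{0,a,b,c\}\,\}=\{\,(1-i)+t : t\in\{0,a,b,c\}\,\}$.

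The remaining work is the routine verification that $\sigma$ preserves each of the three edge types. Rim edges map to rim edges since $i \mapsto 1-i$ reverses the rim cycle; a hub edge $\{v_j,v_{j+k}\}$ maps to $\{v_{1+c-j-k},v_{1+c-j}\}$, again of hub type; and the spoke computation above, using $c-a=b$ and $c-b=a$, shows each spoke $\{u_i,v_{i+t}\}$ goes to a spoke at $u_{1-i}$. As $\sigma$ is a vertex-bijection carrying $E(\G)$ into $E(\G)$ on a finite graph, it lies in $\aut(\G)$, and it reverses the rim edge $\{u_0,u_1\}$. Feeding this into the reduction of the first paragraph yields arc-transitivity. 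The only genuinely delicate point is guessing the correct hub translation $s=1+c$; once the symmetry of $\{0,a,b,c\}$ under $x\mapsto c-x$ furnished by $c=a+b$ is spotted, the rest is bookkeeping.
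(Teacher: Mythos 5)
The paper does not prove this lemma itself; it is imported verbatim from \cite[Lemma~4]{JMSV}, so there is no internal proof to compare against. Your argument is correct and self-contained: the reduction of arc-transitivity to ``edge-transitivity plus one reversed edge'' is the standard one, and the map $\sigma(u_i)=u_{1-i}$, $\sigma(v_j)=v_{1+c-j}$ does preserve all three edge types --- the spoke check reduces to the invariance of $\{0,a,b,c\}$ under $t\mapsto c-t$, which is precisely where $c=a+b$ enters --- while reversing the rim edge $\{u_0,u_1\}$. This reflection-type automorphism is essentially what one expects the cited source to exhibit, so the route is the natural one rather than a genuinely different alternative.
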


The next result establishes some obvious isomorphisms.  

\begin{lem}\label{JMSV2}
{\rm (\cite[Lemma~5]{JMSV})}
The graph $\nest(n;a,b,c;k)$ is isomorphic to  
$\nest(n;a',b',c';k)$, where $\{a,b,c\}=\{a',b',c'\}$, as well as to any of 
the graphs:  
$$
\nest(n;a,b,c;-k),~\nest(n;-a,-b,-c;k)~\text{and}~\nest(n;-a,b-a,c-a;k).
$$ 
\end{lem}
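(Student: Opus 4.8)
The plan is to prove all four isomorphisms by exhibiting an explicit bijection $\phi$ of the vertex set and checking that it carries each of the three edge types of the source graph into an edge of the target graph of the same or corresponding type. The verification is clean once one records the observation that every graph in the statement is $6$-regular on $2n$ vertices, hence has exactly $6n$ edges (the offsets $0,a,b,c$ being pairwise distinct, and $k\ne n/2$ ruling out coincidences among the rim, hub and spoke edges). Consequently, any vertex bijection that maps every edge of the source to an edge of the target is automatically a graph isomorphism: a vertex bijection sends the $6n$ source edges injectively to $6n$ distinct target edges, and the target has only $6n$ edges, so the correspondence on edges is onto as well. This reduces each claim to checking adjacency in one direction only.

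For the first two isomorphisms I would simply observe that the two graphs coincide as labelled graphs, so the identity map works. Reordering $a,b,c$ does not alter the set $\{0,a,b,c\}$ of spoke offsets, hence leaves the spoke edge set (and the rim and hub edges) unchanged; this gives $\nest(n;a,b,c;k)=\nest(n;a',b',c';k)$ whenever $\{a,b,c\}=\{a',b',c'\}$. Likewise, as $i$ ranges over $\Z_n$ the hub edges $\{v_i,v_{i+k}\}$ and $\{v_i,v_{i-k}\}$ enumerate the same set of pairs (replace $i$ by $i-k$), so $\nest(n;a,b,c;k)=\nest(n;a,b,c;-k)$, again via the identity.

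The third isomorphism I would realise through the ``reflection'' $\phi\colon u_i\mapsto u_{-i},\ v_i\mapsto v_{-i}$. A rim edge $\{u_i,u_{i+1}\}$ maps to $\{u_{-i},u_{-i-1}\}$, a rim edge; a hub edge $\{v_i,v_{i+k}\}$ maps to $\{v_{-i},v_{-i-k}\}=\{v_{-i-k},v_{-i}\}$, a hub edge with the same $k$; and a spoke $\{u_i,v_{i+s}\}$ with $s\in\{0,a,b,c\}$ maps to $\{u_{-i},v_{-i-s}\}$, which is a spoke of $\nest(n;-a,-b,-c;k)$ since its offset set is $\{0,-a,-b,-c\}$. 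The fourth isomorphism is the only one requiring genuine bookkeeping, and is where I would be most careful: I would use the shift $\phi\colon u_i\mapsto u_i,\ v_j\mapsto v_{j-a}$. Rim edges are fixed; a hub edge $\{v_i,v_{i+k}\}$ maps to $\{v_{i-a},v_{i+k-a}\}$, again a hub edge with the same $k$; and a spoke with offset $s\in\{0,a,b,c\}$ maps to a pair with offset $s-a$, and $\{0,a,b,c\}-a=\{-a,0,b-a,c-a\}$, which is exactly the spoke offset set of $\nest(n;-a,b-a,c-a;k)$. The single point to get right here is the direction of the shift (it must be $v_j\mapsto v_{j-a}$, not $v_j\mapsto v_{j+a}$) so that the offset $a$ is sent to the base spoke $0$ rather than to $2a$; with that fixed, all four isomorphisms follow.
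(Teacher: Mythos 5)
Your proof is correct. Note that the paper does not prove this lemma at all --- it is quoted from \cite[Lemma~5]{JMSV} --- so there is no internal argument to compare against; but the four explicit maps you give (the identity for the reordering of $\{a,b,c\}$ and for $k\mapsto -k$, the reflection $u_i\mapsto u_{-i}$, $v_i\mapsto v_{-i}$ for the negation of the spoke offsets, and the shift $v_j\mapsto v_{j-a}$ for the last graph) are exactly the standard isomorphisms one expects here, and your preliminary edge-count observation ($6n$ edges on each side, so an edge-preserving vertex bijection is automatically an isomorphism) correctly disposes of the need to check the inverse direction. The one subtle point, the direction of the shift in the fourth case, is handled correctly: $\{0,a,b,c\}-a=\{-a,0,b-a,c-a\}$ matches the spoke offsets of $\nest(n;-a,b-a,c-a;k)$.
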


The graphs in the next lemma will be further studied in the next section.

\begin{lem}\label{JMSV3}
{\rm (\cite[Lemma~6]{JMSV})}
If $m \ge 3$ is an odd integer, then the graph 
$\nest(2m;2,m,2+m;1)$ is arc-transitive having vertex stabilisers of order $12$.
Furthermore, the stabiliser of $u_0$ in $\aut(\G)$ is the dihedral group $D_{6}$ of 
order $12$ generated by the involutions $\varphi$ and $\eta$ defined by 
$$
u_i^{\varphi}=\begin{cases} u_{-i} & \text{if $i$ is even}, \\ 
v_{-i+1} & \text{if $i$ is odd}\end{cases}\quad \text{and} \quad 
v_i^{\varphi}=\begin{cases} u_{-i+1} & \text{if $i$ is even}, \\ 
v_{-i+2} & \text{if $i$ is odd},\end{cases}
$$
and $u_i^\eta=u_i$ and $v_i^\eta=v_{i+m}$ for every $i \in \Z_n$.
\end{lem}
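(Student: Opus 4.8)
The plan is to verify Lemma~\ref{JMSV3} directly: first confirm that $\varphi$ and $\eta$ are genuine automorphisms of $\G=\nest(2m;2,m,2+m;1)$, then show that the group $\la\varphi,\eta\ra$ they generate is isomorphic to $D_6$ and fixes $u_0$, and finally deduce arc-transitivity together with the stabiliser order. Since $\rho$ acts transitively on each of the two orbits $\{u_i\}$ and $\{v_i\}$, and the graph will turn out to be edge-transitive (indeed arc-transitive), it suffices to understand the local action at a single vertex $u_0$, which is exactly what $\varphi$ and $\eta$ provide.

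First I would check that $\eta$ is an automorphism. Since $\eta$ fixes every $u_i$ and sends $v_i\mapsto v_{i+m}$, it clearly preserves the rim edges $\{u_i,u_{i+1}\}$. For the hub edges $\{v_i,v_{i+k}\}$ with $k=1$, note $\eta$ translates both endpoints by $m$, so hub edges map to hub edges. For the spoke edges, the four neighbours of $u_i$ among the $v$'s are $v_{i},v_{i+2},v_{i+m},v_{i+2+m}$ (using $a=2$, $b=m$, $c=2+m$); adding $m$ to each subscript permutes this set (because $v_{i+m+m}=v_i$ as $2m=0$), so the spoke neighbourhood of each $u_i$ is preserved setwise. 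This is a routine finite check that I would lay out case by case. For $\varphi$ I would similarly verify that each of the three edge types is respected, using the parity-dependent definition; the even/odd case split in the formula is precisely what is needed to match rim edges to spoke edges and vice versa, which is the mechanism producing the extra symmetry beyond $\la\rho,\eta\ra$. I would also confirm $\varphi^2=\id$ and $\eta^2=\id$ by direct substitution, and that both fix $u_0$ (indeed $u_0^\varphi=u_0$ since $0$ is even, and $u_0^\eta=u_0$).

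Next I would pin down the group structure. Having $\varphi^2=\eta^2=\id$, it remains to compute the order of $\varphi\eta$; I expect $(\varphi\eta)^3=\id$, which together with the two involutions yields a dihedral group of order $12$, i.e.\ $D_6$. I would verify this by tracking the image of a couple of generic vertices under $\varphi\eta$ and checking periodicity, again a short finite computation. To conclude that $\aut(\G)_{u_0}$ equals this $D_6$ and has order exactly $12$ (not merely contains it), I would examine the induced action on the six neighbours $\G(u_0)=\{u_1,u_{-1},v_0,v_2,v_m,v_{2+m}\}$: the stabiliser $\aut(\G)_{u_0}$ acts faithfully enough on this neighbourhood that its order is bounded, and I would show the $12$ elements of $\la\varphi,\eta\ra$ already realise all the local symmetries that extend to global automorphisms. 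Combined with $|\{u_i\}|=2m$ and the fact that $\rho$ moves $u_0$ around its orbit while $\varphi$ swaps the two $\rho$-orbits, the orbit--stabiliser count gives a vertex-transitive, hence arc-transitive, action.

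The main obstacle I anticipate is the \emph{upper} bound on the stabiliser order, i.e.\ showing $|\aut(\G)_{u_0}|=12$ rather than just $\ge 12$. Exhibiting $\varphi$ and $\eta$ is mechanical, but proving that no further automorphism fixes $u_0$ requires controlling the full automorphism group, which is genuinely harder. I would handle this by analysing the rigidity of the local structure: since the subgraph induced by $\{u_i\}$ is the rim cycle of length $2m$, any automorphism fixing $u_0$ must either fix or reflect this cycle, and then its action on the spokes and hub is heavily constrained by the specific connection set $\{2,m,2+m\}$ and $k=1$. Tracing these constraints forces the stabiliser into $\la\varphi,\eta\ra$. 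Because this part depends on the arithmetic of $\Z_{2m}$ with $m$ odd, I would make repeated use of the oddness of $m$ (so that $2$ is invertible and $m$ is the unique element of order $2$) to rule out spurious symmetries, and this is where the bulk of the careful casework lies.
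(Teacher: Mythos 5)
This lemma is quoted in the paper from \cite[Lemma~6]{JMSV} and is not proved there, so there is no internal proof to compare against; I can only assess your argument on its own terms. The first half of your plan (verifying that $\varphi$ and $\eta$ are automorphisms fixing $u_0$, that both are involutions, and computing the order of $\varphi\eta$) is routine and sound, apart from one arithmetic slip: two involutions whose product has order $3$ generate a group of order $6$, not $12$. For $\la\varphi,\eta\ra\cong D_6$ of order $12$ you need $\varphi\eta$ to have order $6$, and indeed a direct computation on $\G(u_0)=\{u_1,u_{-1},v_0,v_2,v_m,v_{2+m}\}$ shows that $\varphi\eta$ restricts there to a $6$-cycle, so your stated expectation $(\varphi\eta)^3=\id$ is false and would have to be corrected before the group-order count works.

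The genuine gap is in your strategy for the upper bound $|\aut(\G)_{u_0}|\le 12$, which you yourself identify as the bulk of the work. You propose to argue that ``any automorphism fixing $u_0$ must either fix or reflect the rim cycle,'' but this presupposes that the set $\{u_i : i\in\Z_n\}$ is invariant under the stabiliser of $u_0$, and it is not: your own $\varphi$ sends $u_1$ to $v_0$, so it maps the rim cycle to a cycle alternating between $u$- and $v$-vertices. This is precisely the feature that makes the lemma nontrivial (and makes these graphs relevant to the core-free analysis), so the rigidity argument as sketched collapses at its first step. A workable route must instead analyse the action of $\aut(\G)_{u_0}$ directly on $\G(u_0)$ without assuming rim-invariance --- for instance by showing that the numbers $|\G(w)\cap\G(w')|$ for $w,w'\in\G(u_0)$ endow the neighbourhood with an invariant hexagonal structure whose automorphism group is $D_6$, and separately proving that an automorphism fixing $u_0$ together with all of $\G(u_0)$ pointwise is the identity (faithfulness of the local action), which you assert only as ``faithfully enough'' without argument. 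Both of these steps need to be supplied before the claimed equality $\aut(\G)_{u_0}=\la\varphi,\eta\ra$ is established.
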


Suppose that $\G$ is a $G$-edge-transitive Nest graph. 
In the next two lemmas we consider block systems for $G$. 
A block system $\B$ is said to be {\em minimal} 
if it is non-trivial, and no non-trivial block for $G$ is contained properly in a block 
of $\B$ (by {\em non-trivial} we mean that the block is neither a singleton 
subset nor the whole vertex set). We say that $\B$ is {\em normal} if 
$\B=\orb(N,V(\G))$ for some $N \lhd G$. Furthermore, we say that $\B$ is {\em cyclic} if any 
block in $\B$ is contained in either $\{ u_i : i \in \Z_n\}$ or 
$\{ v_i : i \in \Z_n\}$.  

\begin{lem}\label{KR1}
{\rm (\cite[Lemma~4.1]{KR})} 
Let $\G$ be a $G$-edge-transitive Nest graph of order $2n$ such that 
$$
C < G,~C=\la \rho \ra,~\text{and}~  
\rho=(u_0,u_1,\ldots,u_{n-1})(v_0,v_1,\ldots,v_{n-1}),
$$ 
and let $\B$ be a cyclic 
block system for $G$ with blocks of size $d$, $d < n/2$.  
Then the following hold.
\begin{enumerate}[{\rm (1)}]
\item The kernel of the action of $G$ on $\B$ is equal to $C_d$ (the subgroup of $C$ of order $d$). 
\item $\G$ is a normal cover of $\G/\B$.
\item $\G/\B$ is a $\bar{G}$-edge-transitive Nest graph of order $2n/d$, where 
$\bar{G}$ is the image of $G$ induced by its action on $\B$. 
\end{enumerate}
\end{lem}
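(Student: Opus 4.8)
The plan is to exploit the cyclic structure of $\B$ together with the general principle, recalled in Section~\ref{sec:known}, that an intransitive normal subgroup of an edge-transitive automorphism group produces a normal $r$-cover. Throughout write $C=\la\rho\ra$, let $K\lhd G$ denote the kernel of the action of $G$ on $\B$, and prove the three assertions in a convenient order. I would first identify $\B$ explicitly. Since $C\le G$ preserves $\B$ and fixes each of the two $C$-orbits $\{u_i\}$ and $\{v_i\}$ setwise, $C$ permutes among themselves the blocks lying inside $\{u_i\}$; these form a $C$-invariant partition of $\{u_i\}$ into parts of size $d$. As $C$ acts regularly on $\{u_i\}$ (as $\Z_n$ on itself), every $C$-invariant partition is the coset partition of a subgroup of $C$, and the one with parts of size $d$ comes from the unique subgroup of order $d$, namely $C_d$. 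The same holds on $\{v_i\}$, so $\B=\orb(C_d,V(\G))$. In particular $C_d$ fixes every block, i.e. $C_d\le K$; and since $K$ fixes every block setwise while $C_d\le K$ already realises the blocks as orbits, the $K$-orbits coincide with $\B$. Thus $\G/K=\G/\B$, and $K$ is intransitive (each $C$-orbit splits into $n/d\ge 3$ blocks because $d<n/2$).

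For part (2) I would use that $\G$ is $G$-edge-transitive and $K\lhd G$ is intransitive, so by the principle in Section~\ref{sec:known} $\G$ is a normal $r$-cover of $\G/K=\G/\B$ for some $r\mid 6$; in particular the blocks are independent sets and $r=|\G(w)\cap\pi(w')|$ takes the same value on every edge $\{w,w'\}$. I would evaluate this constant on a rim edge $\{u_i,u_{i+1}\}$: the only $u$-neighbours of $u_i$ are $u_{i-1}$ and $u_{i+1}$, and since the $u$-blocks are the residue classes modulo $n/d$ with $n/d\ge 3$, the vertex $u_{i-1}$ lies in a block different from that of $u_{i+1}$. Hence $u_i$ has exactly one neighbour in $\pi(u_{i+1})$, so $r=1$ and $\G$ is a normal cover of $\G/\B$. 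This is precisely where the hypothesis $d<n/2$ enters: for $d=n/2$ one would instead find $r=2$.

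For part (1) I would now use that $\G$ is a connected normal $1$-cover of $\G/\B$ with kernel $K$, whence $K$ acts freely on each fibre. Indeed, if $h\in K$ fixed a vertex $w$, then for each neighbour $w_1$ of $w$ (necessarily in an adjacent block, as blocks are independent) the $1$-cover property says $w$ has a unique neighbour in $\pi(w_1)$, which $h$ must fix since $h$ fixes $w$ and preserves $\pi(w_1)$; propagating along edges and using connectedness of $\G$ gives $h=\id$. Consequently $|K|$ equals the fibre size $d$, and together with $C_d\le K$ and $|C_d|=d$ this forces $K=C_d$. For part (3), $\bar G=G/K$ acts faithfully and edge-transitively on $\G/\B$, because by construction every edge of $\G/\B$ is the image of an edge of $\G$ and edge-transitivity descends through this surjection of edge sets. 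The image $\bar\rho$ of $\rho$ is an automorphism of $\G/\B$ of order $n/d$ whose two orbits are the sets of $u$- and $v$-blocks, each of length $n/d$, and the rim cycle of $\G$ projects onto the $(n/d)$-cycle on the $u$-blocks. Since a $1$-cover preserves valency, $\G/\B$ is $6$-regular of order $2n/d$ and carries such an automorphism, so it lies in $\F(6)$ and is therefore a Nest graph; here $2n/d\ge 8$ automatically, since no $6$-regular simple graph has only $6$ vertices.

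The step I expect to require the most care is pinning down the kernel in part (1). The inclusion $C_d\le K$ is immediate, but the reverse rests on two ingredients that must be secured first: that the cover is genuinely a $1$-cover (which is exactly where $d<n/2$ is used, via the rim-edge computation), and that the deck action on a \emph{connected} cover is free. Both are routine once set up correctly, but the freeness argument silently disposes of the possibility of ``twin'' hub vertices with identical spoke-neighbourhoods, which is the only way a larger kernel could arise; for this reason I would keep connectedness of $\G$ explicit throughout, as it is what drives the propagation in the freeness argument.
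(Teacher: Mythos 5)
Your proof is correct. Note that this paper does not prove Lemma~\ref{KR1} itself but imports it from \cite[Lemma~4.1]{KR}, so there is no in-text proof to compare against; your argument (identify $\B$ as $\orb(C_d,V(\G))$ via the regular action of $C$ on each of its orbits, get $r=1$ from the rim edge using $d<n/2$, deduce semiregularity of the kernel from connectedness, and project $\rho$ and the rim cycle to exhibit $\G/\B$ as a member of $\F(6)$) is the standard and expected route, and the order in which you establish the claims, proving (2) before (1), is sound.
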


\begin{rem}
Suppose that the graph $\G$ in the lemma above is given as 
$\G=\nest(n;a,b,c;k)$ for $a, b, c, k \in \Z_n$. 
Note that, then 
$\G/\B \cong \nest(n/d;f(a),f(b),f(c);f(k))$, where 
$f$ is the homomorphism from $\Z_n$ to $\Z_{n/d}$ such that $f(1)=1$. 
\end{rem}

\begin{lem}\label{KR2}
{\rm (\cite[Lemma~4.2]{KR})} 
Let $\G$ be a $G$-edge-transitive Nest graph of order $2n$ such that 
$$
C < G,~C=\la \rho \ra,~\text{and}~  
\rho=(u_0,u_1,\ldots,u_{n-1})(v_0,v_1,\ldots,v_{n-1}),
$$ 
and let $\B$ be a non-cyclic block system for $G$ with blocks of size $d$.
Then the following hold.
\begin{enumerate}[{\rm (1)}]
\item The number $d$ is even and any block in $\B$ is a union of two $C_{d/2}$-orbits. 
\item The group $C$ acts transitively on $\B$ and the kernel of the action of $C$ 
on $\B$ is equal to $C_{d/2}$. 
\item If $d > 2$ and $\B$ is minimal, then $\B$ is normal.
\end{enumerate}
\end{lem}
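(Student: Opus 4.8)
The plan is to use throughout that $C=\la\rho\ra$ is semiregular of order $n$ with exactly the two orbits $\U=\{u_i:i\in\Z_n\}$ and $\V=\{v_i:i\in\Z_n\}$, each of length $n$, and that $C\le G$ permutes the blocks of $\B$. First I would show that every block straddles $\U$ and $\V$. Since $C$ is transitive on $\U$ and each $c\in C$ maps $\U$ to $\U$, it sends blocks meeting $\U$ to blocks meeting $\U$ and preserves the quantity $|B\cap\U|$; hence all blocks meeting $\U$ share a common value $s=|B\cap\U|$, and symmetrically all blocks meeting $\V$ share a common $t=|B\cap\V|$. The non-cyclic hypothesis supplies a block $B_0$ meeting both $\U$ and $\V$, so $s=|B_0\cap\U|<d$; consequently every block meeting $\U$ has $|B\cap\U|=s<d=|B|$ and therefore also meets $\V$, and likewise every block meeting $\V$ also meets $\U$. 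As each block meets $\U$ or $\V$, every block meets both.

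Counting now gives parts (1) and (2). Summing $|B\cap\U|=s$ over the $|\B|=2n/d$ blocks yields $n=(2n/d)s$, so $s=d/2$, and likewise $t=d/2$; in particular $d$ is even. Because every block meets $\U$ and $C$ is transitive on $\U$, the group $C$ is transitive on $\B$, whence $|C_{\{B\}}|=|C|/|\B|=d/2$, forcing $C_{\{B\}}=C_{d/2}$ (the unique subgroup of that order in the cyclic group $C$) for every block $B$. This subgroup is semiregular of order $d/2=|B\cap\U|=|B\cap\V|$, so it acts regularly on each of $B\cap\U$ and $B\cap\V$; thus $B$ is the union of the two $C_{d/2}$-orbits $B\cap\U$ and $B\cap\V$, giving part (1). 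Part (2) follows immediately: $C$ is transitive on $\B$, and the kernel of this action is $\bigcap_{B\in\B}C_{\{B\}}=C_{d/2}$.

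For part (3) the idea is to produce a normal subgroup of $G$ whose orbits are exactly the blocks. I would take $N=L$, the kernel of the action of $G$ on $\B$, which is normal in $G$ automatically; it then suffices to prove $L$ is transitive on each block. I would first record that $\G$ is $G$-vertex-transitive: edge-transitivity carries a rim edge (inside $\U$) to a spoke edge (joining $\U$ and $\V$), so some $g\in G$ sends a $u$-vertex to a $v$-vertex, and combined with $\U=u_0^C$ and $\V=v_j^C$ this forces $u_0^G=V(\G)$. Hence $G$ is transitive on $\B$ and $G_{\{B\}}$ is transitive on $B$. Minimality of $\B$ means that no non-trivial block of $G$ lies properly inside $B$, which via the block–subgroup correspondence says precisely that $G_{\{B\}}$ acts primitively on $B$. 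Now $L\lhd G$ and $L\le G_{\{B\}}$ give $L^B\lhd G_{\{B\}}^B$; since $d>2$ we have $C_{d/2}\le L$ with $C_{d/2}$ transitive on the set $B\cap\U$ of size $d/2>1$, so $L^B\ne1$. A non-trivial normal subgroup of a primitive group is transitive, so $L^B$ is transitive on $B$, i.e.\ $L$ is transitive on $B$; as this holds for every block, $\orb(L,V(\G))=\B$ and $\B$ is normal.

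I expect the substance to lie entirely in part (3), and within it in the two structural inputs that make the argument close: extracting primitivity of $G_{\{B\}}$ on $B$ from the minimality hypothesis, and guaranteeing that the kernel $L$ is large enough to be transitive on blocks. The second point is exactly where $d>2$ is indispensable—it ensures $C_{d/2}\ne1$, so $L$ acts non-trivially on each block and primitivity can be applied; for $d=2$ the kernel may be trivial on the blocks and the conclusion genuinely fails. It is worth noting that this lemma needs only the elementary fact about normal subgroups of primitive groups, not the full classification in Theorem~\ref{M}, which is reserved for the primitive quotients arising later.
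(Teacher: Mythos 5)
Your argument is correct. Note that the paper itself gives no proof of this lemma---it is imported verbatim from \cite[Lemma~4.2]{KR}---and your derivation (the counting argument forcing $|B\cap\{u_i\}|=|B\cap\{v_i\}|=d/2$, the identification of the block stabiliser in the cyclic group $C$ with $C_{d/2}$, and for part (3) the observation that minimality makes $G_{\{B\}}$ primitive on $B$ so that the non-trivial normal subgroup $L^B\supseteq (C_{d/2})^B$ must be transitive) is the natural and, as far as this paper is concerned, the intended route.
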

\section{A property of the graphs $\nest(2m;2,m,2+m;1)$, $m$ is odd}\label{sec:family}
In this section we give the following characterisation of the Nest graphs in the title.
As we said in the introduction, this was mentioned already in \cite{JMSV} 
without a proof.

\begin{prop}\label{family}
Let $\G=\nest(n;a,b,c;k)$ be an edge-transitive graph such that $n > 8$ and 
suppose that there exists a non-identity automorphism of $\G$, which fixes all vertices 
$u_i$, $i\in \Z_n$. Then $\G \cong \nest(2m;2,m,2+m;1)$ for some odd 
number $m$. 
\end{prop}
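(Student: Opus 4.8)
### Proof strategy

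The plan is to analyze the automorphism $\sigma$ that fixes every rim vertex $u_i$ but acts non-trivially on the hub vertices $v_i$. The first observation is that $\sigma$ must preserve each spoke-neighbourhood: since $\sigma$ fixes $u_i$, it permutes the four hub vertices $\{v_i, v_{i+a}, v_{i+b}, v_{i+c}\}$ adjacent to $u_i$ among themselves. Running over all $i \in \Z_n$, these constraints force $\sigma$ to act on $\{v_i : i \in \Z_n\}$ as a permutation that is simultaneously compatible with every spoke-pattern and with the hub-cycle structure $\{v_i, v_{i+k}\}$. The key point I would exploit is that $\sigma$ fixing all $u_i$ means $\sigma$ lies in the kernel $K$ of the action of $\aut(\G)$ on the rim orbit; I would first pin down the structure of this kernel.

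First I would show that $\sigma$ must be an involution with a very rigid form. Because $\sigma$ fixes $u_0$ and permutes $\{v_0,v_a,v_b,v_c\}$, and similarly fixes each $u_i$, the permutation $\sigma$ restricted to the hubs commutes with the shift structure only up to the spoke constraints. I expect that comparing the neighbourhoods of two consecutive rim vertices $u_i$ and $u_{i+1}$ forces $\sigma$ to be induced by a fixed translation: there should exist $t \in \Z_n$ with $v_i^\sigma = v_{i+t}$ for all $i$, or $\sigma$ swaps the hub vertices according to an affine map. The spoke-set $\{0,a,b,c\}$ must then be invariant under $x \mapsto x+t$ (as a subset of $\Z_n$), and since $0$ is in this set, $t$ lies in $\{0,a,b,c\}$ and moreover $\{0,a,b,c\}+t = \{0,a,b,c\}$. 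A non-identity such $t$ forces $\{0,a,b,c\}$ to be a union of cosets of $\la t \ra$, so $\la t \ra$ has order dividing $4$; combined with distinctness of $a,b,c$ and $t \ne 0$, this pushes $\{0,a,b,c\}$ to be precisely a subgroup of order $4$ or a coset-union, i.e. $\{0, t, 2t, 3t\}$ with $4t = 0$, or $\{0,t\} \cup \{s, s+t\}$ with $2t=0$.

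Next I would bring in edge-transitivity and the hub-cycle. Edge-transitivity (via Lemma~\ref{JMSV1} and the rim/hub/spoke edge types) severely restricts which spoke-sets $\{0,a,b,c\}$ can occur, and the invariance $\{0,a,b,c\}+t=\{0,a,b,c\}$ together with the order-$n$ hub structure should force $n$ to be even, $t = m = n/2$ of order $2$, and the spoke-set to take the shape $\{0,2,m,2+m\}$ after applying the normalising isomorphisms of Lemma~\ref{JMSV2}. Concretely, I would use the permitted substitutions $(a,b,c) \mapsto (a',b',c')$, $k \mapsto -k$, etc., from Lemma~\ref{JMSV2} to move any admissible parameter set into the canonical form $\nest(2m;2,m,2+m;1)$. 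The hub-parameter $k=1$ should come out of the requirement that $\G/\la\sigma\ra$ (a quotient of order $n$) be itself a consistent edge-transitive Nest graph, forcing the hub-cycle to be a single $n$-cycle, i.e. $\gcd(k,n)=1$, and the canonical representative has $k=1$; the oddness of $m$ is forced because $k=m/2$ would otherwise appear or because $n \equiv 2 \pmod 4$ is needed for the involution $\sigma$ of this type to exist without collapsing the spoke-set.

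The main obstacle will be the second step: rigorously proving that the hub-action of $\sigma$ is an affine/translation map rather than some wild permutation. A priori $\sigma$ could mix the four spoke-partners of different $u_i$ in a complicated way, and ruling this out requires carefully tracking, for each $i$, how the four-element set $\{v_{i+x} : x \in \{0,a,b,c\}\}$ intersects the corresponding sets for neighbouring $i$, then showing the only consistent global solution is a translation. I would handle this by setting up the system of local constraints, using that $\sigma$ must also respect the hub-edges $\{v_i,v_{i+k}\}$, and arguing that the induced permutation on $\Z_n$ (via $v_i \mapsto v_{i^\sigma}$) is an automorphism of the circulant structure on the hubs, hence affine by the classification of such maps together with the constraint $n > 8$, which excludes small sporadic exceptions. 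Once the affine form is established, the remaining normalisation is a finite case-check streamlined by Lemma~\ref{JMSV2}.
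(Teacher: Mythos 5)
Your opening step rests on a claim you cannot establish by the route you describe, and the route itself invokes a false principle. You want the element $\sigma$ fixing all $u_i$ to act on the hubs as a translation $v_i\mapsto v_{i+t}$, and you propose to get this because ``the induced permutation on $\Z_n$ is an automorphism of the circulant structure on the hubs, hence affine by the classification of such maps.'' There is no such classification: circulant graphs in general admit automorphisms that are not affine, and nothing a priori prevents a single element of the pointwise stabiliser of the rim from permuting the four spoke-neighbours of the various $u_i$ in an incoherent, non-translational way. The paper avoids this issue entirely by working with the \emph{group} rather than one element: let $N$ be the full pointwise stabiliser of $\{u_i : i\in\Z_n\}$ and $H$ its setwise stabiliser. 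Then $N\lhd H$ and $\rho\in H$, so the $N$-orbits on $V=\{v_i : i\in\Z_n\}$ form a $\rho$-invariant partition of $V$ and hence equal $\orb(C_d,V)$ for some $d>1$. Since $N$ fixes $u_0$ it permutes $\{v_0,v_a,v_b,v_c\}$, so $\{0,a,b,c\}$ is a union of cosets of the subgroup of order $d$; as $d\mid 4$ this forces $n=2m$ and, up to Lemma~\ref{JMSV2}, $b=m$, $c=a+m$. This is exactly the conclusion you are aiming for, but obtained from the orbit structure of $N$, not from the (unprovable in this generality) affine form of $\sigma$.

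The more serious gap is everything after that. Once $n=2m$, $b=m$, $c=a+m$ you still face the two-parameter family $\nest(2m;a,m,a+m;k)$, and your proposal offers no mechanism to force $a\in\{2,m-2\}$, $k\in\{1,m-1\}$, and $m$ odd; ``edge-transitivity severely restricts the spoke-set'' is a hope, not an argument. The paper needs two substantial further steps here. First, since $c=a+b$, Lemma~\ref{JMSV1} upgrades edge- to arc-transitivity, and one builds the auxiliary graph $\Delta$ on $\G(u_0)$ whose edges are the pairs $\{w,w'\}$ with $|\G(w)\cap\G(w')|=s$, where $s=|\G(v_0)\cap\G(v_m)|\ge 4$; vertex-transitivity of $\Delta$ together with an explicit common-neighbour computation (and $n>8$) shows $u_1$ must be $\Delta$-adjacent to $v_0$ or $v_a$, which pins down $a\in\{2,m-2\}$ and $k\in\{1,m-1\}$. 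Second, eliminating $m$ even and $k=m-1$ is Lemma~\ref{help}, a genuine minimal-counterexample argument using the block system $B_i=\{u_i,u_{i+m},v_{i+1},v_{i+1+m}\}$, the resulting bound $|\aut(\G')|\le 24n$, Lucchini's theorem to produce a non-trivial core, a reduction modulo an odd prime, and a computer check for $m<12$. Your suggested shortcuts for this last stage do not work: $\gcd(k,n)=1$ does not normalise to $k=1$ under the isomorphisms of Lemma~\ref{JMSV2} (which only allow $k\mapsto -k$), and the stated reasons for $m$ being odd are not correct as they stand.
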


We prove first an auxiliary lemma.

\begin{lem}\label{help}
Let $\G=\nest(2m;a,m,a+m;k)$, where $m > 2$, $a=2$ or $m-2$, and 
$k=1$ or $m-1$. Then $\G$ is edge-transitive if and only if $m$ is odd and 
$\G \cong \nest(2m;2,m,2+m;1)$.
\end{lem}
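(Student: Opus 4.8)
The plan is to analyze the graph $\G=\nest(2m;a,m,a+m;k)$ with $c=a+m$, so that the relation $c=a+b$ with $b=m$ holds; by Lemma~\ref{JMSV1}, edge-transitivity is then equivalent to arc-transitivity, which is the property I will actually exploit. First I would use the isomorphisms in Lemma~\ref{JMSV2} to reduce the number of cases. Since $\{a,b,c\}$ may be permuted, and we may replace $k$ by $-k$ (so $k=1$ and $k=m-1=-1$ in $\Z_{2m}$ are interchangeable), and we may negate all of $a,b,c$, the four combinations of $a\in\{2,m-2\}$ and $k\in\{1,m-1\}$ collapse. For instance, $a=m-2=-2-m+2m$ and negation $\nest(n;-a,-b,-c;k)$ together with the translation $\nest(n;-a,b-a,c-a;k)$ should let me convert $a=m-2$ into $a=2$, since $m=-m$ and $a+m=-a-m$ hold modulo the parity of $m$. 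The key point is that after these reductions every case reduces to the single graph $\nest(2m;2,m,2+m;1)$, so the ``if'' direction of the claim follows immediately from Lemma~\ref{JMSV3}, which asserts this graph is arc-transitive (hence edge-transitive) precisely when $m$ is odd.

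For the ``only if'' direction I must show two things: that $m$ must be odd, and that the graph is isomorphic to the distinguished one. The isomorphism part is handled by the reduction above, so the real content is the parity. The hub connection set is $\{k,-k\}=\{1,-1\}$ (after the reduction), so the hub subgraph on $\{v_i\}$ is a single $2m$-cycle; the rim is likewise a $2m$-cycle. The spoke connection set is $S=\{0,a,m,a+m\}=\{0,2,m,m+2\}$. I would examine the local structure at a vertex and count the girth-$3$ triangles, or more directly track how an arc-transitive group must act. The natural strategy is to suppose $m$ is even and derive a contradiction, most cleanly by exhibiting a proper nontrivial block system or a parity invariant that an arc-transitive group cannot respect. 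When $m$ is even, note that $0,2,m,m+2$ are all even, so every spoke edge $\{u_i,v_{i+s}\}$ with $s\in S$ joins $u_i$ to some $v_j$ with $j\equiv i\pmod 2$; combined with the rim and hub edges (which also preserve the parity class of the subscript up to the bipartition structure), this should force the partition of all $4m$ vertices according to subscript parity to be invariant under the whole automorphism group in a way incompatible with edge-transitivity, since rim edges $\{u_i,u_{i+1}\}$ flip parity while spoke edges do not.

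The hardest step, I expect, is making the parity obstruction airtight rather than merely suggestive. A connection set of all-even residues does not by itself produce a block system, because the rim and hub edges have odd and odd/even jumps respectively; so I cannot naively split by parity across all four edge types. The clean route is to argue at the level of the local action: in an arc-transitive graph the stabiliser $G_{u_0}$ acts transitively on the six neighbours of $u_0$, namely $u_1,u_{-1},v_0,v_2,v_m,v_{m+2}$. I would study which neighbours can be swapped with which, using that rim neighbours ($u_{\pm1}$) and spoke neighbours ($v_{\cdot}$) may be distinguished by a local invariant such as the number of common neighbours (i.e. the number of triangles through each incident edge), and then show that when $m$ is even this invariant separates the six neighbours into parts of unequal size or of sizes not permuted transitively, contradicting arc-transitivity. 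Computing these triangle counts explicitly from the connection sets $\{1,-1\}$ and $\{0,2,m,m+2\}$, and verifying they coincide across all six neighbours exactly when $m$ is odd, is the decisive and most computation-heavy part of the argument; it is also where the special role of the element $m$ (the unique element of order $2$, excluded as a valid hub jump but appearing in the spoke set) enters, since $m+m=0$ behaves differently according to the parity of $m$.
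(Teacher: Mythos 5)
Your reduction step contains a decisive error: you claim that $k=1$ and $k=m-1$ are interchangeable because ``$k=m-1=-1$ in $\Z_{2m}$''. But $-1\equiv 2m-1\pmod{2m}$, not $m-1$, and the isomorphisms of Lemma~\ref{JMSV2} only allow replacing $k$ by $-k=2m-k$ (which sends $m-1$ to $m+1$, not to $1$). So the four cases do \emph{not} collapse to the single graph $\nest(2m;2,m,2+m;1)$; after the legitimate reductions (negation and permutation of $\{a,b,c\}$ do convert $a=m-2$ into $a=2$) you are still left with $\G'=\nest(2m;2,m,2+m;k)$ with $k\in\{1,m-1\}$, and the entire content of the lemma is to rule out edge-transitivity when $m$ is even \emph{or} $k=m-1$. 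Your proposal says nothing at all about the case $k=m-1$ with $m$ odd, which is not touched by any parity invariant, so even if your parity obstruction were made rigorous the proof would be incomplete.

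The second gap is the one you flag yourself: the parity/triangle-count argument for $m$ even is left as a sketch, and it is not clear it can be closed locally, precisely because the rim edges flip subscript parity. The paper's actual proof is global and quite different: it takes a minimal counterexample $m$ (with $m\ge 12$ verified by computer), shows that the relation ``having $4$ common neighbours'' is an equivalence relation with classes $B_i=\{u_i,u_{i+m},v_{i+1},v_{i+1+m}\}$ forming a block system on which the kernel acts faithfully, deduces $|\aut(\G')|\le 24n\le n^2$, and invokes Lucchini's bound (Theorem~\ref{L}) to produce a non-trivial core $N$ of $C$ in $\aut(\G')$. It then splits on the parity of $|N|$: an even core forces $C_2\lhd\aut(\G')$, contradicted by an orbit-adjacency count; an odd core gives $C_p\lhd\aut(\G')$ for an odd prime $p\mid m$, and the cases $m=p$, $m=2p$ are killed directly while $m>2p$ is reduced to a smaller quotient Nest graph, contradicting minimality. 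Your proposal has no substitute for the Lucchini step, no mechanism for the induction on $m$, and no treatment of small $m$; these are not cosmetic omissions but the core of the argument.
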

\begin{proof}
The ``if'' part follows from Lemma~\ref{JMSV3}.  

For the ``only if'' part, assume that $\G$ is edge-transitive. 
Applying Lemma~\ref{JMSV2} to $\G$, we find that 
$$
\G  \cong \G':=\nest(2m;2,m,2+m;k),
$$ 
where $k=1$ or $m-1$. We have to show that $m$ is odd and $k=1$.

Assume on the contrary that $m$ is even or $k=m-1$. 
Moreover, let us choose $m$ so that it is the smallest number for which this happens. 
A quick check with the computer algebra package 
{\sc Magma}~\cite{BCP} shows that $m \ge  12$.

Define the binary relation $\sim$ on $V(\G')$ by letting  
$u \sim v$ whenever $|\G'(u) \cap \G'(v)|=4$ for any $u, v \in V(\G')$. 
Using that $m \ge 12$, it is not hard to show that $\sim$ is an equivalence relation with 
classes in the form 
$$
B_i:=\{u_i,u_{i+m},v_{i+1},v_{i+1+m}\},~i \in \Z_n.
$$
Clearly, $\sim$ is invariant under $\aut(\G')$, so the classes above form a block 
system for $\aut(\G')$. Part of the graph with $k=m-1$ is shown in Figure~1.

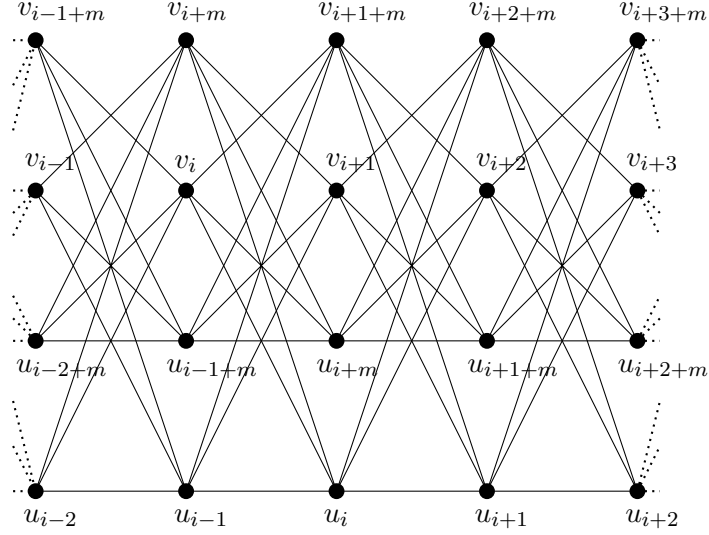
\begin{figure}[t!]
\begin{tikzpicture}[scale=1,line width=0.4pt]
\foreach \x in {0,2,4,6,8}
\foreach \y in {0,2,4,6} 
\fill (\x,\y) circle (3pt);
\foreach \x in {2,4,6,8}
\draw (\x,0) -- (\x-2,4) -- (\x,2) -- (\x-2,6) -- (\x,0);
\foreach \x in {0,2,4,6}
\draw (\x,0) -- (\x+2,4) -- (\x,2) -- (\x+2,6) -- (\x,0);
\draw (0,0) -- (8,0) (0,2) -- (8,2);
\draw (0,4) -- (2,6) -- (4,4) -- (6,6) -- (8,4);
\draw (0,6) -- (2,4) -- (4,6) -- (6,4) -- (8,6);
\draw 
(0.2,-0.1) node[below] {\small $u_{i-2}$} 
(2.2,-0.1) node[below] {\small $u_{i-1}$}
(4,-0.1) node[below] {\small $u_{i}$}
(6.2,-0.1) node[below] {\small $u_{i+1}$} 
(8.2,-0.1) node[below] {\small $u_{i+2}$}; 
\draw 
(0.35,1.9) node[below] {\small $u_{i-2+m}$} 
(2.35,1.9) node[below] {\small $u_{i-1+m}$}
(4.15,1.9) node[below] {\small $u_{i+m}$}
(6.35,1.9) node[below] {\small $u_{i+1+m}$} 
(8.35,1.9) node[below] {\small $u_{i+2+m}$}; 
\draw 
(0.2,4.1) node[above] {\small $v_{i-1}$} 
(2,4.1) node[above] {\small $v_{i}$}
(4.2,4.1) node[above] {\small $v_{i+1}$}
(6.2,4.1) node[above] {\small $v_{i+2}$} 
(8.2,4.1) node[above] {\small $v_{i+3}$}; 
\draw 
(0.35,6.1) node[above] {\small $v_{i-1+m}$} 
(2.15,6.1) node[above] {\small $v_{i+m}$}
(4.35,6.1) node[above] {\small $v_{i+1+m}$}
(6.35,6.1) node[above] {\small $v_{i+2+m}$} 
(8.35,6.1) node[above] {\small $v_{i+3+m}$}; 
\draw[dotted,thick] 
(-0.3,4.8) -- (0,6) (8,6) -- (8.3,4.8)
(-0.3,5.4) -- (0,6) (8,6) -- (8.3,5.4)
(-0.3,6) -- (0,6) (8,6) -- (8.3,6)
(-0.3,3.4) -- (0,4) (8,4) -- (8.3,3.4)
(-0.3,3.7) -- (0,4) (8,4) -- (8.3,3.7)
(-0.3,4) -- (0,4) (8,4) -- (8.3,4)
(-0.3,2.6) -- (0,2) (8,2) -- (8.3,2.6)
(-0.3,2.3) -- (0,2) (8,2) -- (8.3,2.3)
(-0.3,2) -- (0,2) (8,2) -- (8.3,2)
(-0.3,1.2) -- (0,0) (8,0) -- (8.3,1.2)
(-0.3,0.6) -- (0,0) (8,0) -- (8.3,0.6)
(-0.3,0) -- (0,0) (8,0) -- (8.3,0);
\end{tikzpicture}
\caption{The Nest graph $\nest(2m;2,m,2+m,m-1)$.}
\end{figure}

Let $K$ be the kernel of the action of $\aut(\G')$ on the latter block system. 
We prove next that $K$ is faithful on every block. Suppose that $g$ fixes pointwise 
the block $B_i$ for some $i \in \Z_n$. 
Any pair of vertices in $B_{i+1}$ are contained in a unique $4$-cycle intersecting 
$B_i$ at two vertices (see Figure~1). This means that $g$ maps any pair to itself, implying that $g$ fixes pointwise $B_{i+1}$. Repeating the argument, 
we conclude that $g$ fixes pointwise each block $B_i$, i.e., $g$ is 
the identity automorphism. 

Let $n=2m$, $C=\la \rho \ra$, where $\rho=(u_0,u_1,\ldots,u_{n-1})(v_0,v_1,\ldots,v_{n-1})$.
Using the facts that $K$ is faithful on every block $B_i$ and the quotient graph $\G/\B$ is an $n/2$-cycle whose automorphism group is isomorphic to 
the dihedral group $D_{n/2}$ of order $n$, we obtain the bound 
$$
|\aut(\G')| \le |K| \cdot n=24n.
$$ 
Thus $|C|^2=n^2 \ge |\aut(\G')|$ because $n=2m \ge 24$. By Theorem~\ref{L},  
$C$ has a non-trivial core in $\aut(\G')$, let this core be denoted by $N$.  

Assume first that $|N|$ is even. Then as $C_2$ is characteristic in $N$ and 
$N \lhd  \aut(\G')$, we obtain that $C_2 \lhd \aut(\G')$. Thus 
$\orb(C_2,V(\G))$ is a block system for $\aut(\G')$. But this is impossible because $u_0$ has 
one neighbour from the orbit $\{u_1,u_{1+m}\}$ and two from the orbit 
$\{v_0,v_m\}$. 

Let $|N|$ be odd and choose an odd prime divisor $p$ of $|N|$. It follows as 
above that $C_p \lhd \aut(\G')$. Clearly, $p$ divides $m$. 

Assume that $m=p$ or $2p$. If $m=p$, then by our initial assumptions, $k=p-1$. 
But this means that the edge $\{v_0,v_k\}$ is contained in a $C_p$-orbit, contradicting 
that $C_p \lhd \aut(\G')$ and $\G'$ is edge-transitive.
If $m=2p$, then $u_0$ has one neighbour from the $C_p$-orbit 
$\{ u_{4i+1} : 0 \le i \le p-1\}$ and two from the $C_p$-orbit 
$\{ v_{4i} : 0 \le i \le p-1\}$ (namely, $v_0$ and $v_{2+m}=v_{2+2p}$), which 
is a contradiction again.

Let $m > 2p$. By Lemma~\ref{KR1}(3) and the remark after the lemma, 
$$
\G'/C_p \cong \nest(2m/p;f(2),f(m),f(2+m),f(k)),
$$ 
where $f$ is the homomorphism from $\Z_{2m}$ to $\Z_{2m/p}$  such that $f(1)=1$. 
Since $m > 2p$ and $p$ is odd, it follows that 
$$
f(2)=2,~f(m)=m/p~\text{and}~f(2+m)=2+m/p.
$$ 
Furthermore, $f(k)=1$ if $k=1$ and $f(k)=m/p-1$ if $k=m-1$. 
By the minimality of $m$, we see that $m/p$ is odd and $f(k)=1$. 
This, however, contradicts that $m$ is even or $k=m-1$.  
\end{proof}

\begin{proof}[Proof of Proposition~\ref{family}]
Let $H$ and $N$ be the setwise and the pointwise stabiliser, 
respectively, of the set $\{ u_i : i \in \Z_n\}$ in $\aut(\G)$. 
Then $N \ne 1$ and $N \lhd H$. 
It follows that the $N$-orbits contained in $V:=\{ v_i : i \in \Z_n\}$ form  
a block system for the action of $H$ on $V$, implying that 
$\orb(N,V)=\orb(C_d,V)$ for some $d >1$.  
This yields that $n=2m$ and we may write w.l.o.g.~that 
$$
a < m,~b=m,~c=a+m,~\text{and}~k < m.
$$ 

Let $\eta$ be the permutation of the vertex set acting as  
$$
u_i^\eta=u_i~\text{and}~v_i^\eta=v_{i+m}~(i\in \Z_n).
$$   
It is easy to check that $\eta \in \aut(\G)$.

Note that, by Lemma~\ref{JMSV1}, $\G$ is arc-transitive, so 
$\aut(\G)_{u_0}$ is transitive on $\G(u_0)$. Let $s=|\G(v_0) \cap \G(v_m)|$. 
It is easy to see that $s \ge 4$. Define the graph $\Delta$ as follows: 
$$
V(\Delta)=\G(u_0)~\text{and}~E(\Delta)=\{ \{w,w'\} : |\G(w) \cap \G(w')|=s\}.
$$ 
Note that $\Delta$ is vertex-transitive, in particular, it is regular. 

Assume for the moment that $u_1$ and $u_{-1}$ are adjacent in $\Delta$. 
This means $|\G(u_1) \cap \G(u_{-1})|=s \ge 4$. Since 
$\G(u_1) \cap \G(u_{-1}) \cap \{u_i : i \in \Z_n\}=\{u_0\}$, we conclude that 
\begin{equation}\label{eq:cap}
|\{1, 1+a, 1+m, 1+a+m \} \cap \{-1,-1+a,-1+m,-1+a+m\}| \ge 3.
\end{equation}
At least one of $1$ and $1+m$ is in the intersection. If 
it is $1$, then $1=-1+a$ or $-1+a+m$ because $n >4$.  
As $a < m$, we find that $a=2$. 
Similarly, if $1+m$ is in the intersection, then $1+m=-1+a$ or $-1+a+m$, and 
we find again that $a=2$. Now, substituting $a=2$ in \eqref{eq:cap}, 
a contradiction arises because $n > 8$. 

Therefore, $u_1$ and $u_{-1}$ are not adjacent in $\Delta$. Using also that 
$\eta \in \aut(\G)$, see above, we obtain that $u_1$ must be 
adjacent with $v_0$ or $v_a$. 

Assume first that $u_1$ and $v_0$ are adjacent.  
Then $\G(u_1) \cap \G(v_0)=\{u_0,u_2,v_k,v_{-k}\}$, hence  
$2 \in \{0,n-a,m,n-a+m\}$ and $k \in \{1,1+a,1+m,1+a+m\}$.  
Since $a, k < m$ and $n > 4$, we find in turn that $a=m-2$, and $k=1$ or $m-1$. 

Now, if $u_1$ and $v_a$ are adjacent, then 
$\G(u_1) \cap \G(v_a)=\{u_0,u_2,v_{a+k},v_{a-k}\}$, whence  
$2 \in \{a,0,a+m,m\}$ and $a+k \in \{1,1+a,1+m,1+a+m\}$.  
Since $a, k < m$ and $n > 4$, we find in turn that $a=2$, and $k=1$ or $m-1$. 

To sum up, $a=2$ or $m-2$ and $k=1$ or $m-1$, and so the proposition 
follows from Lemma~\ref{help}. 
\end{proof}

\begin{cor}\label{cor-family}
Let $\G$ be a $G$-edge-transitive Nest graph of order $2n$ such that $n > 8$ and 
$$
C < G,~C=\la \rho \ra,~\text{and}~  
\rho=(u_0,u_1,\ldots,u_{n-1})(v_0,v_1,\ldots,v_{n-1}),
$$ 
and suppose that $\orb(C_{n/2},V(\G))$ is a block system for $G$. 
Then $C_{n/2} \lhd G$. 
\end{cor}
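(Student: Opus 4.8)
The plan is to pin down the kernel $K$ of the action of $G$ on $\B:=\orb(C_{n/2},V(\G))$ and to show that $C_{n/2}$ is either equal to, or characteristic in, $K$; since $K\lhd G$, either conclusion yields $C_{n/2}\lhd G$. First I would record the structure of $\B$. As $C_{n/2}$ denotes the subgroup of $C$ of order $n/2$, the integer $n$ is even, $C_{n/2}=\la\rho^2\ra$, and its four orbits are $\{u_{2j}:j\}$, $\{u_{2j+1}:j\}$, $\{v_{2j}:j\}$ and $\{v_{2j+1}:j\}$, each of size $n/2$; these are precisely the blocks of $\B$. Consequently $C_{n/2}\le K\lhd G$, since $C_{n/2}$ fixes each of these four sets setwise.

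The heart of the argument is to bound $K$ using the rim. Because $K$ fixes every block setwise, it preserves the set $\{u_i:i\in\Z_n\}$, on which the induced subgraph is the rim $n$-cycle. Hence $K$ acts on $\{u_i\}$ as a group of automorphisms of $C_n$, giving a homomorphism $K\to\aut(C_n)\cong D_n$ whose kernel consists exactly of those elements of $K$ that fix every $u_i$. At this point I would split according to Proposition~\ref{family}. If some non-identity automorphism of $\G$ fixes all $u_i$, then, as $n>8$, Proposition~\ref{family} forces $\G\cong\nest(2m;2,m,2+m;1)$ with $m$ odd; for this graph a direct computation with the generators $\varphi,\eta$ of the vertex stabiliser from Lemma~\ref{JMSV3} (one checks $\varphi\rho^2\varphi=\rho^{-2}$ and that $\eta$ centralises $\rho^2$, while $\aut(\G)=\la\rho,\varphi,\eta\ra$) shows $\la\rho^2\ra\lhd\aut(\G)$, whence $C_{n/2}\lhd G$. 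Otherwise no non-identity automorphism fixes all $u_i$, so the homomorphism $K\to\aut(C_n)$ is injective; since $K$ also preserves the two classes $\{u_{2j}\}$ and $\{u_{2j+1}\}$, its image lies in the parity-preserving dihedral subgroup of $D_n$, which has order $n$ and is isomorphic to $D_{n/2}$.

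It then remains to conclude from $C_{n/2}\le K\hookrightarrow D_{n/2}$. The constraints $n/2\mid|K|$ and $|K|\mid n$ leave only $|K|=n/2$ or $|K|=n$. If $|K|=n/2$, then $K=C_{n/2}$, which is normal in $G$ as a kernel. If $|K|=n$, then $K\cong D_{n/2}$ and $C_{n/2}$ maps onto the rotation subgroup; since $n/2>4$, this is the unique cyclic subgroup of order $n/2$ in $D_{n/2}$ and hence characteristic, so $C_{n/2}$ is characteristic in $K\lhd G$, giving $C_{n/2}\lhd G$ again. I expect the main obstacle to be the control of $K$: a priori $K$ may be strictly larger than $C_{n/2}$ (indeed as large as a dihedral group of order $n$), and the exceptional possibility that some automorphism fixes the whole rim must be isolated through Proposition~\ref{family} and then settled by hand with the explicit automorphisms of Lemma~\ref{JMSV3}.
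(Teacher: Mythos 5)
Your proposal is correct and follows essentially the same route as the paper: both reduce to the action of the kernel $K$ on the rim cycle, handle the non-faithful case via Proposition~\ref{family} together with the explicit generators $\varphi,\eta$ of Lemma~\ref{JMSV3}, and in the faithful case deduce that $C_{n/2}$ is characteristic in $K\lhd G$. Your order-counting argument ($|K|\in\{n/2,n\}$ inside the parity-preserving dihedral group) is just a more explicit rendering of the paper's one-line claim that $(C_{n/2})^*$ is characteristic in $K^*$.
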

\begin{proof}
Let $K$ be the kernel of the action of $G$ on the block system $\orb(C_{n/2},V(\G))$, 
and let $K^*$ and $(C_{n/2})^*$ denote the image of 
$K$ and $C_{n/2}$, respectively, induced by their action on $U:=\{ u_i : i \in \Z_n\}$. 
Since the subgraph of $\G$ induced by $U$ is a cycle of length $n > 8$, it 
follows that $(C_{n/2})^*$ is characteristic in $K^*$. 
Therefore, if $K$ is faithful on $U$, then $C_{n/2}$ is characteristic in $K$ and 
as $K \lhd G$, we obtain that $C_{n/2} \lhd G$, as required.

If $K$ is not faithful on $U$, then by Proposition~\ref{family}, 
$n=2m$, $m$ is odd, and $\G \cong \G':=\nest(2m;2,m,2+m;1)$. 
Consider the group $\la C, \varphi, \eta \ra$, where 
$\varphi$ and $\eta$ are defined in Lemma~\ref{JMSV3}. 
This is transitive on $V(\G')$ and also contains the stabiliser of $u_0$ in 
$\aut(\G')$, therefore, $\aut(\G')=\la C, \varphi, \eta \ra$. 
A straightforward computation shows that $\varphi\rho^2\varphi=\rho^{-2}$ and 
$\eta \rho^2=\rho^2 \eta$, and hence $C_{n/2} \lhd \aut(\G')$. 
All these show that  $C_{n/2} \lhd G$ holds in this case as well. 
\end{proof}
\section{Proof Theorem~\ref{main}}\label{sec:proof}
Throughout this section we keep the following notation.

\begin{hypo}\label{hypo}\

\begin{quote}
$\G=\nest(n;a,b,c;k)$ is a Nest graph of order $2n$, $n \ge 4$, \\ [+1ex]
$C=\la \rho \ra$, where $\rho=(u_0,u_1,\ldots,u_{n-1})(v_0,v_1,\ldots,v_{n-1}),
$ \\ [+1ex]
$G \le \aut(\G)$ such that $\rho \in G$, $G$ acts transitively on $E(\G)$, 
and $\core_G(C)=1$.
\end{quote}
\end{hypo}

Instead of Theorem~\ref{main} we show the following slightly more stronger theorem. 
The proof will be given in the end of the section.

\begin{thm}\label{main+}
Assuming Hypothesis~\ref{hypo}, 
$\G$ is isomorphic to one of the graphs: $\nest(5;1,2,3;2)$, 
$\nest(8;1,3,4;3)$, $\nest(8;1,2,5;3)$ and $\nest(12;2,4,8;5)$.
\end{thm}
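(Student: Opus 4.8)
The plan is to play the classification of primitive bicirculants (Corollary~\ref{cor-DGJ}) against a block-system reduction, with Lucchini's inequality (Theorem~\ref{L}) supplying the numerical control and Proposition~\ref{family} isolating the one exceptional family. It is convenient to work throughout with the group $G$ of Hypothesis~\ref{hypo} rather than with $\aut(\G)$. I would first record that $G$ is vertex-transitive: its orbits on $V(\G)$ are unions of the two $C$-orbits $U=\{u_i:i\in\Z_n\}$ and $V=\{v_i:i\in\Z_n\}$, and were $\{U,V\}$ to be $G$-invariant, $G$ would send the rim edges lying inside $U$ only to rim or hub edges and never to spoke edges, contradicting edge-transitivity; hence $U\cup V$ is a single $G$-orbit. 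Consequently $|G|=2n\,|G_{u_0}|$, and since $C$ is a core-free cyclic proper subgroup, Theorem~\ref{L} yields $n^2=|C|^2<|G|$, i.e.\ $|G_{u_0}|>n/2$. This lower bound on the stabiliser is the engine that will eventually bound $n$.

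Next I would split on the primitivity of $G$. If $G$ is primitive then $\G$ is a $G$-edge-transitive bicirculant with $G$ primitive, so Corollary~\ref{cor-DGJ} applies; of the graphs listed there only the complement of the Petersen graph and the Hamming graph $H(2,4)$ have valency $6$, giving $\G\cong\nest(5;1,2,3;2)$ or $\G\cong\nest(8;1,3,4;3)$. In particular every primitive example has $n\le 8$.

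So suppose $G$ is imprimitive and fix a minimal block system $\B$. The core-free hypothesis rules out the cyclic block systems: if $\B$ is cyclic with blocks of size $d$, then for $d<n/2$ Lemma~\ref{KR1} identifies the kernel of $G$ on $\B$ with $C_d$, forcing $C_d\lhd G$ and hence $d=1$; the case $d=n/2$ is excluded for $n>8$ by Corollary~\ref{cor-family}, which would give $C_{n/2}\lhd G$; and $d=n$, i.e.\ blocks $U,V$, was already ruled out above. Thus for $n>8$ every minimal block system is non-cyclic, and Lemma~\ref{KR2} then shows that $d$ is even, that $\bar C=C/C_{d/2}$ acts as a regular cyclic group on $\B$, and (for $d>2$) that $\B$ is normal. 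Hence $\G/\B$ is a circulant on $2n/d$ vertices carrying the regular cyclic group $\bar C$, of valency dividing $6$ and arc-transitive, so that Lemma~\ref{BFSX} (valencies $3,4$) and Lemma~\ref{6} (valency $6$) apply; the valency-$3$ quotients are exactly $K_4$ and $K_{3,3}$, and the $2$-covers of $K_{3,3}$ are where $\nest(12;2,4,8;5)$ will be found.

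The remaining and most delicate work is to bound $n$ and finish by a finite check. A clean dichotomy is whether the pointwise stabiliser of $U$ in $G$ is trivial. If it is non-trivial and $n>8$, then Proposition~\ref{family} forces $\G\cong\nest(2m;2,m,2+m;1)$ with $m$ odd; for this family Lemma~\ref{JMSV3} gives $|\aut(\G)_{u_0}|=12$, hence $|\aut(\G)|=48m$, and the core-free bound $|C|^2=4m^2<|\aut(\G)|$ fails as soon as $m\ge 12$, leaving only finitely many $m$. If instead the pointwise stabiliser of $U$ is trivial, one exploits the circulant quotient: the normal subgroups of $\bar G$ produced by Lemma~\ref{6} (a copy of $\bar C$, a $2$-power-index subgroup of $\bar C$, or a copy of $\Z_3^\ell$), together with the triviality of the pointwise stabiliser of $U$, bound $|G|$, and comparison with $|G|>n^2$ forces $n$ to be small. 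A finite computation in the resulting range (as in the proof of Lemma~\ref{help}) then settles the small cases, producing the Shrikhande graph $\nest(8;1,2,5;3)$ and the $2$-cover $\nest(12;2,4,8;5)$ of $K_{3,3}$. I expect the principal obstacle to be carrying out this cover-reconstruction uniformly across the several valency cases while keeping the core-free bookkeeping tight enough to exclude every infinite family; the valency-$6$ branch, where the kernel of the action on $\B$ must be controlled against the normal subgroups of $\bar G$, and the exceptional family of Proposition~\ref{family}, are the points where the analysis is most likely to be subtle.
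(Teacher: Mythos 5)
Your overall architecture (vertex-transitivity, Lucchini's bound, the primitive case via Corollary~\ref{cor-DGJ}, exclusion of cyclic block systems via Lemma~\ref{KR1} and Corollary~\ref{cor-family}, and a terminal finite check) agrees with the paper's, and your treatment of the branch where the pointwise stabiliser of $U=\{u_i : i \in \Z_n\}$ is non-trivial is sound: Proposition~\ref{family} plus $|\aut(\G)|=48m$ from Lemma~\ref{JMSV3} does force $4m^2<48m$ and hence $m<12$. But the complementary branch --- which is where essentially all of the paper's work lies --- has a genuine gap, in two places. First, your claim that the quotient of $\G$ by a minimal non-cyclic block system $\B$ is ``a circulant of valency dividing $6$'' is false when the blocks have size $2$: such a block system need not be normal (Lemma~\ref{KR2}(3) only gives normality for $d>2$), so the normal-$r$-cover argument does not apply, and in fact the paper proves (Lemma~\ref{2}) that for $n>50$ the quotient has valency $12$. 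Handling this requires the auxiliary involution $\tau$ and the enlarged graph $\G'$ of Lemmas~\ref{2}--\ref{2+}, whose purpose is precisely to manufacture a \emph{normal} non-cyclic block system with blocks of size $4$; nothing in your sketch produces this.

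Second, your proposed endgame --- that triviality of the pointwise stabiliser of $U$ ``bounds $|G|$'' against $|G|>n^2$ --- does not go through as stated. The setwise stabiliser $G_{\{U\}}$ does act faithfully on the rim cycle and so has order at most $2n$, but $U$ is not a block for $G$ (if $\{U,V\}$ were $G$-invariant, edge-transitivity would fail, as you yourself observe), so the index $|G:G_{\{U\}}|$ is not bounded by $2$ and no bound on $|G|$ follows. The paper does not bound $|G|$ this way: for blocks of size at least $4$ it applies M\"uller's Theorem~\ref{M} to the primitive action of $G_{\{B\}}$ on a block, eliminates the almost simple block actions (Lemma~\ref{>2}) and the affine ones of size $8$ and $16$ (Lemma~\ref{>2+}), and then kills the surviving normal size-$4$ block system by running through the $r$-cover trichotomy $r\in\{1,2,3\}$, using the normal subgroups supplied by Lemma~\ref{6} not to bound $|G|$ but to manufacture a characteristic subgroup of $KC$ lying in $C$, contradicting $\core_G(C)=1$. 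These steps --- the valency-$12$ quotient, the reduction to normal blocks of size $4$, and the M\"uller-based elimination --- are the substance of the proof and are absent from your proposal.
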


We start with a computational result, which we retrieved from 
\cite[Table~1]{JMSV} with the help of {\sc Magma}~\cite{BCP}. 
Here we use the obvious facts that  
$C$ is also core-free in $\aut(\G)$ and that $\aut(\G)$ is primitive 
whenever so is $G$.

\begin{lem}\label{small}
Assuming Hypothesis~\ref{hypo}, if $n \le 50$, then 
the following hold.
\begin{enumerate}[{\rm (1)}]
\item $\G$ is isomorphic to one of the graphs: 
$$
\nest(5;1,2,3;2),~\nest(8;1,3,4;3),~\nest(8;1,2,5;3)~\text{and}~ 
\nest(12;2,4,8;5).
$$
\item $G$ is either primitive and $\G \cong \nest(5;1,2,3;2)$ or $\nest(8;1,3,4;3)$; 
or for $N:=\soc(\aut(\G))$,    
$N \cong \Z_2^2$ if $n=8$ and $N \cong \Z_2^4$ if $n=12$, and the $N$-orbits have 
length $4$.  
\end{enumerate}
\end{lem}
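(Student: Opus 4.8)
The statement is a finite check, so the plan is to cut the problem down to a computer search over a known census plus a routine group computation for the survivors. First I would remove the dependence on the choice of $G$ in part~(1). Put $K=\core_{\aut(\G)}(C)$; then $K\le C\le G$ and $K\lhd\aut(\G)$, so $K$ is normal in the subgroup $G$ as well, and being a normal subgroup of $G$ contained in $C$ it satisfies $K\le\core_G(C)=1$. Hence $C=\la\rho\ra$ is already core-free in the full group $\aut(\G)$, and for part~(1) Hypothesis~\ref{hypo} may be replaced by the single condition that $\la\rho\ra$ has trivial core in $\aut(\G)$. It therefore suffices to run over all edge-transitive Nest graphs of order $2n\le 100$ and keep exactly those for which $\la\rho\ra$ is core-free in $\aut(\G)$.

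The list of candidates is furnished by the census \cite[Table~1]{JMSV}, which covers the edge-transitive Nest graphs in this order range. For each entry I would, in {\sc Magma}~\cite{BCP}, construct $\nest(n;a,b,c;k)$ together with the permutation $\rho$, compute $\aut(\G)$, and test whether $\bigcap_{g\in\aut(\G)}C^g=1$. Running through the census leaves precisely $\nest(5;1,2,3;2)$, $\nest(8;1,3,4;3)$, $\nest(8;1,2,5;3)$ and $\nest(12;2,4,8;5)$, which gives part~(1).

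For part~(2) I would treat these four graphs one at a time, using the elementary fact that every block system for $\aut(\G)$ is also a block system for the transitive subgroup $G$; thus $\aut(\G)$ imprimitive forces $G$ imprimitive, while $G$ primitive forces $\aut(\G)$ primitive. For $\nest(8;1,2,5;3)$ (the Shrikhande graph) and $\nest(12;2,4,8;5)$ one exhibits a nontrivial block system for $\aut(\G)$, so $G$ is imprimitive; computing $N=\soc(\aut(\G))$ then yields $N\cong\Z_2^2$, respectively $N\cong\Z_2^4$, each with orbits of length $4$. For $\nest(5;1,2,3;2)$ and $\nest(8;1,3,4;3)$ one instead lists the core-free edge-transitive subgroups $G$ containing $\rho$ and checks that each is primitive: for the Petersen complement a short Sylow count in $S_5$ gives $G\in\{A_5,S_5\}$, and for $H(2,4)=\nest(8;1,3,4;3)$ the candidates sit inside the primitive product action of $\aut(H(2,4))\cong S_4\wr S_2$, in agreement with Proposition~\ref{DGJ} and Corollary~\ref{cor-DGJ}.

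The group theory is routine once each graph is pinned down; the real content, and the point to be careful about, is the completeness of the census, since the whole reduction hinges on \cite[Table~1]{JMSV} being an exhaustive list of edge-transitive Nest graphs of order at most $100$. The only genuinely structural step is the core-descent of the first paragraph, which is what makes the verification independent of $G$ and lets it be carried out uniformly inside $\aut(\G)$.
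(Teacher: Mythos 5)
Your proposal is correct and follows essentially the same route as the paper: the paper's proof of this lemma is precisely a {\sc Magma} verification against the census in \cite[Table~1]{JMSV}, justified by the same two observations you make explicit, namely that $\core_{\aut(\G)}(C)\le\core_G(C)=1$ (so the check can be done once in $\aut(\G)$) and that primitivity transfers between $G$ and $\aut(\G)$. Your write-up merely spells out these reductions and the per-graph verification of part~(2) in more detail than the paper's one-sentence proof.
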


The existence of a non-trivial non-cyclic block system is established next.

\begin{lem}\label{imp}
Assuming Hypothesis~\ref{hypo}, suppose that $n > 8$. 
Then $G$ admits a non-trivial non-cyclic block system.  
\end{lem}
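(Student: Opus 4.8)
The plan is to argue by contradiction: I assume that every non-trivial block system for $G$ is cyclic and derive a contradiction. Before invoking this assumption I record two facts that hold unconditionally. First, since the $G$-orbits on $V(\G)$ are unions of the $C$-orbits $U=\{u_i:i\in\Z_n\}$ and $V=\{v_i:i\in\Z_n\}$, and $\G$ carries both a rim edge (inside $U$) and a spoke edge (between $U$ and $V$), edge-transitivity rules out $U$ and $V$ being distinct orbits; hence $G$ is vertex-transitive. Second, I pin down all cyclic block systems. If $\B$ is cyclic, then each block lies inside $U$ or inside $V$; as $C$ is regular on each of $U$ and $V$, the $C$-invariant partition $\B$ must consist of the $C_d$-orbits, so $\B=\orb(C_d,V(\G))$ where $d=|B|$ divides $n$. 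For $1<d<n/2$, Lemma~\ref{KR1}(1) identifies the kernel of $G$ on $\B$ as $C_d$, whence $C_d\lhd G$; for $d=n/2$, Corollary~\ref{cor-family} gives $C_{n/2}\lhd G$ (this is where $n>8$ enters). In either case $\core_G(C)=1$ is violated, so the \emph{only} cyclic block system that can occur is $\{U,V\}$ (the case $d=n$).

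Next I bring in imprimitivity. Since $\G\in\F(6)$, if $G$ were primitive then Corollary~\ref{cor-DGJ} would force $\G$ to be one of $K_6$, the Petersen graph, its complement, or $H(2,4)$, all of order at most $16$; as $n>8$, the group $G$ is imprimitive. Hence a non-trivial block system exists, and under the contradiction hypothesis it is cyclic, so it equals $\{U,V\}$; moreover $\{U,V\}$ is then the \emph{unique} non-trivial block system. Translating to the subgroup lattice, the interval between the point stabiliser $G_{u_0}$ and $G$ is the chain $G_{u_0}<G_{\{U\}}<G$, so $G_{u_0}$ is maximal in $G_{\{U\}}$ and therefore $G_{\{U\}}$ acts primitively on $U$. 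Because $G_{\{U\}}$ preserves the induced rim $n$-cycle $\G[U]$, its image on $U$ is a transitive subgroup of $\aut(\G[U])=D_n$ containing the full rotation group of order $n$; such a group is primitive only when $n$ is prime. Thus $n=p$ is an odd prime and the order of the image of $G_{\{U\}}$ on $U$ is at most $2p$.

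Finally I reach the contradiction through an order estimate. The subgroup $C$ is cyclic, proper (it is intransitive while $G$ is transitive), and core-free, so Theorem~\ref{L} gives $|G|>|C|^2=p^2$, whence $|G_{\{U\}}|=|G|/2>p^2/2$. Consequently the kernel $L$ of the action of $G_{\{U\}}$ on $U$ satisfies $|L|=|G_{\{U\}}|/|G_{\{U\}}^{U}|>(p^2/2)/(2p)=p/4>1$, so $L\ne 1$. A non-identity element of $L$ fixes every $u_i$, so Proposition~\ref{family} applies and yields $\G\cong\nest(2m;2,m,2+m;1)$ with $m$ odd; in particular $n=2m$ is even, contradicting $n=p$ odd. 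This contradiction shows the hypothesis fails, so $G$ admits a non-cyclic block system. I expect the one delicate point to be the passage from ``$\{U,V\}$ is the unique non-trivial block system'' to the primitivity of $G_{\{U\}}$ on $U$ and the ensuing restriction $n=p$ via the dihedral symmetry of the rim; once that is in place, the remaining steps are routine bookkeeping with the Lucchini bound and Proposition~\ref{family}.
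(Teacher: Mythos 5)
Your proof is correct, and its core is the same as the paper's: rule out primitivity via Corollary~\ref{cor-DGJ}, then kill every cyclic block system with blocks of size $d<n/2$ by Lemma~\ref{KR1}(1) and $d=n/2$ by Corollary~\ref{cor-family}, contradicting $\core_G(C)=1$. Where you diverge is the boundary case $d=n$, i.e.\ $\B=\{U,V\}$, which is cyclic under the paper's definition but covered by neither of those two results; the paper's proof passes over it in silence, while you handle it explicitly with a correct but rather heavy argument (uniqueness of the block system $\Rightarrow$ maximality of $G_{u_0}$ in $G_{\{U\}}$ $\Rightarrow$ primitivity on the rim cycle $\Rightarrow$ $n$ prime, then Lucchini's bound and Proposition~\ref{family} to force $n$ even). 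That chain checks out, but it is far more than is needed: if $U$ were a block, every $g\in G$ would send $U$ to $U$ or to $V$, so the rim edge $\{u_0,u_1\}$ could only map to rim or hub edges and never to a spoke edge, contradicting edge-transitivity in one line. So your write-up is sound and in fact more complete than the paper's on this point, at the cost of invoking Theorem~\ref{L} and Proposition~\ref{family} where an elementary edge-type observation suffices.
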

\begin{proof}
Observe that, if $G$ is primitive, then Corollary~\ref{cor-DGJ} shows 
that $\G \cong \nest(5;1,2,3;2)$ or 
$\nest(8;1,3,4;3)$. As $n > 8$, $G$ is imprimitive.  

Let $\B$ be a non-trivial block system with blocks of size $d$. 
If $\B$ is cyclic, then by Lemma~\ref{KR1}(1) and Corollary~\ref{cor-family}, $C_d \lhd G$, where $C_d$ is the subgroup of $C$ of order $d$. This contradicts our assumption that $\core_G(C)=1$, so $\B$ is non-cyclic. 
\end{proof}

In the next two lemmas we study non-cyclic block systems with blocks of size $2$. 

\begin{lem}\label{2}
Assuming Hypothesis~\ref{hypo},  
suppose that $n > 50$ and $\B$ is a non-cyclic block system for $G$ with blocks of size $2$. Then $\G/\B$ has valency $12$.
\end{lem}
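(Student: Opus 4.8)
The plan is to realise $\G/\B$ as an edge-transitive circulant, read off its valency from an edge-counting argument, and then exclude every valency below $12$ by feeding $\G/\B$ into the small-valency circulant results and contradicting $\core_G(C)=1$.

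First I would pin down the blocks. As $\B$ is non-cyclic with blocks of size $2$, Lemma~\ref{KR2} shows that $C$ is faithful and regular on $\B$ and that each block is a union of two $C_1$-orbits lying in different $\rho$-orbits; since $C=\la\rho\ra$ permutes the blocks regularly, we may index them as $B_i=\{u_i,v_{t+i}\}$ for a fixed $t\in\Z_n$. Thus $\bar\rho$ is a regular $n$-cycle on $\B$ and $\G/\B$ is a circulant on $\Z_n$. Because $G$ is edge-transitive it sends internal edges (both ends in one block) to internal edges, and there are at most $|\B|=n$ of these against $6n$ edges in all, so there are none; hence $t\notin\{0,a,b,c\}$ and every edge descends. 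Tracking rim, hub and spoke edges, the connection multiset of $\G/\B$ is
$$
M=\{\pm1\}\cup\{\pm k\}\cup\{\pm t,\pm(t-a),\pm(t-b),\pm(t-c)\}.
$$
The map $E(\G)\to E(\G/\B)$ is a $G$-map onto the $\bar G$-set $E(\G/\B)$; as $G$ is transitive on $E(\G)$ its image is a single $\bar G$-orbit, so $\G/\B$ is $\bar G$-edge-transitive and all fibres share one size $r$. Comparing $|E(\G)|=6n$ with $|E(\G/\B)|=\kappa' n/2$ gives $r\kappa'=12$, where $\kappa'$ is the valency of $\G/\B$; so $\kappa'\in\{1,2,3,4,6,12\}$ and it remains to exclude $\kappa'\le6$.

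So suppose $\kappa'\le6$. Since $\G$ is connected, regular and $G$-edge-transitive, and since the two $\rho$-orbits $\{u_i\}$ and $\{v_i\}$ are not a bipartition (rim edges lie inside $\{u_i\}$), $G$ is vertex-transitive; hence $G_{\{B_0\}}$ is transitive on $B_0$, i.e.\ some $g\in G$ swaps $u_0$ and $v_t$. Using this swap together with vertex- and edge-transitivity one checks that $\G/\B$ is a connected arc-transitive circulant of valency $\kappa'\in\{2,3,4,6\}$ on $n>50$ vertices carrying the regular cyclic group $\bar C\cong\Z_n$. I would then apply the structure theory branch by branch: $\kappa'=2$ gives a cycle; $\kappa'\in\{3,4\}$ is governed by Lemma~\ref{BFSX}; and $\kappa'=6$ by Lemma~\ref{6}. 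In each case the finitely many small exceptional graphs have order at most $14$ and are killed by $n>50$, so the surviving conclusion always supplies a nontrivial normal subgroup of $\bar G$ meeting $\bar C$ nontrivially — either $\bar C$ itself, a characteristic cyclic subgroup such as $\bar C_{n/2}$ or $\bar C_{n/4}$, or (in the $\kappa'=6$ branch) a normal $3$-subgroup containing $\bar C_3$.

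The last step — lifting this back to $G$ and contradicting $\core_G(C)=1$ — is where the real difficulty lies, and it is the main obstacle. Writing $K$ for the kernel of the action of $G$ on $\B$, Lemma~\ref{KR2} gives $C\cap K=1$, and $K$ is an elementary abelian $2$-group because each of its elements either fixes or transposes every size-$2$ block. The normal subgroup of $\bar G$ found above pulls back to a normal subgroup of the form $C_mK\lhd G$, but this does \emph{not} immediately isolate a nontrivial subgroup of $C$ alone that is normal in $G$, since $C$ need not be normal in $C_mK$. I would overcome this by converting the block system produced on $\B$ into a $G$-block system of the form $\orb(C_{n/2},V(\G))$ and invoking Corollary~\ref{cor-family} to conclude $C_{n/2}\lhd G$ — the desired contradiction; extracting from $C_mK$ a genuine normal subgroup of $G$ lying inside $C$, possibly by iterating the block-system machinery of Lemmas~\ref{KR1} and~\ref{KR2}, is the delicate part and is exactly where the core-free hypothesis is used in earnest.
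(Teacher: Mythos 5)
You correctly set up the quotient, and your edge count $r\kappa'=12$ with $\kappa'\in\{3,4,6,12\}$ matches the paper's target; the overall plan (feed $\G/\B$ into Lemma~\ref{BFSX} and Lemma~\ref{6}, then contradict $\core_G(C)=1$) is also the paper's. But the step you yourself flag as ``the delicate part'' is a genuine gap, and your proposed repair does not work. The paper's proof hinges on a device you are missing: the involution $\tau=\prod_{B\in\B}(u_B\,v_B)$ swapping the two vertices in every block. Since each block contains one $u$- and one $v$-vertex and $G$ preserves $\B$, $\tau$ \emph{centralises} $G$, and one passes to the auxiliary graph $\G'$ with $E(\G')=E(\G)\cup E(\G)^\tau$. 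This buys two things you cannot get otherwise. First, $\B$ becomes a normal block system for $\la G,\tau\ra$, so $\G'$ is a genuine normal $r$-cover of $\G'/\B=\G/\B$ and the vertex-level cover index is available; in particular the case $\kappa'=4$ (three $\G$-edges between adjacent blocks) dies immediately by a $\tau$-parity count, whereas in your scheme it survives into Lemma~\ref{BFSX} with no visible contradiction. Second, and decisively, in the $\kappa'=6$ branch the normal-cover condition forces the kernel $K$ of $G$ on $\B$ to be exactly $\la\tau\ra$, so $KC$ is \emph{abelian} and $\la x^2:x\in KC\ra$ is a non-trivial characteristic subgroup of $KC$ lying inside $C$, contradicting $\core_G(C)=1$. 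Without this, your pullback $C_mK\lhd G$ is only an extension of an elementary abelian $2$-group (a priori of unbounded rank, on which $C$ need not act trivially) by a cyclic group, and no characteristic subgroup inside $C$ can be extracted.

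Your fallback, converting the situation into a block system of the form $\orb(C_{n/2},V(\G))$ and invoking Corollary~\ref{cor-family}, is inapplicable: every block system you obtain from the analysis of $\G/\B$ has blocks that are unions of $\B$-blocks, hence meet both $\rho$-orbits, so it is non-cyclic and never equals $\orb(C_{n/2},V(\G))$. Corollary~\ref{cor-family} is about cyclic block systems only. So the contradiction you need in the branches $\bar N=\bar C$, $\bar N=\bar C_{n/4}$ and $\bar N\cong\Z_3^{\ell}$ is not actually reached; you would need to rediscover the $\tau$-construction (or an equivalent way of bounding $K$ and making $KC$ abelian) to close the argument.
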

\begin{proof} 
Let $K$ be the kernel of the action of $G$ on $\B$, and for a subgroup $X \le G$, 
denote by $\bar{X}$ the image of $X$ induced by its action on $\B$. 
For a block $B \in \B$, we write $B=\{u_B,v_B\}$, where 
$u_B \in \{u_i : i \in \Z_n\}$ and by $v_B \in \{v_i : i \in \Z_n\}$, and define the permutation $\tau$ of $V(\G)$ as  
\begin{equation}\label{eq:tau}
\tau:=\prod_{B \in \B} (u_B\, v_B).
\end{equation}
Observe that $\tau$ commutes with any element of $G$. 

Now define the graph $\G'$ as 
\begin{equation}\label{eq:G'}
V(\G'):=V(\G)~\text{and}~E(\G'):=\{ \{u_0,u_1\}^x : x \in \la G, \tau \ra \}  
\end{equation}
Then $E(\G)=\{ \{u_0,u_1\}^x : x \in G\} \subseteq E(\G')$. 
Also, $\la \tau, G \ra \le \aut(\G')$, hence $\G'$ is both vertex- and edge-transitive. 
Since $\tau$ commutes with every element of $G$, it follows that 
$E(\G')=E(\G) \cup E(\G)^\tau$ and $E(\G)=E(\G)^\tau$ 
or $E(\G) \cap E(\G)^\tau=\emptyset$. Notice that   
$$
\G/\B=\G'/\B,
$$
hence we are done if show that $\G'/\B$ has valency $12$. 

Denote by $d$ and $d'$ the valency of $\G'$ and 
$\G'/\B$, respectively.
Now, $d=|E(\G')|/n=6 \frac{|E(\G')|}{|E(\G)|}$. This shows that  
$d=6$ if $E(\G)=E(\G')$, and $d=12$ otherwise. 

Assume for the moment that $d=6$, i.e., $E(\G)=E(\G')$. 
In this case $\tau \in K$, hence $\B$ is normal and $\G'$ is a normal $r$-cover 
of $\G'/\B$ and $r=1$ or $r=2$. 

If $r=2$, then $d'=3$. As $n > 50$, this is impossible due to Lemma~\ref{BFSX}.
Here we use the facts that $\G'/\B$ is edge-transitive and $\bar{C}$ is regular on 
$V(\G'/\B)$. It is well-known that an edge-transitive circulant graph is also 
arc-transitive. Thus $r=1$ and $d'=6$. 
As $n > 50$, Lemma~\ref{6} can be applied to $\G'/\B$ and 
$\bar{C}$. This says that $\aut(\G'/\B)$ has a normal subgroup $N$ such 
that 
\begin{enumerate}[{\rm (1)}]
\item $N=\bar{C}$, or 
\item $n \equiv 4 \!\!\pmod 8$ and $N=\bar{C}_{n/4}$, or   
\item $N \cong \Z_3^\ell$ for $\ell \ge 2$ and $\bar{C}_3 \le N$. 
\end{enumerate}

In case (1), $N < \bar{G}$, hence $KC \lhd G$. 
The condition $r=1$ yields that $K=\la \tau \ra$. Thus $KC$ is abelian and 
$\la x^2 : x \in KC\ra=C_n$ if $n$ is odd and $C_{n/2}$ if $n$ is even. 
Using that the latter group is characteristic in $KC$ and $KC \lhd G$, we obtain that 
$\core_G(C) \ne 1$, a contradiction.

In case (2), $N < \bar{G}$, hence $KC_{n/4} \lhd G$. 
Since $KC$ is abelian, it follows that $C_{n/4}$ is characteristic in $KC$, implying that 
$C_{n/4} \lhd G$,  a contradiction.

In case (3), $\bar{C}_3 \le \bar{G} \cap N$.  Since 
$\bar{G} \cap N \lhd \bar{G}$, it follows that $G$ contains 
a normal subgroup $M$ such that $M=\la \tau \ra \times S$, where 
$S \cong \Z_3^{\ell'}$ for some $\ell' \ge 1$. Thus $S$ is normal in $G$, and 
we obtain that $\orb(S,V(\G))$ is a non-trivial cyclic block system for $G$. 
This contradicts  Lemma~\ref{imp}, and we conclude that $d=12$. 

The graph $\G'$ is a normal $r$-cover of $\G'/\B$, where $r=1$ or $r=2$, and 
we have $d'=d/r=12/r$.  
If $r=2$, then $u_0$ is adjacent with $6$ vertices that are contained in the set 
$\{u_i : i \in \Z_n\}$. It follows 
from the definition of $\G'$ that this impossible. Thus $r=1$, and so $d'=12$.  
\end{proof}

\begin{lem}\label{2+}
Assuming Hypothesis~\ref{hypo},  
suppose that $n > 50$ and $\B$ is a non-cyclic block system for $G$ with blocks of size $2$. Then there is a normal non-cyclic block system for $G$ with blocks of size $4$. 
\end{lem}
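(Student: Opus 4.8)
The plan is to pass to the valency-$12$ quotient and reduce everything to a single normality statement there. By Lemma~\ref{2}, $\Sigma:=\G/\B$ has valency $12$, and since the image $\bar{C}=\la\bar\rho\ra$ of $C$ on $\B$ is regular of order $n$ and $\G$ is edge-transitive, $\Sigma$ is a connected arc-transitive circulant whose automorphism group contains the image $\bar{G}$ of $G$. First I would record that $n$ is even: were $n$ odd, $\G$ would be an edge-transitive graph in $\F(6)$ of order twice an odd number, hence the complement of the Petersen graph by \cite{KR}, contradicting $n>50$. By Lemma~\ref{KR2} the group $C$ is regular on $\B$ and, $\B$ being non-cyclic, its blocks are the pairs $B_i=\{u_i,v_{i+j}\}$ ($i\in\Z_n$) for a fixed $j$. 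Writing $\bar{C}_2:=\la\bar\rho^{\,n/2}\ra$ (of order $2$ as $n$ is even), I claim the lemma reduces to the single assertion $\bar{C}_2\lhd\bar{G}$.

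Granting $\bar{C}_2\lhd\bar{G}$, let $K$ be the kernel of the action of $G$ on $\B$ and put $N:=C_2K$, the preimage of $\bar{C}_2$ in $G$; then $N\lhd G$ because $K\lhd G$ and $\bar{C}_2\lhd\bar{G}$. Core-freeness forces $K\neq1$, since $K=1$ would give $C_2\cong\bar{C}_2\lhd G$, contradicting $\core_G(C)=1$. As each block has size $2$, $K$ embeds into $\prod_{B\in\B}\sym(B)\cong\Z_2^{\,n}$; because $K\lhd G$ while $C$ permutes the blocks in a single cycle by conjugation, the set of blocks on which $K$ acts non-trivially is invariant under this cyclic shift and non-empty, hence all of $\B$. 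Thus some element of $K$ interchanges the two vertices of each $B_i$, and the $N$-orbit of $u_0$ contains $u_{n/2}$ (via $\rho^{\,n/2}$) together with $v_j$ and $v_{j+n/2}$ (via $K$). Consequently the $N$-orbits are exactly the sets $B_i\cup B_{i+n/2}$ of size $4$; they contain vertices of both types, so the resulting block system is non-cyclic, as required.

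It remains to prove $\bar{C}_2\lhd\bar{G}$, which I would extract from Theorem~\ref{K} applied to $\Sigma$ with the regular cyclic group $\bar{C}$ of order $n>50$ and valency $\kappa=12$. Case (a) is impossible. In case (b), $\bar{C}\lhd\aut(\Sigma)$, so $\bar{C}_2$ is characteristic in $\bar{C}$ and hence normal in $\aut(\Sigma)$, a fortiori in $\bar{G}$. In case (c) with $d=2$ one has $\Sigma\cong\Sigma_0[\overline{K}_2]$ for $\Sigma_0:=\Sigma/\orb(\bar{C}_2,V(\Sigma))$ and $\aut(\Sigma)\cong\aut(\Sigma_0)\wr S_2$; here $\bar\rho^{\,n/2}$ is the central ``all-swap'' element of the base group, so again $\bar{C}_2\lhd\aut(\Sigma)$. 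In case (d) the only surviving value is $d=7$ (the values $d=5,13$ being excluded because $\Sigma/\B_1$ would be an arc-transitive circulant of valency $3$ of order $>6$, impossible by Lemma~\ref{BFSX}, or a single edge forcing $n=26$); then $\Sigma/\B_1$ is a cycle, $\bar{C}_2$ is the centre of the dihedral direct factor $G_2$ of $\aut(\Sigma)$ in \eqref{eq:aut}, hence central in $\aut(\Sigma)$ and normal in $\bar{G}$. The genuinely delicate possibilities are case (c) with $d\in\{3,4,6,12\}$, where $\Sigma\cong\Sigma_0[\overline{K}_d]$ forces the connection set of $\Sigma$ to be a union of cosets of $\bar{C}_d$ of size $d\ge3$; the values $d=4,12$ are eliminated by the order bound of Lemma~\ref{BFSX} on the valency-$(12/d)$ base $\Sigma_0$, while $d=3,6$ are to be ruled out by combining this coset condition (which the rim class $\{\pm1\}$ and hub class $\{\pm k\}$ of $\G/\B$ resist) with core-freeness and Corollary~\ref{cor-family}.

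The main obstacle is exactly this last point: disposing of the lexicographic cases $d=3$ and $d=6$, where the antipodal involution of $\Sigma$ need not be normalised by the full group $\aut(\Sigma)$. I expect these to be excluded rather than handled directly — the structure $\Sigma\cong\Sigma_0[\overline{K}_d]$ imposes strong divisibility and coset conditions on the parameters $a,b,c,k$ and $j$, and these should collide with $\core_G(C)=1$ via Lemma~\ref{imp} (no cyclic block system of $\G$) and Corollary~\ref{cor-family}. Once the favourable cases (b), (c) with $d=2$, and (d) are in hand and the degenerate lexicographic cases are disposed of, $\bar{C}_2\lhd\bar{G}$ holds in every case, and the construction above yields the desired normal non-cyclic block system with blocks of size $4$.
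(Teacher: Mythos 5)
There is a genuine gap, and it sits at the very foundation of your plan. You never use the actual content of Lemma~\ref{2}: the fact that $\G/\B$ has valency $12$ forces the kernel $K$ of the action of $G$ on $\B$ to be \emph{trivial} (between two adjacent blocks there is exactly one edge, so no element of $K$ can swap the two vertices of any block). This is the first line of the paper's proof, and it destroys your reduction in two ways. First, the statement you reduce everything to, $\bar{C}_2\lhd\bar{G}$, is in fact \emph{false} in this situation: since $K=1$, the map $G\to\bar{G}$ is an isomorphism, so $\bar{C}_2\lhd\bar{G}$ would give $C_2\lhd G$, contradicting $\core_G(C)=1$. (You notice this implication yourself, but you resolve it by concluding $K\ne 1$, which is exactly backwards.) Second, your construction of the size-$4$ blocks needs an element of $K$ interchanging the two vertices of every block; no such element exists. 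Unsurprisingly, the individual steps of your ``proof'' of $\bar{C}_2\lhd\bar{G}$ are also unsound: case (b) of Theorem~\ref{K} simply cannot occur (it would give $C\lhd G$), and in case (c) with $d=2$ the identity $\aut(\Sigma)=\aut(\Sigma_0)\wr S_2$ is not justified — $\aut$ of a lexicographic product can be strictly larger than the wreath product, and here it must be, precisely because $\bar{C}_2$ is not normal.

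The correct key step is different: with $K=1$, in case (c) with $d=2$ one takes $N\le G$ to be the (isomorphic) preimage of the kernel of the action of $\bar{G}$ on $\orb(\bar{C}_2,V(\Sigma))$. Then $N\lhd G$, $C_2\le N$, and $\orb(N,V(\G))$ has blocks of size $2$ or $4$ contained in unions of two $\B$-blocks; the size-$2$ (cyclic) alternative is excluded because it would force $N\le C$ and hence $N\le\core_G(C)=1$, contradicting $C_2\le N$. So the blocks have size $4$ and are non-cyclic, which is the desired conclusion — note it arrives without ever producing a normal subgroup of $C$. The remaining cases of Theorem~\ref{K} must then all be \emph{eliminated}, not shown to yield $\bar{C}_2\lhd\bar{G}$. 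Here your case analysis is also incomplete: in family (d) you omit $d=4$ (where $d-1=3$ divides $12$; the paper kills it via Lemma~\ref{BFSX} applied to the valency-$4$ quotient of odd order), and the cases $d=3$ and $d=6$ of family (c), which you leave as ``expected to be excluded'', genuinely require work — a Sylow $3$-subgroup argument producing a forbidden cyclic block system for $d=3$, and a finite computation over six candidate local graphs for $d=6$.
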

\begin{proof}
Let $K$ be the kernel of the action of $G$ on $\B$, and for a subgroup $X \le G$, 
denote by $\bar{X}$ the image of $X$ induced by its action on $\B$. 
For the sake of simplicity we write $\bar{\G}$ for $\G/\B$.

By Lemma~\ref{2}, $\bar{\G}$ has valency $12$. 
This implies that $K=1$. 
As $\bar{C} \le \aut(\bar{\G})$ and it is regular on $V(\bar{\G})$, 
Theorem~\ref{K} can be applied to $\bar{\G}$ and $\bar{C}$. 
As $n > 50$, $\bar{\G}$ cannot be the complete graph. 
Also, if $\bar{C} \lhd 
\aut(\bar{\G})$, then $C \lhd G$ because $K=1$. 
This is also impossible, hence $\bar{\G}$ is in one of the families (c) and (d) of 
Theorem~\ref{K}.
\medskip

\noindent{\it Case 1.} $\bar{\G}$ is in family~(c). 
\medskip

In this case $\orb(\bar{C}_d,V(\bar{\G}))$ is a block system for $\aut(\bar{\G})$, hence for $\bar{G}$ as well, where $d \in \{2, 3, 4, 6\}$. Let $N$ be the unique 
subgroup of $G$ for which $\bar{N}$ is the kernel of the action of 
$\bar{G}$ on $\orb(\bar{C}_d,V(\bar{\G}))$. Note that $N \lhd G$ and  
$N \cong \bar{N}$ because $K=1$. 
Let $\B'=\orb(N,V(\G))$. It follows that $\B'$ is non-cyclic and it has blocks of size $2d$. 

Let $d=2$. Then $\B'$ is normal with blocks of size $4$, so the conclusion of the lemma holds. 

Let $d=3$. Then the Sylow $3$-subgroup of 
$\bar{N}$ is normal in $\bar{G}$. It follows in turn that, 
the Sylow $3$-subgroup of $N$ is normal in $G$, the orbits of the latter subgroup form 
a non-trivial cyclic block system for $G$. This contradicts Lemma~\ref{imp}.

Let $d=4$. Then $\bar{\G}$ has valency $3$. It follows from 
Lemma~\ref{BFSX} that $n \le 6$, but this is excluded. 

Finally, let $d=6$. Let $\tau$ be the permutation of $V(\G)$ defined in \eqref{eq:tau} and 
$\G'$ be the graph defined in \eqref{eq:G'}. 
Let $\Delta$ be the subgraph of $\G'$ induced by the set $u_0^N \cup u_1^N$. 
It is not hard to show that $\Delta$ is a bipartite graph, it has valency $6$, and 
it is also edge-transitive. Moreover, if $B \in \B$ such that $u_0 \notin B$ and 
$B \subset u_0^N \cup u_1^N$, then  
\begin{equation}\label{eq:cap}
| \Delta(u_0) \cap B|=1.
\end{equation}
Since $\la \tau \ra \times C_6 \le \aut(\Delta)$, it follows that 
$\Delta$ is uniquely determined by $\Delta(u_0)$.  It follows from the 
definition of $\G'$ that $|\G'(u_0) \cap \{u_i : i \in \Z_n\}|=4$. Therefore, replacing $u_0^N \cup u_1^N$ with $u_0^N \cup  u_{n-1}^N$ if necessary, we may assume w.l.o.g.~that $|\Delta(u_0) \cap \{u_i : i \in \Z_n\}| \le 2$. This together with 
\eqref{eq:cap} show that 
there are $6$ possibilities for $\Delta$. A computation with 
{\sc Magma}~\cite{BCP} shows that none of these $6$ graphs is edge-transitive. 
\medskip 

\noindent{\it Case 2.} $\bar{\G}$ is in family~(d). 
\medskip

We finish the proof by showing this case does not occur.
Theorem~\ref{K} shows that $\B_1:=\orb(\bar{C}_d,V(\bar{\G}))$ and 
$\B_2:=\orb(\bar{C}_{n/d},,V(\bar{\G}))$ are blocks for 
$\bar{G}$ for some divisor $d$ of $n$ such that 
$d \in \{4, 5, 7\}$ and $\gcd(d,n/d)=1$. Furthermore, 
$$
\bar{C}_d \times \bar{C}_{n/d} < 
\bar{G} \le \aut(\G')=G_1 \times G_2,
$$
where $\bar{C}_d \le G_1$, $G_1 \cong S_d$,   
$\bar{C}_{n/d} < G_2$ and $G_2 \cong \aut(\G'/\B_1)$.    

Let $d=4$. Then $\bar{\G}/\B_1$ has valency $4$. Using also that $n/d$ is odd and 
that $n > 20$, it follows 
from Lemma~\ref{BFSX} that $\bar{C}_{n/d} \lhd \bar{G}$, hence  
$C_{n/d} \lhd G$, a contradiction.

Let $d=5$. Then $\bar{\G}/\B_1$ has valency $3$, hence $n \le 30$ 
by Lemma~\ref{BFSX}, which is excluded.

Finally, let $d=7$. Then $\bar{\G}/\B_1$ is a cycle of length $n/7$, implying that 
$\bar{C}_{n/d} \lhd \bar{G}$, so $C_{n/d} \lhd G$, a contradiction. 
\end{proof}

Before the proof of Theorem~\ref{main+} we need two more lemmas dealing with 
non-cyclic block systems with blocks of size at least $4$.

\begin{lem}\label{>2}
Assuming Hypothesis~\ref{hypo}, suppose that $n > 50$ and  
$\B$ is a minimal non-cyclic block system for $G$ with blocks of size at least $4$, and 
let $B \in \B$ be any block. Then the permutation group of $B$ induced by 
$G_{\{B\}}$ is an affine group.
\end{lem}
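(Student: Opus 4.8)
The plan is to pin down the induced group $K:=G_{\{B\}}^B$ on the block through Müller's classification and then to exclude its almost simple possibilities. First I would record the structural input of Lemma~\ref{KR2}: as $\B$ is non-cyclic with blocks of size $d$, the integer $d$ is even, $C_{d/2}$ lies in the kernel of the $C$-action on $\B$, and each block $B$ splits as a disjoint union of two $C_{d/2}$-orbits, namely $B_u=B\cap\{u_i:i\in\Z_n\}$ and $B_v=B\cap\{v_i:i\in\Z_n\}$, each of size $d/2$. A cyclic group of order $d/2$ with an orbit of size $d/2$ acts regularly on it, so $C_{d/2}$ is faithful and semiregular on $B$; hence its image $\bar{C}_{d/2}\le K$ is a semiregular cyclic subgroup of order $d/2$ with exactly two orbits of the same length. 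Since $\rho\in G$ has orbits $\{u_i\}$ and $\{v_i\}$, and an edge-transitive graph that is not vertex-transitive would be bipartite with parts equal to these orbits (impossible, as $\{u_i\}$ carries the rim cycle), $G$ is transitive on $V(\G)$; thus $G_{\{B\}}$ is transitive on $B$ and $K$ is transitive. Because $\B$ is minimal, no non-trivial $G$-block lies properly inside a block of $\B$, which is exactly the condition that $K$ act \emph{primitively} on $B$.

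With $K$ primitive of degree $d\ge 4$ containing a semiregular cyclic subgroup with two equal orbits, Theorem~\ref{M} applies, its parameter being $d/2$: either $K$ is in the affine case (1), which is the desired conclusion, or $K$ is almost simple as in case (2). It remains to rule out the latter. By Lemma~\ref{M-p2&3} every group in families (a) and (c)--(f) is $2$-transitive, while family (b) has rank $3$ with subdegrees $1,3,6$. The extra leverage I would use is that, since $N\lhd G$ with $\orb(N,V(\G))=\B$ is intransitive and $\G$ is $G$-edge-transitive, $\G$ is a normal $r$-cover of $\G/\B$; in particular $\G$ has no edges inside any block, so every neighbour of a vertex of $B$ lies outside $B$.

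To derive a contradiction I would reuse the common-neighbour device from the proof of Lemma~\ref{help}: the map $(x,y)\mapsto|\G(x)\cap\G(y)|$ on ordered pairs of distinct vertices of $B$ is $G_{\{B\}}$-invariant, hence constant on $K$-orbits of pairs. If $K$ were $2$-transitive this count would be the same for every pair; if $K$ were in family (b), it would take at most two values, with the associated $K$-invariant graphs having valencies in $\{0,3,6,9\}$. Writing out the rim, hub and spoke neighbours explicitly, and using $n>50$ (so that two vertices of the same $C_{d/2}$-orbit are separated by at least $2n/d$, a gap exceeding every spoke offset $0,a,b,c$), one finds that a pair inside $B_u$ has no common neighbour, whereas a suitably chosen $B_u$--$B_v$ pair, or a pair in $B_v$ joined through the hub, has a positive and different count. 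This non-constancy is incompatible with $2$-transitivity, and the accompanying short valency check rules out family (b) as well. Hence case (2) cannot occur, and $K$ is affine.

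The hard part will be this last step: carrying out the common-neighbour computation uniformly in the block size $d$ and in the parameters $a,b,c,k$, and in particular verifying that the counts genuinely \emph{differ} for at least one pair of pairs rather than vanishing identically. The two facts that make this possible are that each block is internally edgeless (from the $r$-cover structure) and that $n>50$ forces the intra-orbit separations to dominate the bounded spoke and hub offsets, which is precisely what obstructs the $2$-transitive and rank-$3$ behaviours and thereby forces $K$ into the affine family.
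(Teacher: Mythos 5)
Your first half matches the paper exactly: minimality of $\B$ makes $(G_{\{B\}})^*$ primitive, $C_{\{B\}}=C_{d/2}$ induces a semiregular cyclic subgroup with two equal orbits, and Theorem~\ref{M} reduces everything to excluding the almost simple families of part (2). The gap is in the exclusion step, which you replace by an intra-block common-neighbour count that is both uncarried-out and, as sketched, unworkable. First, your key quantitative claim --- that $n>50$ forces the intra-orbit gap $2n/d$ to exceed ``every spoke offset $0,a,b,c$'' --- is false: the parameters $a,b,c,k$ are arbitrary non-zero elements of $\Z_n$ and can be as large as $n-1$, so nothing prevents a non-zero multiple of $2n/d$ from lying in the difference set of $\{0,a,b,c\}$, and hence nothing guarantees that a pair inside $B_u$ has no common neighbour. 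Second, your proposed ``pair in $B_v$ joined through the hub'' cannot exist, by your own (correct) observation that blocks are internally edgeless in a normal $r$-cover. Third, even if one grants that $2$-transitivity of $(G_{\{B\}})^*$ forces $|\G(x)\cap\G(y)|$ to be a constant $s$ on pairs from $B$, the double count $\sum_{x\ne y\in B}|\G(x)\cap\G(y)|=\tfrac{6|B|}{r}\cdot r(r-1)$ gives $s=6(r-1)/(|B|-1)$, which eliminates $r\in\{2,3,6\}$ when $|B|\ge 6$ is even, but leaves the case $r=1$, $s=0$ completely open: a cover in which no two vertices of a block share a neighbour is not excluded by any computation you indicate. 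So the argument does not close.

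The paper's exclusion works with the kernel $K$ of $G$ on $\B$ rather than inside a single block: $K^*\lhd(G_{\{B\}})^*$ is again primitive (Corollary~\ref{M-p1}), $K$ is faithful on each block (else two adjacent blocks induce $K_{m,m}$ with $m\ge 6$), and its actions on the adjacent blocks $B\ni u_0$ and $B'\ni u_1$ are equivalent (the points-versus-hyperplanes alternative is killed by the subdegree bound $q^{d-2}<6$). Equivalence gives $K_{u_0}=K_v$ for some $v\in B'$ with $v\ne u_1$, and $2$-transitivity of $K$ on $B'$ then forces $u_0$ to be adjacent to all $|B'|-1\ge 5$ vertices of $B'\setminus\{v\}$, contradicting the fact that this number must divide $6$; the rank-$3$ family (b) is finished off with the order bound $|G|\le 48n$ and Lucchini's theorem. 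This inter-block stabiliser coincidence is the idea your proposal is missing, and without it (or a complete, correct version of the common-neighbour computation covering the $r=1$ case) the almost simple possibilities are not ruled out.
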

\begin{proof}
For a subgroup $X \le G_{\{B\}}$, denote by $X^*$ the image of  
$X$ induced by its action on $B$. 
As $B$ is minimal, $(G_{\{B\}})^*$ is a primitive permutation group. Also, 
$(C_{\{B\}})^*$ is a semiregular cyclic subgroup of $(G_{\{B\}})^*$ with $2$ orbits, hence Theorem~\ref{M} can be applied to $(G_{\{B\}})^*$. 
This shows that $(G_{\{B\}})^*$ is either an affine group or it is one of the groups in the families (a)-(f) in part (2) of Theorem~\ref{M}. 
Assume that the latter case occurs. We drive in three steps that this 
leads to a contradiction. Let $K$ be the kernel of the action of $G$ on $\B$. 
\medskip

\noindent
{\it Step~1. $K$ acts faithfully on every block in $\B$.} 
\medskip

Since $K \lhd G_{\{B\}}$, it follows that 
$K^* \lhd (G_{\{B\}})^*$. By Corollary~\ref{M-p1}, $K^*$ is primitive and belongs to 
the same family as $(G_{\{B\}})^*$.  

Assume on the contrary that $K$ is not faithful on every block.
Using the connectedness of $\G$, it is easy to show that there are blocks $B, B'$ 
in $\B$ with the following properties: The kernel of the action of $K$ on $B$ is non-trivial on $B'$, and $\G$ has an edge $\{w,w'\}$ such that $w \in B$ and $w' \in B'$. Denote by $N$ the latter kernel. Now as $N \lhd K$ and 
$K$ is primitive on $B'$, $N$ is transitive on $B'$. Thus the orbit $(w')^N=B'$, 
and so $w$ is adjacent with any vertex in $B'$. 
Since $\B$ is normal, it follows that the subgraph of $\G$ induced by 
$B \cup B'$ is isomorphic to the complete bipartite graph $K_{m,m}$, where 
$m=|B|$. On the other hand, $m \ge 6$, showing that $\G \cong K_{6,6}$, a 
contradiction. 
\medskip

Denote by $B$ and $B'$ the blocks containing $u_0$ and $u_1$, respectively. 
\medskip

\noindent
{\it Step~2.} The action of $K$ on $B$ is equivalent with its action on $B'$. 
\medskip

Assume on the contrary that the actions are inequivalent. 
Due to Lemma~\ref{M-p2&3}(2), $K$ belongs to family (c) in  
Theorem~\ref{M}(2) with $d \ge 4$, and the elements in $B$ and $B'$ 
correspond to the points and the hyperplanes of the projective geometry 
$\PG(d-1,q)$, respectively. The set $B'$ splits into two $K_{u_0}$-orbits 
of lengths  
$$
(q^{d-1}-1)/(q-1)~\text{and}~q(q^{d-1}-1)/(q-1).
$$  
The first orbit consists of the hyperplanes of $\PG(d-1,q)$ 
through the point represented by $u_0$,   
and the second orbit consists of the remaining hyperplanes. 
Clearly, the minimum of these numbers is bounded above by the valency of $\G$, 
implying that $q^{d-1}-1 \le 6(q-1)$, and hence $q^{d-2} < 6$. This is impossible because $d \ge 4$. 
\medskip

\noindent
{\it Step~3.} $\core_G(C) \ne 1$. 
\medskip
  
Since $K$ acts equivalently on $B$ and $B'$, it follows that $K_{u_0}=K_v$ for some vertex $v \in B'$ (see \cite[Lemma~1.6B]{DM}). 
Define the binary relation $\sim$ on $V(\G)$ by letting $u \sim v$ if and only if 
$K_u=K_v$. It is not hard to show, using that $K \lhd G$, that $\sim$ is 
a $G$-congruence (see \cite[Exercise~1.5.4]{DM}), and so there is a block for $G$ containing $u_0$ and $v$. Also, as $K$ is not regular, this block is non-trivial, and this shows that $v \ne u_1$.  

By Lemma~\ref{M-p2&3}(1), $K$ is $2$-transitive on $B'$, unless  
$|B'|=10$, $K=A_5$ or $S_5$, and it has subdegrees $1, 3$ and $6$. 

Assume first that $K$ is $2$-transitive on $B'$.
Then the orbit $u_1^{K_{u_0}}=u_1^{K_v}=|B'|-1$ and each vertex in $u_1^{K_v}$ is adjacent with $u_0$. Hence $u_0$ has $|B'|-1$ neighbours in $B'$. On the other hand, as $\B$ is normal, this number divides $6$, so $|B|=|B'|=4$, contradicting that 
$G_{\{B\}}^*$ is an almost simple group.

We are left with the case that $|B'|=10$,  
$K=A_5$ or $S_5$, and it has subdegrees $1, 3$ and $6$. 
Consequently, $u_0$ has $3$ or $6$ neighbours in $B'$. 

If $u_0$ has $6$ neighbours, then it is clear that $n=10$, which is excluded. 

Now assume that $u_0$ has $3$ neighbours in $B'$. 
In this case $\G$ is a normal $3$-cover of a cycle of length $n/5$. 
Since $\G/\B$ is a cycle of length $n/5$, it follows that 
$|G| \le |K| \cdot 2n/5 =48 n$. Using that $n > 50$, 
Theorem~\ref{L} shows that $\core_G(C) \ne 1$.
\end{proof}

\begin{lem}\label{>2+}
Assuming Hypothesis~\ref{hypo}, suppose that $n > 50$ and 
$\B$ is a minimal non-cyclic block system for $G$ with blocks of size at least $4$.  
Then the blocks have size $4$.
\end{lem}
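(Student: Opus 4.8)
The plan is to combine Lemma~\ref{>2} with Theorem~\ref{M}. Since $G_{\{B\}}^*$ is affine, part~(1) of Theorem~\ref{M} forces $d:=|B|\in\{4,8,16\}$, so it remains to exclude $d=8$ and $d=16$. First I would record the ambient structure. As $d>2$ and $\B$ is minimal, Lemma~\ref{KR2}(3) shows that $\B$ is normal, hence $\G$ is a normal $r$-cover of $\G/\B$ with $r\mid 6$ and with no edges inside blocks; writing $K$ for the kernel of the action of $G$ on $\B$, the group $K$ is faithful on every block, since otherwise (as in Step~1 of Lemma~\ref{>2}) some vertex would be joined to a whole block and so have at least $d\ge 8>6$ neighbours. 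By Lemma~\ref{KR2}(2) the group $\bar C$ is regular on $\B$, so $\G/\B$ is a connected arc-transitive circulant of valency $6/r$ on $N':=2n/d>6$ vertices. Valency $1$ would give $N'=2$ and valency $3$ would give $N'\in\{4,6\}$ by Lemma~\ref{BFSX}, both impossible; thus $r\in\{1,3\}$.

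The key input is a fixed-point computation inside a block. Write $G_{\{B\}}^*=\Z_2^m\rtimes G_0$ with $m\in\{3,4\}$; primitivity of $G_{\{B\}}^*$ (from minimality of $\B$) means precisely that $G_0$ is irreducible on $\Z_2^m$. Then $K^*=\Z_2^m\rtimes H$ with $H\lhd G_0$, and $H\ne 1$, because $(C_{d/2})^*\le K^*$ is cyclic of order $d/2\ge 4$ while $\Z_2^m$ has exponent $2$. Since $H\lhd G_0$, the space $\mathrm{Fix}(H)$ of $H$-fixed vectors is $G_0$-invariant, hence equals $0$ or $\Z_2^m$ by irreducibility; the latter forces $H=1$, so $\mathrm{Fix}(H)=0$. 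A direct check then gives that, for $x\ne y$ in $B$, one has $K_x=K_y$ iff $y-x\in\mathrm{Fix}(H)$; consequently \emph{distinct points of a block have distinct $K$-stabilisers}.

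For $r=1$ I would use that the bipartite graph between two adjacent blocks is a perfect matching, hence a $K$-equivariant bijection. Applied to the rim edge $\{u_i,u_{i+1}\}$ this yields $K_{u_i}=K_{u_{i+1}}$ for all $i$, and running once around the rim gives $K_{u_0}=K_{u_1}=\cdots=K_{u_{N'}}$, where $u_{N'}=u_{2n/d}$ lies in the same block as $u_0$ but is different from it. This contradicts the last assertion of the previous paragraph, so $r=1$ cannot occur when $d\ge 8$.

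It remains to rule out $r=3$, which is where I expect the real work. Then $\G/\B$ is a cycle and $u_0$ has exactly three neighbours in the adjacent block $B_1$, so $K_{u_0}$, acting faithfully on $B_1$, has an invariant $3$-set. For $d=8$ this is already impossible: here $K_{u_0}\cong\GL(3,2)\cong\PSL(2,7)$, which has no orbit of length $2$ or $3$ on $8$ points, so no invariant $3$-set exists. For $d=16$ one has $K_{u_0}\cong H$ with $H\lhd G_0$ and, from the order-$8$ element $(C_8)^*$, an element of order $4$; I would then run through the admissible stabilisers $G_0$ listed in Theorem~\ref{M}(1)(d). For the large groups in that list the orbit structure leaves no room for an invariant $3$-set, while for the few small groups that remain the bound $|\bar G|\le 2N'$ (from $\bar G\le D_{N'}$), together with Lucchini's Theorem~\ref{L}, confines $n$ to a bounded range that can be cleared by a finite computation. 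The main obstacle is exactly this last step: isolating the small affine point-stabilisers admitting the required invariant $3$-set and showing that none of them is realised by an edge-transitive Nest graph with $n>50$.
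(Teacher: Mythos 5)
Your skeleton matches the paper's up to the case division: Lemma~\ref{>2} plus Theorem~\ref{M}(1) gives $|B|\in\{4,8,16\}$, $\B$ is normal by Lemma~\ref{KR2}(3), and the covering index $r$ is split into cases, with $r=2$ (and $r=6$) killed by Lemma~\ref{BFSX}. Your $r=1$ argument is valid, though longer than needed: the paper simply notes that for $r=1$ the kernel $K$ is semiregular on $V(\G)$ and transitive on each block, hence regular there, so $K^*$ is the translation subgroup $\Z_2^m$, which cannot contain the cyclic group $(C_{|B|/2})^*$ of order at least $4$. The genuine gap is the case $r=3$ with $|B|=16$, which you yourself flag as ``the main obstacle'': the plan you outline (invariant $3$-sets for the large stabilisers, then Lucchini plus a finite computation for the small ones) is not carried out, and the bound it would produce ($n^2<|G|\le|K|\cdot n/4$, so $n$ up to a few hundred) is far beyond the $n\le 50$ range the paper actually verifies by computer. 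The paper closes this case differently: (i) since $\G/\B$ is a cycle, $G_{u_0}$ induces on $\G(u_0)$ a group preserving two blocks of size $3$, so $G_{u_0}$ is a $\{2,3\}$-group; this already eliminates $|B|=8$ (where the point stabiliser is $\GL(3,2)$, of order divisible by $7$) and cuts the $|B|=16$ stabilisers down to $(\Z_3\times\Z_3)\rtimes\Z_4$ and $(S_3\times S_3)\rtimes\Z_2$; (ii) $K$ is shown to be faithful and, via a small {\sc Magma} check, primitive on $B$; (iii) the subgraph induced by $u_0^E\cup u_1^E$, where $E\cong\Z_2^4$ is the translation subgroup, is a disjoint union of four cubes $Q_3$, and the intersection of one component with $B$ is a nontrivial block of size $4$ for $K$ on $B$, contradicting (ii). Steps (ii) and (iii) have no counterpart in your sketch, and without them the case does not close.

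A secondary flaw: your justification of the faithfulness of $K$ on the blocks (``otherwise some vertex would be joined to a whole block'') tacitly assumes that a nontrivial normal subgroup of $K$ is transitive on an adjacent block, which requires $K$ to act \emph{primitively} there; in the affine situation you only know that $K^*$ is transitive. The correct argument, which the paper uses, is that the orbits of such a kernel on an adjacent block $B''$ all have the same length $2^s$, which must divide $|\G(w)\cap B''|=r$; since $r\in\{1,3\}$ is odd, $s=0$. The conclusion you want is therefore true, but not for the reason you give.
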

\begin{proof}
Let $K$ be the kernel of the action of $G$ on $\B$, and let $B \in \B$ be the block containing $u_0$. Denote by $(G_{\{B\}})^*$ the permutation group of $B$ induced 
by $G_{\{B\}}$. By Lemma~\ref{>2}, $(G_{\{B\}})^*$ is an affine group, and thus it is 
one of the groups in the families (a)--(d) in part (1) of Theorem~\ref{M}. 
In particular, $|B| \in \{4, 8, 16 \}$. 
Assume on the contrary that $|B| > 4$. 
By Lemma~\ref{KR2}, $\B$ is normal, hence 
$\G$ is a normal $r$-cover of $\G/\B$ for $r \in \{1,2,3\}$. 
\medskip

\noindent{\it Case~1.} $r=1$.
\medskip

In this case $K$ is regular on every block, in particular, 
$K \cong \Z_2^4$ or $\Z_2^5$. On the other hand, by Lemma~\ref{KR2}(2),  
$C_{|B|/2} < K$, a contradiction. 
\medskip

\noindent{\it Case~2.} $r=2$.
\medskip

In this case $\G/\B$ has valency $3$. 
It follows from Lemma~\ref{BFSX} that $n \le 48$, but this is excluded.
\medskip

\noindent{\it Case~3.} $r=3$.
\medskip

Then $\G/\B$ is a cycle of length $2n/|B|$. This implies that the action of 
$G_{u_0}$ on $\G(u_0)$  admits a block system consisting of two blocks of size $3$. 
Consequently, the restriction of $G_{u_0}$ to $\G(u_0)$ is a $\{2,3\}$-group. 
This together with the fact that $\G$ is connected yield that 
$G_{u_0}$ is also a $\{2,3\}$-group. Now checking  
the stabilisers in part (1) of Theorem~\ref{M}, we find that $|B|=16$ and 
\begin{equation}\label{eq:stab}
(G_{\{B\}})^*_{u_0} \cong (\Z_3 \times \Z_3) \rtimes \Z_4~\text{or}~
(S_3 \times S_3) \rtimes \Z_2 
\end{equation}
 
Assume for the moment $K$ is not faithful on $B$.  
Then there exist adjacent blocks $B'$ and $B''$ such that the apkernel 
of the action of $K$ on $B'$ is non-trivial on $B''$. Denote this kernel by $L$. 
The $L$-orbits contained in $B''$ have the same size, which is equal to $2^s$ for 
some $1 \le s \le 4$. On the other hand, for $w \in B'$, the set $\G(w) \cap B''$ 
is $L$-invariant, implying that $|\G(w) \cap B''|$ is equal to some power of $2$, a 
contradiction. Thus $K$ is faithful on $B$. 

The group $K$ contains a normal subgroup $E$ 
such that $E \cong \Z_2^4$. Note that $\B=\orb(E,V(\G))$. 
Let $P$ be the Sylow $3$-subgroup of $G_{\{B\}}$. 
Since $\G/\B$ is a cycle, it follows that $P \le K$. This also shows that $P \cong \Z_3^2$. 
Also, $C_8 \le K$, and in view of \eqref{eq:stab}, we obtain that 
$|(G_{\{B\}})^* : K| \le 2$ and if the index is equal to $2$, then 
$(G_{\{B\}})^*_{u_0} \cong (S_3 \times S_3) \rtimes \Z_2$. 
A direct check by {\sc Magma}~\cite{BCP} shows that in the latter case 
$(G_{\{B\}})^*$ has a unique subgroup of index $2$ containing an element of order 
$8$, which is also primitive. All these show that $K$ is primitive on $B$.

Denote by $\Delta$ be the subgraph of 
$\G$ induced by $u_0^E \cup u_1^E$. Using that $E \cong \Z_2^4$ acting 
regularly on both $u_0^E$ and $u_1^E$, it is not hard to show that 
$\Delta$ is the union of four 
$3$-dimensional cube $Q_3$. If $\Delta_1$ is a component of $\Delta$, then 
$|V(\Delta_1) \cap B|=4$ (note that $B=u_0^E$) and 
$V(\Delta_1) \cap B$ is a block for $K$.  This, however, contradicts the fact that $K$ is primitive on $B$.
\end{proof}

We are ready to settle Theorem~\ref{main+}, and therefore, 
Theorem~\ref{main} as well. 

\begin{proof}[Proof of Theorem~\ref{main+}.] 
In view of Lemma~\ref{small}, we may assume that $n > 50$. 
It follows from Lemmas~\ref{imp}--\ref{>2+} that $G$ admits a normal non-cyclic 
block system with blocks of size $4$. Denote this block system by $\B$.  
Let $K$ be the kernel of the action of $G$ on $\B$, and for a subgroup $X \le G$, 
denote by $\bar{X}$ the image of $X$ induced by its action on $\B$. 
As $\B$ is normal, $\G$ is a normal $r$-cover of 
$\G/\B$ for some $r \in \{1,2,3\}$. We exclude below all possibilities case-by-case.
\medskip

\noindent{\it Case~1.} $r=1$.
\medskip

In this case $|K|=4$ and $K \cap C=C_2$. 
If $K \cong \Z_4$, then $C_2$ is characteristic in $K$, and therefore, it is 
normal in $G$. This is impossible because $\core_G(C)=1$, hence 
$K \cong \Z_2^2$

The graph $\G/\B$ is edge-transitive, it has valency $6$, and 
$\bar{C}$ is regular on $V(\G/\B)$.  As $n > 50$, Lemma~\ref{6} can be applied to 
$\G/\B$ and $\bar{C}$. 
It follows that $\aut(\G/\B)$ has a normal subgroup $N$ such that 
\begin{enumerate}[{\rm (1)}]
\item $N=\bar{C}$, or 
\item $n \equiv 4 \!\!\pmod 8$ and $N=\bar{C}_{n/4}$, or 
\item $N \cong \Z_3^{\ell}$ for $\ell \ge 2$ and $\bar{C}_3 \le N$.
\end{enumerate}

In case (1), we obtain that $KC \lhd G$, whereas in case (2), $KC_{n/4} \lhd G$. 
In either case, $|KC:C|=2$, and therefore, for the derived subgroup 
$(KC)^\prime$, $(KC)^\prime \le C$. Thus $(KC)^\prime \le \core_G(C)$, and so 
$(KC)^\prime=1$, i.e., $KC$ is an abelian group. 
Then we obtain that $C_{n/2} \lhd G$ in case (1), 
and $C_{n/4} \lhd G$ in case (2). None of these is possible because $\core_G(C)=1$. 

In case (3), $G$ contains a normal subgroup $M$ such that $KC_3 \le M$ and 
$M/K \cong \Z_3^{\ell'}$ for some $\ell' \ge 1$. Then $M$ can be written as $M=KS$ where $C_3 \le S$ and $S \cong \Z_3^{\ell'}$. 
As $C_2 \le K$, we obtain that $C_3$ commutes with $K$, and so $C_3 \le O_3(M)$, 
where $O_3(M)$ denotes the largest normal $3$-subgroup of $M$.  As $O_3(M)$ is characteristic in $M$, $O_3(M) \lhd G$. This yields that $\orb(O_3(M),V(\G))$ is a non-trivial cyclic block system, a contradiction to Lemma~\ref{imp}. 
\medskip

\noindent{\it Case~2.} $r=2$.
\medskip

In this case $\G/\B$ has valency $3$, hence $n \le 12$ by Lemma~\ref{BFSX}, which is excluded.
\medskip

\noindent{\it Case~3.} $r=3$.
\medskip

The group $K$ is faithful on every block of $\B$. This can be shown by copying 
the argument that has been used in Case~3 in the proof of Lemma~\ref{>2+}.  
Since $\G/\B$ is a cycle of length $n/2$, it follows that 
$|G| \le |K| \cdot n =24 n$. 
Using that $n > 50$, Theorem~\ref{L} shows that $\core_G(C) \ne 1$, a 
contradiction.
\end{proof}

\end{document}